\documentclass[11pt]{amsart}
\usepackage{amsmath}
\usepackage{amsfonts}
\usepackage{amssymb}
\usepackage{amsthm}
\usepackage{indentfirst}
\usepackage{hyperref}
\usepackage{graphicx}
\usepackage{xcolor}

\usepackage[left=2.4cm,right=2.4cm,top=2.3cm,bottom=2.3cm]{geometry}
\title[Gel'fand's inverse problem under Ricci curvature bounds]{Gel'fand's inverse problem under Ricci curvature bounds}

\author{Shouhei Honda}
\address{Shouhei Honda: Graduate School of Mathematical Sciences, The University of Tokyo.}
\email{shouhei@ms.u-tokyo.ac.jp}

\author{Jinpeng Lu}
\address{Jinpeng Lu: Department of Mathematics and Statistics, University of Helsinki.}
\email{jinpeng.lu@helsinki.fi}

\theoremstyle{definition}
\newtheorem{definition}{Definition}[section]

\newtheorem{remark}[definition]{Remark}

\theoremstyle{plain}
\newtheorem{theorem}[definition]{Theorem}
\newtheorem{lemma}[definition]{Lemma} 
 
\newtheorem{proposition}[definition]{Proposition} 
\newtheorem{corollary}[definition]{Corollary}

\numberwithin{equation}{section}

\newcommand{\setR}{\mathbb{R}}

\newcommand{\di}{\mathop{}\!\mathrm{d}}

\DeclareMathOperator{\supp}{supp}

\newcommand{\Ch}{{\sf Ch}}

\newcommand{\dist}{\mathsf{d}}

\newcommand{\meas}{\mathfrak{m}}

\DeclareMathOperator{\RCD}{RCD}

\newfont{\tmpf}{cmsy10 scaled 2500}

\newcommand{\intav}{{\mathop{\int\kern-10pt\rotatebox{0}{\textbf{--}}}}}

\renewcommand{\ }{\text{ }}
\def\<{\langle}
\def\>{\rangle}

\newcommand{\R}{\setR}

\newcommand{\diam}{\mathop{\mathrm{diam}}}

\begin{document}
\maketitle

\vspace{-2mm}

\begin{abstract}

The classical Gel'fand's inverse problem asks whether a Riemannian manifold is uniquely determined by the knowledge of the heat kernel on any open subset of the manifold. We study this inverse problem in the non-smooth setting in the framework of $\RCD(K,N)$ spaces, namely, metric-measure spaces with synthetic Riemannian Ricci curvature bounded below by $K$ and dimension bounded above by $N$.
We establish the unique solvability of Gel'fand's inverse problem for the class of compact $\RCD(K,N)$ spaces whose regular set admits $C^1$-Riemannian structure.
As an application, we obtain the stability of Gel'fand's inverse problem in the class of closed Riemannian manifolds with bounded Ricci curvature, diameter and volume bounded from below. We note that the results are new even for Einstein orbifolds and (weighted) Riemannian manifolds with non-smooth boundary.

\end{abstract}

\section{introduction}

In the 1990s, Cheeger and Colding built an influential program \cite{CC1,CC2,CC3} on the structure of Ricci limit spaces. In particular, they proved the spectral convergence of the Laplacian under the measured Gromov-Hausdorff convergence in the class of $n$-dimensional closed Riemannian manifolds with Ricci curvature bounded from below $\textrm{Ric}\geq K$, extending Fukaya's result \cite{F87} for bounded sectional curvature to Ricci curvature. Later, by using this theory, Ding \cite{Ding} established the local uniform convergence of the heat kernels in the same setting.

Then a natural question is to examine the {\it inverse problem}: to what extent does the spectrum or the heat kernel determine the geometry?
While the spectrum alone cannot determine the geometry in general, as is well-known in Kac's question ``Can you hear the shape of a drum?", the heat kernel does provide sufficient information to determine Riemannian manifolds due to \cite{BBG,KK1,KK2}.
More precisely, let $M_1,M_2$ be two $n$-dimensional closed smooth Riemannian manifolds with ${\rm Ric}(M)\geq K$ and $\mathrm{diam}(M) \le D$.
For small $\delta>0$, suppose that there is a map $\psi:B\to M_2$ defined on a set $B\subset M_1$ satisfying 
\begin{equation}\label{eq:heat comparison-intro}
 1-\delta \le \frac{p_{M_2}(\psi(x), \psi(y), t)}{p_{M_1}(x, y, t)} \le 1+\delta, \quad \textrm{for all }x, y\in B,\, t\in [\delta,1],
\end{equation}
where $p_{M_i}$ denotes the heat kernels of $M_i$ for $i=1,2$.
If $B=M_1$, then $\dist_{\mathrm{GH}}(M_1, M_2)<\varepsilon_{n,K,D}(\delta)$ depending on $n,K,D$, see for instance Corollary \ref{cor:global rigidity} for a proof with notation (\ref{eq:small}). 


In fact, the stronger statement is true even when one only assumes the closeness of heat kernels via $\psi$ defined on any open subset $B\subset M_1$, provided that the manifolds are in the more restrictive class of bounded Ricci curvature, diameter and injectivity radius bounded from below.
This is known as the stability formulation of Gel'fand's inverse problem \cite{Ge} that was originally formulated in 1954 and solved by \cite{BK,T95,AKKLT}. 
The complete solution \cite{AKKLT} in the class above relies on the fact that the Gromov-Hausdorff limit spaces of this class are Riemannian manifolds (of lower regularity in the Zygmund class $C^2_*$).
The inverse problem is open in the general case.

\smallskip
{\bf Problem A.} {\it Let $\psi:B\to M_2$, defined only on an open subset $B\subset M_1$, be a map that almost preserves the heat kernels of two $n$-dimensional closed smooth Riemannian manifolds $M_1,M_2$ with ${\rm Ric}(M_i)\geq K$ and $\mathrm{diam}(M_i)\le D$ in the sense of \eqref{eq:heat comparison-intro}. Are $M_1,M_2$ close to each other in the Gromov-Hausdorff topology?}

\smallskip

It is worth mentioning that if we drop the assumption of Ricci curvature bound, then we can easily construct a counterexample as follows. Take the glued space $X$ of two unit spheres $\mathbb{S}^2_i$ for $i=1,2$ at each base point $p_i$ and a sequence of Riemannian metrics $g_i$ on $\mathbb{S}^2$, which looks like dumbbells with thin necks, Gromov-Hausdroff converging to $X$. Then considering a connected open subset of $X$ which is away from the glued point gives a desired counterexample (see also \cite[Ex. 3.8]{H18}).

To consider the general case, the first step is to relax the lower bound for the injectivity radius to the non-collapsed condition (namely, a lower bound for volume).
However, relaxing the injectivity radius condition already allows the limit spaces to be singular and the classical methods of inverse problems break down.
It is a major question whether the inverse problem can be studied in the singular setting. 
In this paper, we solve Problem A in the non-collapsed class of closed Riemannian manifolds with bounded Ricci curvature and diameter.

\smallskip
Let $\mathcal{M}(n, D, v)$ be the class of $n$-dimensional closed smooth Riemannian manifolds $M$ satisfying
\begin{equation}
|\textrm{Ric}(M)|\leq 1, \quad \textrm{diam}(M)\leq D, \quad \textrm{Vol}(M)\geq v>0,
\end{equation}
and let $\dist_{\mathrm{GH}}$ denote the Gromov-Hausdorff distance.

\smallskip
We are ready to state the first main result of the paper.

\begin{theorem}\label{thm:function-intro}
For all $n \in \mathbb{N}$ and $\epsilon, r, v, D \in (0, \infty)$, there exists $\delta=\delta(n, \epsilon, r, v, D)>0$ such that the following holds. For two closed Riemannian manifolds $M_i \in \mathcal{M}(n, D, v)$ for $i=1,2$, if there exists a map 
$\psi:B \to M_2$ on an open ball $B\subset M_1$ of radius $r$ satisfying (\ref{eq:heat comparison-intro}), 
then
\begin{equation}
\dist_{\mathrm{GH}}(M_1, M_2)<\epsilon.
\end{equation}
Furthermore, $\psi$ is obtained as the restriction of an $\epsilon$-Gromov-Hausdorff approximation $\Psi: M_1 \to M_2$ to $B$, with the following almost uniqueness of $\Psi$: if $\tilde \Psi:M_1 \to M_2$ is also such an extension of $\psi$, then $\dist_{M_2}(\Psi(x), \tilde \Psi(x))<\epsilon$ for any $x \in M_1$.
\end{theorem}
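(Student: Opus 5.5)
We argue by contradiction and compactness, deducing the stability statement from a uniqueness theorem for limit spaces. Suppose the claim fails for some fixed $n,\epsilon,r,v,D$. Then there are sequences $M_{1,j},M_{2,j}\in\mathcal{M}(n,D,v)$, balls $B_j\subset M_{1,j}$ of radius $r$, and maps $\psi_j:B_j\to M_{2,j}$ satisfying \eqref{eq:heat comparison-intro} with $\delta_j\to 0$, but for which the conclusion fails.

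\textbf{Step 1: pass to limits.} By Gromov's precompactness theorem and the volume lower bound, after a subsequence $M_{i,j}\to X_i$ in the non-collapsed measured Gromov--Hausdorff sense. Each $X_i$ is a compact non-collapsed $\RCD(-(n-1),n)$ space carrying the Hausdorff measure. Since $|\mathrm{Ric}|\le 1$, the Cheeger--Colding--Anderson theory applies: the regular set of $X_i$ is open, and on it the limit metric is $C^{1,\alpha}$ (indeed $C^{1,\alpha}$ in harmonic coordinates). In particular it admits a $C^1$-Riemannian structure, so $X_i$ lies in the class for which the paper proves unique solvability.

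By Ding's heat kernel convergence, $p_{M_{i,j}}\to p_{X_i}$ uniformly on compact sets for $t\in[\tau,1]$, for every $\tau>0$. The heat kernel ratio bounds \eqref{eq:heat comparison-intro}, combined with the Li--Yau-type Gaussian estimates, make $\psi_j$ equi-continuous in a coarse sense: if $\dist(x,y)$ is small, then $p(\psi x,\cdot,t)$ and $p(\psi y,\cdot,t)$ are close, and heat kernels at small times separate points. An Arzel\`a--Ascoli argument for maps between GH-converging spaces then gives a limit map $\psi_\infty:B_\infty\to X_2$, defined on a ball $B_\infty\subset X_1$ of radius $r$, with $p_{X_2}(\psi_\infty x,\psi_\infty y,t)=p_{X_1}(x,y,t)$ exactly for $t\in(0,1]$. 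Real-analyticity in $t$ extends this identity to all $t>0$.

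\textbf{Step 2: invoke uniqueness.} The main theorem on compact $\RCD(K,N)$ spaces with $C^1$ regular part (the unique solvability of Gel'fand's problem stated in the abstract) shows that $\psi_\infty$ extends to a measure-preserving isometry $\Phi:X_1\to X_2$. Moreover the extension is unique: any two isometric extensions agree on the open set $B_\infty$. Composing $\Phi$ with the GH approximations $M_{1,j}\to X_1$ and $X_2\to M_{2,j}$ produces $\epsilon$-GH approximations $\Psi_j:M_{1,j}\to M_{2,j}$ that are uniformly close to $\psi_j$ on $B_j$. After a small modification on $B_j$, $\Psi_j$ restricts exactly to $\psi_j$. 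Hence $\dist_{\mathrm{GH}}(M_{1,j},M_{2,j})\to 0$, which contradicts the assumption.

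\textbf{Almost uniqueness of the extension.} Suppose instead that there are two $\epsilon_j$-GH approximations $\Psi_j,\tilde\Psi_j$ (with $\epsilon_j\to0$) extending $\psi_j$ and differing by at least $\epsilon$ at some point. Both converge to isometries $X_1\to X_2$ that agree on $B_\infty$. An isometry between $\RCD$ spaces preserves heat kernels, so the isometry $\Phi^{-1}\circ\tilde\Phi$ of $X_1$ fixes $B_\infty$ pointwise. It must therefore be the identity: each point $z$ is determined by the function $p(z,\cdot,t)|_{B_\infty}$, by unique continuation and the spectral expansion. This contradicts the assumed $\epsilon$-separation.

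The main obstacle is Step 2 itself, namely uniqueness on singular limits. The plan there is to reconstruct $X_1$ from the heat kernel on $B_\infty$ by the boundary control/unique continuation method. The key difficulty is a unique continuation theorem for the wave or heat equation across the singular set, which has codimension at least $4$ under two-sided Ricci bounds. That result is the one I am assuming the paper establishes. In Step 1, the delicate point is keeping the ratio control uniform near the diagonal when passing $\psi_j$ to the limit.
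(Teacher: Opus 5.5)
Your proposal is correct and follows essentially the paper's own route. The paper argues by contradiction and compactness to non-collapsed limits whose regular set is $C^{1,\alpha}$, builds a limit map that preserves heat kernels (your analyticity step extends this to all $t>0$), and then applies Theorem~\ref{uniqueness-bijection}; through Lemma~\ref{lem:surjective}, that theorem already handles the fact that $\psi_\infty$ is not a priori onto an open set.

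Two small corrections. First, the paper never continues solutions across the singular set: unique continuation is propagated inside the $\meas$-conull regular set along almost-geodesics. The hypothesis you must check is therefore weak convexity of $\mathcal{R}$ (via \cite{D} or \cite{CN12}), not codimension $4$. Second, the claim that an isometry preserves heat kernels holds here only because the limit measures are $\mathcal{H}^n$. For almost uniqueness it is in any case unnecessary, since injectivity of $x\mapsto r_x|_{B_\infty}$ (geodesic non-branching) already forces $\Phi^{-1}\circ\tilde\Phi=\mathrm{id}$.
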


It is worth mentioning that for a map $\psi$ in the theorem above, we do not assume that $\psi$ is a $\delta$-Gromov-Hausdorff approximation to the image: namely, assuming (\ref{eq:heat comparison-intro}) is enough to conclude, see Lemma \ref{lem:surjective}.

\smallskip
Due to the convergence of the heat kernel, Problem A is equivalent to the rigidity problem, or the uniqueness of Gel'fand's inverse problem: {\it does the heat kernel on an open subset uniquely determine the geometry of a Ricci limit space?}
In this paper, we consider the rigidity problem in the more general synthetic ${\rm RCD}(K,N)$ framework, roughly speaking, metric-measure spaces with synthetic Riemannian Ricci curvature bounded below by $K$ and dimension bounded above by $N$.

The synthetic notion of Ricci curvature (lower) bound was introduced on general metric-measure spaces, called the ${\rm CD}(K,N)$ condition, in the seminal works \cite{LV,St1,St2} by Lott-Villani and Sturm independently.
The ${\rm CD}(K,N)$ condition defines a quite general class of spaces containing Riemannian and Finsler structures (in particular, all finite dimensional non-Euclidean normed spaces); however, at the same time, it has been considered to be too general for studying geometry.
To single out Riemannian-like behaviour, a more restrictive notion of synthetic \emph{Riemannian} Ricci curvature bound was introduced in \cite{AGS,G15}, called the ${\rm RCD}(K,N)$ spaces, by combining the linearity of the heat flow with the $\textrm{CD}$-condition.
The $\textrm{RCD}$-condition is stable under the measured Gromov-Hausdorff convergence, and contains (weighted) Riemannian manifolds, Ricci limit spaces and Alexandrov spaces \cite{Petrunin,ZZ,KMS}.
Let us mention that the closure of an open convex subset with measure-zero boundary of an $\RCD$ space is also an $\RCD$ space \cite[Prop. 7.7]{AMS}. In particular, this allows us to apply our results in the RCD setting to weighted Riemannian manifolds with non-smooth boundaries, which has independent interests from the study of inverse problems in the low regularity setting.

\smallskip
Gel'fand's inverse problem \cite{Ge} was originally formulated as the unique determination of a Schr{\"o}dinger operator from its boundary spectral data. On a bounded domain of $\mathbb{R}^n$, the problem was solved by \cite{NSU} based on the method of complex geometric optics developed in the seminal work \cite{SyU}, see \cite{Novikov88,RS88} for equivalent formulations.
Gel'fand's inverse problem has a natural geometric formulation on Riemannian manifolds: can the Riemannian metric be uniquely determined from the boundary spectral data of the Laplacian or a Schr{\"o}dinger operator? 
This spectral formulation is equivalent to the inverse problem for the heat kernel and an inverse problem for the wave equation from the hyperbolic Dirichlet-to-Neumann map \cite{KKL,KKLM}.
Typically the interior of the manifold itself is unknown so needs to be determined as part of the problem.
The uniqueness of Gel'fand's inverse problem was established for smooth compact Riemannian manifolds with boundary by \cite{BK,T95} in the 1990s, based on Tataru's unique continuation theorem and the Boundary Control method, introduced originally on $\mathbb{R}^n$ in \cite{Bel87}.
The unique solvability for Riemannian metrics of lower regularity $C^2_*$ was proved in \cite{AKKLT}.
Analogous formulations for closed Riemannian manifolds where the spectral data are measured in an open subset were studied in \cite{KrKL,HLOS}, see a recent relaxed formulation in \cite{FKU,FK} and a related formulation for the connection Laplacian \cite{KOP}.
Stronger H{\"o}lder-type stability estimates for the inverse problem were known under additional geometric assumptions \cite{SU98,SU05,AG}, for example when the metric is close to being simple (i.e., with convex boundary and no cut points), see \cite{BZ,BD,M14,SUV16} and the references therein.
In the general case, double-logarithmic stability estimates for Gel'fand's inverse problem were established in \cite{BKL,BILL,L25}, see also a recent related exponential instability result \cite{BLOS} based on the method developed in \cite{KRS}.
The uniqueness of Gel'fand's inverse problem for the non-smooth Gromov-Hausdorff limit spaces of Riemannian manifolds with bounded sectional curvature was proved in \cite{KLLY}, based on the $C^2_*$-regularity of the limit spaces of the orthonormal frame bundles within this class, and a stability estimate for the inverse problem was obtained for a class of orbifolds in \cite{LLY} using the stability method for local distance data developed in \cite{FILLN}.

Gel'fand's inverse problem can be formulated on general metric-measure spaces $(X,\dist,\meas)$, that is, a metric space $(X,\dist)$ equipped with a Borel measure $\meas$ of full support.
Assume that $(X,\dist,\meas)$ has synthetic Riemannian Ricci curvature bounded from below, i.e., satisfying the $\RCD(K,N)$ condition, where the heat kernel is well-defined and continuous.
The uniqueness of Gel'fand's inverse problem is open in the general metric-measure setting.

\smallskip
{\bf Problem B.} \emph{Let $(X,\dist,\meas)$ be a compact ${\rm RCD}(K,N)$ space and $V\subset X$ be an open subset.
Let $p(x,y,t)$ be the heat kernel of $(X,\dist,\meas)$, for $x,y\in X$ and $t\in \mathbb{R}_+$.
Does the heat kernel on $V\times V\times \mathbb{R_+}$ uniquely determine $(X,\dist,\meas)$ up to a measure-preserving isometry?}

\smallskip
Synthetic curvature appears in the study of mathematical inverse problems in the low regularity setting, motivated from imaging applications such as medical imaging and seismic imaging.
In medical imaging, the anisotropic conductivity is modelled as the Riemannian metric, and the celebrated Calder{\'o}n problem of recovering the conductivity is equivalent to determining the Riemannian metric, or its conformal class for dimension $2$, from boundary measurements \cite{LeeU,LaU}.
For another instance in seismic imaging, the physical material parameters of the solid Earth affect the wave speed that defines a Riemannian metric (more generally Finsler metric, see e.g. \cite{DIL}), and the latter may be reconstructed by solving Gel'fand's inverse problem for the elasticity system.
Curvature provides crucial information for solving geometric inverse problems, notably the well-known boundary rigidity problem \cite{SUV} and the Lorentzian Calder\'on problem \cite{AFO}. However, if physical parameters have lower regularity than $W^{2,p}$, for example, piecewise continuous as typically in practical imaging applications \cite{KLMS}, the curvature is not defined in the usual Riemannian sense, so studying the effect of curvature on inverse problems requires synthetic curvature.
Notably, a further advantage of studying inverse problems in the RCD framework is that treating domains with non-smooth boundaries is inherently more natural to the synthetic machinery for metric-measure spaces.

It is well-known that non-collapsed Ricci limit spaces of Riemannian manifolds in $\mathcal{M}(n,D,v)$ have $C^{1,\alpha}$-Riemannian regular set \cite{CC1}.
We prove the uniqueness of the inverse problem on {\rm RCD}$(K,N)$ spaces with $C^1$-Riemannian regular set. 
In fact, we state the following slightly stronger result.
\begin{theorem}\label{thm:unique almost}
Let $(X,\dist,\meas)$ be a compact {\rm RCD}$(K,N)$ space for some $K\in \mathbb{R}$ and $1\leq N <\infty$.
Let $\mathcal{R}\subset X$ be an open set whose complement is $\meas$-null.
Assume that
\begin{itemize}
\item[(1)] $\mathcal{R}$ is an open weighted $C^1$-Riemannian manifold with locally Lipschitz density $\rho$;

\item[(2)] $\mathcal{R}$ is weakly convex.
\end{itemize}
Let $V\subset X$ be an open subset.
Then the heat kernel on $V\times V\times \mathbb{R_+}$ uniquely determine $(X,\dist,\meas)$ up to a measure-preserving isometry.
\end{theorem}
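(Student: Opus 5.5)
The plan is to follow the classical Boundary Control scheme for Gel'fand's problem on closed manifolds with interior data (as in \cite{KrKL,HLOS}), adapting each step to the RCD setting; the $C^1$-Riemannian structure on $\mathcal{R}$ supplies the unique continuation, and weak convexity lets us pass from local to global metric information. Fix $(X_i,\dist_i,\meas_i)$, $i=1,2$, satisfying the hypotheses, together with open sets $V_i$ and a map identifying $V_1$ with $V_2$ that preserves the heat kernels on $V\times V\times\setR_+$.

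\textbf{Step 1: from the heat kernel to spectral and local data.} Since $X$ is compact, the eigenfunction expansion
\begin{equation*}
p(x,y,t)=\sum_k e^{-\lambda_k t}\phi_k(x)\phi_k(y)
\end{equation*}
converges locally uniformly. So the heat kernel on $V\times V\times\setR_+$ determines the eigenvalues $\lambda_k$ and the restrictions $\phi_k|_V$, up to an orthogonal change of basis inside each eigenspace, and these restrictions are linearly independent on $V$ by unique continuation. The short-time Varadhan asymptotics $-4t\log p(x,y,t)\to \dist(x,y)^2$, valid on RCD spaces, then recover $\dist|_{V\times V}$. The measure $\meas|_V$ is recovered by integrating, since $\int_V p(x,y,t)\,\di\meas(y)\to 1$ for $x\in V$ and $p(x,x,t)$ has the Bishop--Gromov-controlled asymptotics. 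Hence we may identify $V_1=V_2=V$ isometrically and measure-preservingly, with the same spectral data on $V$.

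\textbf{Step 2: the Blagoveščenskiĭ identity and unique continuation.} Consider wave solutions $u^f(t)=\sum_k u_k(t)\phi_k$ driven by sources $f\in C_c^\infty((0,T)\times V)$. Their inner products $\langle u^f(T),u^h(T)\rangle_{L^2(X)}$ are computable from the spectral data. The key point is approximate controllability: the closure of $\{u^f(T): \supp f\subset (T-s,T)\times V\}$ equals $L^2$ of the domain of influence
\begin{equation*}
M(V,s)=\{x:\dist(x,V)\le s\}.
\end{equation*}
By duality this reduces to a Tataru-type unique continuation for the wave equation. This is where the hypotheses enter. On $\mathcal{R}$ the operator is $\rho^{-1}\mathrm{div}(\rho\nabla\cdot)$ with $C^1$ metric and Lipschitz $\rho$, i.e.\ Lipschitz leading coefficients, for which unique continuation across non-characteristic surfaces is available. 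One propagates along chains inside $\mathcal{R}$, and weak convexity of $\mathcal{R}$ guarantees that almost-minimizing paths from $V$ stay in $\mathcal{R}$, so distances computed inside $\mathcal{R}$ are the true distances. Since $X\setminus\mathcal{R}$ is $\meas$-null, $L^2(X)=L^2(\mathcal{R})$ and nothing is lost there. I expect this step to be the main obstacle: Tataru's theorem is classically for smooth or $C^2$ coefficients, so one must either use the versions for Lipschitz coefficients with time-analytic structure, or an equivalent spectral formulation (e.g.\ via the heat equation or the elliptic continuation of $\sum_k e^{-\lambda_k t}\phi_k$). One must also handle singular points lying on the boundary of domains of influence using measure-zero and convexity arguments.

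\textbf{Step 3: reconstruction of the space.} With controllability in hand, one computes, for multi-indexed sets $\{(V_j,s_j)\}$ with $V_j\subset V$ open, the volumes $\meas\big(\bigcap_j M(V_j,s_j)\big)$ from the spectral data. This determines the interior distance functions $r_x=\dist(x,\cdot)|_V$ for $x$ in a dense subset, hence the map $R:X\to C(\overline V)$, $x\mapsto r_x$. By compactness and continuity, $R(X)$ is determined for all of $X$. $R$ is injective: if $r_x=r_y$, unique continuation of eigenfunctions or wave solutions through $\mathcal{R}$ forces $x=y$, using density of $\mathcal{R}$. $R$ is also a homeomorphism onto its image. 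The metric on $R(X)$ is recovered by first recovering $\dist$ locally near points of $R(\mathcal{R})$ through the $C^1$ chart structure, namely the metric tensor from first-order behaviour of the distance functions, and then taking lengths of curves. Weak convexity implies the intrinsic length metric on $\mathcal{R}$ coincides with $\dist$, and passing to the closure recovers $\dist$ on $X$. The measure is recovered via $\meas(\bigcap_j M(V_j,s_j))$ or the density $\rho$ on $\mathcal{R}$, and $\meas(X\setminus\mathcal{R})=0$. Applying this construction to both spaces yields the measure-preserving isometry.
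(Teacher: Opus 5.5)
Your Steps 1--2 follow the paper's route. The parallels are the spectral data on $V$ and controllability by duality from a Tataru-type unique continuation propagated along almost-geodesics inside $\mathcal{R}$, using that $X\setminus\mathcal{R}$ is $\meas$-null. Step 3, however, has two genuine gaps. The first is the injectivity of $R:x\mapsto r_x|_V$, which cannot come from unique continuation. If $r_x=r_y$ on $V$, then $\dist(x,U)=\dist(y,U)$ for every $U\subset V$. So $x$ and $y$ lie in exactly the same domains of influence and the same slices, and no wave or eigenfunction data from $V$ separates them. Injectivity is a purely geometric fact, and for a $C^1$ metric the classical Riemannian proof (uniqueness of geodesics hitting a sphere orthogonally) is also unavailable. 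The paper instead uses geodesic non-branching of $\RCD$ spaces \cite{D}. Take a closed ball $\bar B_s(p)\subset V$ avoiding $x,y$, and a point $z$ with $\dist(z,p)=s$ nearest to $x$, hence also nearest to $y$. Then $[pz]\cup[zx]$ and $[pz]\cup[zy]$ are geodesics of equal length sharing the initial segment $[pz]$, so $x=y$.

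The second gap is recovering $\dist$ from ``the metric tensor from first-order behaviour of the distance functions'' in $C^1$ charts. For a merely $C^1$ metric, geodesics need not be unique and there is no usable exponential map. Distance functions need not give $C^1$ coordinates, and nothing in $R(X)$ visibly encodes the $C^1$ structure of $\mathcal{R}$. This is exactly the smooth machinery the $C^1$ hypothesis rules out. The paper bypasses all differential structure. From the Blagove\v{s}\v{c}enski\u{\i}-type formula and controllability, $\int_I\phi_j\phi_m\,\di\meas$ is determined by the data for every slice $I=\bigcap_l\big(X(U_{2l-1},s_{2l-1})\setminus X(U_{2l},s_{2l})\big)$. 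Take $m=0$ and slices shrinking to a point $p$, which is possible precisely because $R$ is injective. This gives $\phi_{1;j}(p)=\phi_{2;j}(\Psi(p))$ for all $p\in X_1$ and all $j$, where $\Psi=R_2^{-1}\circ R_1$. Hence the heat kernels agree on all of $X\times X\times\setR_+$. Varadhan's asymptotics then make $\Psi$ an isometry, and the short-time diagonal asymptotics make it measure-preserving. Since you already compute slice volumes, this is the natural repair. Two smaller points:
\begin{itemize}
\item The slice computations need finite speed of propagation, which must be proved on $\RCD$ spaces (Theorem \ref{prop-finite-propagation}) rather than assumed.
\item Recovering $\meas|_V$ in Step 1 ``by integrating'' is circular. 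One uses Varadhan to get distance preservation and that the map is locally onto an open set (Lemma \ref{lem:surjective} if surjectivity is not assumed). Then one matches $\di\meas/\di\mathcal{H}^n$ via $t^{n/2}p(x,x,t)$ at $n$-regular points.
\end{itemize}
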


A more precise uniqueness formulation is stated in Theorem \ref{uniqueness-bijection}. 
Our proof is constructive in the sense that it reconstructs a metric-measure isometric copy of the original space, see Remark \ref{remark-reconstruction}.
It should be mentioned that Theorem \ref{thm:unique almost} can be applied to some sub-Riemannian RCD spaces, see \cite{DHPW, P, PW, RS}. Note that we can easily construct an example of non-collapsed $\RCD(K,N)$ space satisfying the assumptions as in the theorem above, with no $C^{1, \alpha}$-regularity of the Riemannian metric by a graph of a function, see also \cite{KOV, MR} along this direction.

Let us note that the open set $\mathcal{R}$ assumed in Theorem \ref{thm:unique almost} is a priori smaller than the set $\mathcal{R}_n$ of $n$-regular points where $n$ is the essential dimension of $(X,\dist,\meas)$, and that $\rho$ is not necessarily to be $C^1$ even if $\mathcal{R}$ is smooth, see for instance \cite{SZ} for an example of Ricci flat (collapsed) limits.
Recall that $\mathcal{R}$ is called weakly convex, if for any $x,y\in \mathcal{R}$ and any $\epsilon>0$, there exists an $\epsilon$-geodesic $\gamma\subset \mathcal{R}$ connecting $x,y$.
A piecewise geodesic curve $\gamma$ between $x,y$ is called an $\epsilon$-geodesic if its length $L(\gamma)\leq (1+\epsilon^2)\dist(x,y)$. It is known that $\mathcal{R}_n$ is weakly convex \cite{D}.

\begin{corollary}
    Theorem \ref{thm:unique almost} applies to non-collapsed Ricci limit spaces with bounded Ricci curvature, cross sections of tangent cones of such spaces, and Einstein orbifolds. 
\end{corollary}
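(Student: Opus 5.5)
The plan is to check, for each of the three classes, the hypotheses of Theorem \ref{thm:unique almost}: (1) an open set $\mathcal{R}$ of full measure that is a weighted $C^1$-Riemannian manifold with locally Lipschitz density, and (2) weak convexity of $\mathcal{R}$. In every case we will take $\mathcal{R}=\mathcal{R}_n$, the set of $n$-regular points, with the measure a constant multiple of $\mathcal{H}^n$, so that $\rho$ is constant. Weak convexity of $\mathcal{R}_n$ is then automatic by \cite{D}. That leaves three things to verify: $\mathcal{R}_n$ is open, it carries a $C^1$ (in fact $C^{1,\alpha}$) Riemannian structure compatible with $\dist$, and its complement is $\meas$-null.

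\emph{Non-collapsed Ricci limits with bounded Ricci curvature.} Let $X$ be a limit of manifolds $M_i\in\mathcal{M}(n,D,v)$. By \cite{CC1}, and Anderson's harmonic-radius estimates under $|\mathrm{Ric}|\le 1$, the regular set $\mathcal{R}_n$ is open. It is a $C^{1,\alpha}$ manifold whose metric is $C^{1,\alpha}$ in harmonic coordinates, and it induces $\dist$ locally. The singular set has codimension at least $2$ (in fact $4$ by Cheeger--Naber), so it is $\mathcal{H}^n$-null. The limit measure is $\mathcal{H}^n$ up to normalization, by volume convergence. Compactness and the $\RCD(-(n-1),n)$ condition are standard.

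\emph{Cross sections of tangent cones.} A tangent cone $C(Z)$ of such a limit is a non-collapsed $\RCD(0,n)$ space, and $Z$ is $\RCD(n-2,n-1)$ by Ketterer's cone theorem. The regular set of $C(Z)$ is open and smooth, in fact Ricci-flat and hence analytic in harmonic coordinates, because tangent cones are themselves limits of rescalings with $|\mathrm{Ric}|\to 0$ and the $\epsilon$-regularity theorem applies. Its intersection with the slice $\{r=1\}$ gives an open set in $Z$ that is smooth Einstein with full measure. Weak convexity of this set follows from \cite{D} applied to $Z$, since it coincides with $\mathcal{R}_{n-1}(Z)$.

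\emph{Einstein orbifolds.} A compact Einstein orbifold satisfies $\RCD(K,n)$ with $\mathcal{H}^n$. Its regular part is the complement of the singular strata, which is open, smooth and of codimension at least $2$, so it has measure zero. It is weakly convex because any geodesic can be perturbed off a set of codimension at least $2$ with arbitrarily small length increase, or again by \cite{D}.

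\emph{Main obstacle.} The key delicate point is the cross-section case. There one must ensure that the regular set of $Z$ is genuinely open with a $C^1$ structure, i.e. that points regular in $Z$ correspond to cone points regular in $C(Z)$, where $\epsilon$-regularity applies. I would argue this by noting that $(r,z)$ with $r=1$ is regular in $C(Z)$ exactly when $z$ is regular in $Z$, since tangent cones split off a line.
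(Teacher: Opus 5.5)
Your proposal is correct and follows essentially the paper's route: $C^{1,\alpha}$-regularity of the open regular set from \cite{CC1}, constant density because $\meas$ is a multiple of $\mathcal{H}^n$, weak convexity of $\mathcal{R}_n$ from \cite{D}, and the RCD property of Einstein orbifolds. The paper also cites genuine convexity from \cite{CN12}, but weak convexity is all Theorem \ref{thm:unique almost} needs, and your identification of $\mathcal{R}_{n-1}(Z)$ with the unit slice of $\mathcal{R}_n(C(Z))$ via splitting fills in what the paper leaves implicit for cross sections. One claim is wrong but harmless: with reflections the orbifold singular strata can have codimension one, yet they are still null and \cite{D} still gives weak convexity.
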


Our results can be also applied to weighted Riemannian manifolds with non-smooth convex boundary in the sense of \cite[Prop. 7.7]{AMS}.
Let us remark that some Einstein orbifolds constructed in \cite[Example 0.10]{Ozuch24} cannot be Ricci limits of non-collapsed smooth Einstein manifolds (see also \cite{O1, O2}), and that any Einstein orbifold is an RCD space (due to, for example, techniques in \cite{CW, H18, HS} or the local-to-global property \cite[Th. 7.8]{AMS}).

\smallskip
Theorem \ref{thm:unique almost} extends the uniqueness result \cite{KLLY} on Gel'fand's inverse problem for collapsing Riemannian manifolds with bounded sectional curvature to the synthetic setting, and improves the method so that it does not depend on the smooth approximating sequences of manifolds.
As a special case, our results are valid for $C^1$-Riemannian manifolds with synthetic Riemannian Ricci curvature bounded from below, lowering the minimal regularity for solving the inverse problem from $C^2_*$ to $C^1$. 
Recall that $C^{1,1}\subset C^2_* \subset C^{1,\alpha}$ for any $0<\alpha<1$, see e.g. \cite[Ch. 2.7]{Triebel}.
The relaxation to $C^1$-regularity is essential because the usual Riemannian techniques do not work (e.g., locally geodesic equation is not uniquely solved for $C^{1,\alpha}$-metric \cite{Hartman}), and we rely on metric geometry from RCD spaces to make this possible. 

Our proof is based on Tataru-type unique continuation for wave operators and a non-smooth version of the Boundary Control method.
The unique continuation for wave operators requires the minimal metric regularity $C^1$, due to counterexamples for operators with H{\"o}lder second-order coefficients \cite{Mandache}.
We also refer readers to the interesting counterexample to the (strong) unique continuation for harmonic functions in the general RCD setting \cite{DZ}.
Under $C^1$-regularity, the geodesic equation is locally not uniquely solvable (while uniquely solvable for $C^2_*$-metric, see e.g. \cite[Prop. A.2]{Taylor-book}) and the classical method does not work.
Instead, geodesics are understood in the sense of metric geometry and are non-branching under the RCD condition \cite{D}, see a brief review on RCD spaces in Section \ref{sec-RCD}.
The basic idea is to use the unique continuation on the $C^1$-Riemannian regular set, and perform a non-smooth version of the  Boundary Control method to identify the eigenfunctions and the heat kernel on the whole space.
To establish the unique continuation for wave operators in our setting in Section \ref{sec-uc}, we prove the well-posedness and finite speed of propagation for the wave equation on compact RCD spaces in Section \ref{sec-wave}.
Then using Varadhan-type asymptotics and short-time diagonal asymptotics of the heat kernel in the general RCD setting, we determine the distance and measure to prove Theorem \ref{thm:unique almost} in Section \ref{sec-IP}.
Finally, we prove Theorem \ref{thm:function-intro} in Section \ref{sec-stability}.
We hope that the non-smooth methods developed in this paper can be used to study other types of inverse problems.

\smallskip
\noindent {\bf Acknowledgements.}
The authors would like to thank Nicola Gigli for his interest in this project and for helpful discussions and bibliographic suggestions.
The first named author acknowledges support of the Grant-in-Aid for Scientific Research (A) of 25H00586, the Grant-in-Aid for Scientific
Research (B) of 20H01799, the Grant-in-Aid for Scientific Research (B) of 21H00977
and Grant-in-Aid for Transformative Research Areas (A) of 22H05105. 
The second named author acknowledges support for this work from the Research Council
of Finland, the Finnish Centre of Excellence of Inverse Modelling and Imaging grant 352948 and FAME Flagship grant 359182.

\section{RCD spaces} \label{sec-RCD}

\subsection{Definition}
In this subsection let us provide a quick introduction on RCD spaces. We refer to \cite{Gigli_survey, GP} for details.

In the sequel, we fix a metric measure space, $(X, \dist_X, \meas_X)$ (or $(X, \dist, \meas)$, or $X$ for short), namely $(X, \dist)$ is a complete separable metric space, and $\meas$ is a Borel measure which is finite and positive on any open ball $B_r(x)$ of radius $r>0$ centered at $x$ (thus $\supp \meas =X$). The Sobolev space $H^{1,2}(X)$ is defined by the finiteness domain of the Cheeger energy $\Ch:L^2(X, \meas) \to [0, \infty]$;
\begin{equation}
    \Ch(f)=\inf_{f_i}\left\{ \liminf_{i\to \infty} \frac{1}{2}\int_X(\mathrm{Lip}f_i)^2\di\meas\right\},
\end{equation}
where the infimum runs over all bounded Lipschitz functions $f_i$ with $f_i \to f$ in $L^2$, and $\mathrm{Lip}f_i$ denotes the local slope or local Lipschitz constant. It is a Banach space endowed with the norm $\|f\|_{H^{1,2}}=(\|f\|_{L^2}^2+2\Ch(f))^{\frac{1}{2}}$. For any $f \in H^{1,2}(X)$, there exists a \textit{canonical} object $|\nabla f| \in L^2(X)$ such that $\Ch(f)=\frac{1}{2}\int_X|\nabla f|^2 \di \meas$ holds. Then the Sobolev-to-Lipschitz property of $X$ means that for any $f \in H^{1,2}(X)$ with $|\nabla f| \le 1$ for $\meas$-a.e., $f$ has a $1$-Lipschitz representative. Note that the local notions of the above including $H^{1,2}(U)$ (more generally $H^{1,p}(U)$ for $p \in [1, \infty]$) and $D(\Delta, U)$ for an open subset $U$ of $X$ are well-defined, where we immediately use them in the sequel. See for instance \cite{BB} for the details. It is also worth mentioning that for a wide class of metric measure spaces (including $\RCD(K, N)$ spaces for $N<\infty$), $|\nabla f|(x)=\mathrm{Lip}f(x)$ holds for $\meas$-a.e. $x \in U$ if $f$ is locally Lipschitz \cite{Ch99}.

We say that $X$ is infinitesimally Hilbertian if $H^{1,2}(X)$ is a Hilbert space. In the sequel, we assume that $X$ is infinitesimally Hilbertian, then for all $f_i \in H^{1,2}(X)(i=1,2)$, we can define the inner product of the gradients by
\begin{equation}
    \langle \nabla f_1, \nabla f_2\rangle =\lim_{\epsilon \to 0}\frac{|\nabla (f_1+\epsilon f_2)|^2-|\nabla f_1|^2}{2\epsilon} \in L^1(X).
\end{equation}
Let us define the domain $D(\Delta)$ of the Laplacian by the set of all $f \in H^{1,2}(X)$ satisfying that there exists a unique $\phi \in L^2$, denoted by $\Delta_Xf$, or by $\Delta f$ for short, such that
\begin{equation}\label{eq:laplacian definition}
    \int_X\langle \nabla f, \nabla g\rangle \di\meas=-\int_X\Delta f \cdot g \di\meas, \quad \text{for any $g \in H^{1,2}(X)$.}
\end{equation}
We are now in a position to introduce the definition of RCD spaces, where the following volume growth condition for some $C \ge 1$ and some $x \in X$;
\begin{equation}\label{eq:volume}
        \meas(B_r(x)) \le e^{Cr^2},\quad \text{for any $r \ge 1$,}
    \end{equation}
    will play a role to guarantee the stochastic completeness.
\begin{definition}[RCD space]
    A metric measure space $(X, \dist, \meas)$ is said to be an $\RCD(K, N)$ space for some $K \in \mathbb{R}$ and some $N \in [1, \infty]$, or an RCD space for short, if it is infinitesimally Hilbertian, it satisfies the Sobolev-to-Lipschitz property, (\ref{eq:volume}) holds, and the Bochner inequality is satisfied in the following sense,
    \begin{equation}\label{eq:bochner ineq}
        \frac{1}{2}\int_X\Delta \phi |\nabla f|^2\di\meas \ge \int_X\phi\left( \frac{(\Delta f)^2}{N}+\langle \nabla \Delta f, \nabla f\rangle +K|\nabla f|^2\right)\di \meas,
    \end{equation}
    for all $f \in D(\Delta)$ with $\Delta f \in H^{1,2}(X)$, and $\phi \in D(\Delta) \cap L^{\infty}(X)$ with $\phi \ge 0$ and $\Delta \phi \in L^{\infty}(X)$. 
\end{definition}

Note that the regularity of $f, \phi$ requested in the definition above is, a priori, optimal to justify (\ref{eq:bochner ineq}). For a broad audience, let us recall the validity of the definition in the simplest case: for a complete $n$-dimensional Riemannian manifold $M$ without boundary, $M$ is an $\RCD(K,N)$ space if and only if $n \le N$ and $\mathrm{Ric}(M) \ge K$ hold. Let us provide a sketch of the proof of this characterization for the convenience of readers as follows (see \cite{EKS, Han} for more general results).

Recall the Bochner identity for any smooth function $f$ on $M$,
\begin{equation}\label{eq:pointwise Bochner}
    \frac{1}{2}\Delta|\nabla f|^2=|\mathrm{Hess}(f)|^2+\langle \nabla f, \nabla \Delta f\rangle+\mathrm{Ric}(\nabla f, \nabla f),
\end{equation}
and an elementary inequality $|\mathrm{Hess}(f)|^2 \ge \frac{(\Delta f)^2}{n}$. 
First let us prove ``if'' part. Thanks to the Bochner identity, we can prove (\ref{eq:bochner ineq}) for smooth functions $f$ if integration by parts works. Then we can justify this for general $f$ by using the heat flow, explained in the next subsection. Next let us prove ``only if'' part. Notice that (\ref{eq:bochner ineq}) together with cut-off yields for any smooth function $f$,
\begin{equation}\label{eq:pointwise Bochner ineq}
    \frac{1}{2}\Delta|\nabla f|^2 \ge \frac{(\Delta f)^2}{N}+\langle \nabla f, \nabla \Delta f\rangle+K|\nabla f|^2.
\end{equation}
Finding  a harmonic chart around any point $x$ with vanishing Hessians at $x$, it follows from applying the chart to (\ref{eq:pointwise Bochner}) and (\ref{eq:pointwise Bochner ineq}) that $\mathrm{Ric}(M)\ge K$ holds. The remaining inequality $n \le N$ is justified by the Bishop-Gromov inequality for RCD spaces because this implies that the Hausdorff dimension is at most $N$.

\smallskip
Finally let us introduce a special subclass of RCD spaces, called \textit{non-collapsed} spaces, which will appear in our applications.
\begin{definition}[Non-collapsed RCD space \cite{DG}]
    An $\RCD(K, N)$ space $(X, \dist, \meas)$ is said to be \textit{non-collapsed} if $\meas$ coincides with the $N$-dimensional Hausdorff measure $\mathcal{H}^N$.
\end{definition}

\subsection{Heat kernel}
Let us fix an $\RCD(K, N)$ space $X$ for some $K \in \mathbb{R}$ and some $N \in [1, \infty)$, where some of the following are also justified (as possibly weaker formulations) in more general setting (e.g. PI spaces).

The gradient flow of the Cheeger energy is called the heat flow, denoted by $h_t:L^2(X) \to L^2(X)$, namely for any $f \in L^2(X)$, there exists a unique absolutely continuous (eventually smooth, see \cite{GP}) curve $h_{\cdot}f:(0, \infty) \to L^2(X)$ such that $h_tf \in D(\Delta)$ holds for any $t \in \mathbb{R}_+$, that $h_tf \to f$ holds in $L^2$ as $t \to 0^+$, and that $\frac{\di}{\di t}h_tf=\Delta h_tf$ holds for $\mathcal{L}^1$-a.e. $t \in \mathbb{R}_+$ (thus eventually for any $t \in \mathbb{R}_+$, by the smoothness). The heat flow can be written by the canonical symmetric continuous function, called the heat kernel, $p=p_X:X \times X \times \mathbb{R}_+ \to \mathbb{R}$, in the sense: for any $f \in L^2(X)$,
\begin{equation}
    h_tf(x)=\int_Xf(y)p(x,y,t)\di\meas(y), \quad \text{for $\meas$-a.e. $x \in X$.}
\end{equation}
Let us recall Gaussian estimates on $p$ established in \cite{JLZ}.
\begin{theorem}[Gaussian estimate]\label{thm:gaussian}
For any $\epsilon>0$, there exists $C=C(K, N, \epsilon)>1$ such that 
\begin{equation} \label{eq-Gaussian-estimate}
    \frac{1}{C\meas(B_{\sqrt{t}}(x))}\cdot\exp \left(-\frac{\dist(x,y)^2}{4(1-\epsilon)t} -Ct\right)\le p(x, y, t) \le \frac{C}{\meas(B_{\sqrt{t}}(x))}\cdot\exp \left(-\frac{\dist(x,y)^2}{4(1+\epsilon)t} +Ct\right)
\end{equation}
holds for all $t\in \mathbb{R}_+$ and all $x, y \in X$. Moreover,
\begin{equation}\label{eq-gradient-Gaussian-estimate}
    |\nabla_xp(x, y, t)|\le \frac{C}{\sqrt{t}\meas(B_{\sqrt{t}}(x))}\cdot \exp \left(-\frac{\dist(x,y)^2}{4(1+\epsilon)t}+Ct\right)
\end{equation}
    holds for all $t \in \mathbb{R}_+$ and $\meas$-a.e. $x,y \in X$.
\end{theorem}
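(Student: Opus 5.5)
The plan is to follow the Li--Yau strategy, adapted to the non-smooth setting (this is how \cite{JLZ} proceeds), and to use only two facts about $\RCD(K,N)$ spaces. The first is the Bishop--Gromov inequality. It gives local volume doubling, $\meas(B_{2r}(x))\le C(K,N)e^{C\sqrt{|K|}r}\meas(B_r(x))$, and the comparison $\meas(B_{\sqrt t}(y))\le \meas(B_{\sqrt t}(x))\,C e^{C\dist(x,y)/\sqrt t+Ct}$. The second is the Bochner inequality (\ref{eq:bochner ineq}), applied to $f=\log h_t u$ for positive initial data $u$. Since $p$ is continuous and positive (positivity follows from the irreducibility of the heat flow), all manipulations can first be done for $h_tu$ with $u$ bounded and bounded away from $0$, and then passed to the kernel.

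\textbf{Step 1: Li--Yau--Harnack.} From (\ref{eq:bochner ineq}) I would derive the Li--Yau inequality (in the form of Garofalo--Mondino/Jiang): for every $\alpha>1$,
\[
|\nabla \log h_tu|^2-\alpha\,\partial_t\log h_tu\le \frac{N\alpha^2}{2t}+C(N,K,\alpha),
\]
in the weak sense. The idea is the Bakry--Ledoux semigroup interpolation. Differentiate $s\mapsto h_s\bigl(h_{t-s}u\,|\nabla\log h_{t-s}u|^2\bigr)$ and use the self-improved Bochner inequality, so that no pointwise maximum principle is needed. Integrating along an almost-minimising geodesic (these exist by the length structure) gives the parabolic Harnack inequality
\[
p(x,z,s)\le p(y,z,t)\left(\tfrac ts\right)^{N\alpha/2}\exp\Bigl(\tfrac{\alpha\,\dist(x,y)^2}{4(t-s)}+C(t-s)\Bigr).
\]

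\textbf{Step 2: on-diagonal bounds, then the off-diagonal lower bound.} Apply the Harnack inequality with $s$ and $t$ comparable and integrate over $y\in B_{\sqrt t}(x)$, using $\int_X p\,\di\meas=1$ (stochastic completeness, guaranteed by (\ref{eq:volume})). This gives
\[
p(x,x,t)\asymp \frac{e^{\pm Ct}}{\meas(B_{\sqrt t}(x))}.
\]
The lower bound in (\ref{eq-Gaussian-estimate}) then follows by applying the Harnack inequality between $(x,x,\epsilon t)$ and $(x,y,t)$ with $\alpha$ close to $1$. The exponent is $\alpha/\bigl(4(1-\epsilon)t\bigr)$, so choosing $\alpha=\alpha(\epsilon)$ and absorbing the factor $\epsilon^{-N\alpha/2}$ into $C$ gives $4(1-\epsilon)$ after renaming $\epsilon$.

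\textbf{Step 3: upper bound with the sharp Gaussian constant.} This step is the main obstacle, because the constant $4(1+\epsilon)$ requires sharpness, not merely some Gaussian decay. I would use Davies' integrated maximum principle. For $\xi(z,s)=\dist(z,y)^2/\bigl(4(1+\epsilon)(\tau-s)\bigr)$ one has $|\nabla\xi|^2\le -\partial_s\xi\cdot(1+\epsilon)^{-1}$, since $\dist(\cdot,y)$ is $1$-Lipschitz and $|\nabla \dist|\le1$ a.e. It follows that $\int (h_s f)^2e^{\xi}\di\meas$ is nonincreasing in $s$; Grigor'yan's argument gives this, using only integration by parts (\ref{eq:laplacian definition}) and the chain rule. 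This yields a weighted $L^2$ off-diagonal bound $\int_{X\setminus B_r(y)}p(z,y,t)^2\di\meas(z)\le \frac{C}{\meas(B_{\sqrt t}(y))}e^{-r^2/(2(1+\epsilon)t)+Ct}$. I would then upgrade it to the pointwise bound using the semigroup property $p(x,y,t)=\int p(x,z,t/2)p(z,y,t/2)\di\meas(z)$, splitting $X$ into the half-space closer to $x$ and the half closer to $y$. The Harnack inequality from Step 1 converts the mean-value information into pointwise information, at the cost of an arbitrarily small loss in the exponent, and the volume comparison from Bishop--Gromov lets me replace $\meas(B_{\sqrt t}(y))$ by $\meas(B_{\sqrt t}(x))$. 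Some slack in $\epsilon$ is lost at each stage, so I will track it and rename $\epsilon$ at the end.

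\textbf{Step 4: gradient bound.} I would combine a Hamilton-type estimate with the bounds already obtained. The estimate is
\[
t|\nabla \log h_tu|^2\le (1+Ct)\log\frac{\sup u}{h_tu},
\]
which is again obtained by semigroup interpolation from Bochner. Apply it to $u=p(\cdot,y,t/2)$, with $h_{t/2}u=p(\cdot,y,t)$, on the ball $B_{\sqrt t}(x)$. Use the on-diagonal upper bound $\sup u\le C/\meas(B_{\sqrt{t/2}}(y))$ and the lower Gaussian bound for $p(x,y,t)$. The logarithm is then at most $C+\dist(x,y)^2/t$, so
\[
|\nabla_x p|\le \frac{C}{\sqrt t}\,p\,\Bigl(1+\frac{\dist(x,y)}{\sqrt t}\Bigr).
\]
Inserting the upper Gaussian bound and absorbing the polynomial factor into a slightly worse exponent gives (\ref{eq-gradient-Gaussian-estimate}) for $\meas$-a.e. $x,y$.
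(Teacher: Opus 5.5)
The paper gives no proof here. It quotes the estimate from \cite{JLZ}, and your architecture (Li--Yau from Bochner, parabolic Harnack, on-diagonal bounds, an integrated maximum principle for the upper bound, a Hamilton-type gradient estimate) is the standard route in that literature. Step 1, the Harnack-based off-diagonal lower bound with $\alpha\to1$, and Step 4 are fine.

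\textbf{The genuine gap is in Step 3, where you locate the difficulty.} First a slip: for $\xi=\dist(z,y)^2/(4(1+\epsilon)(\tau-s))$ one has $\partial_s\xi>0$. So the displayed inequality $|\nabla\xi|^2\le-(1+\epsilon)^{-1}\partial_s\xi$ is false, and $\int(h_sf)^2e^{\xi}\di\meas$ is not monotone. What you want is Grigor'yan's $E_D(s,y)=\int p(z,y,s)^2e^{\dist(z,y)^2/(Ds)}\di\meas(z)$ with $D=2(1+\epsilon)$. It is nonincreasing because $\xi=\dist^2/(Ds)$ satisfies $\partial_s\xi+\tfrac12|\nabla\xi|^2\le0$. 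The bound $E_D(s,y)\le Ce^{Cs}/\meas(B_{\sqrt s}(y))$ still needs the on-diagonal bound; monotonicity alone is useless, since $E_D(s,y)\to\infty$ as $s\to0$. This yields the tail estimate you state.

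The real problem is the upgrade: splitting $X$ into two half-spaces cannot produce the constant $4$. Put $d=\dist(x,y)$. On $\{z:\dist(z,x)\le\dist(z,y)\}$, Cauchy--Schwarz gives $\|p(x,\cdot,t/2)\|_{L^2(X)}\|p(\cdot,y,t/2)\|_{L^2(X\setminus B_{d/2}(y))}\lesssim e^{-d^2/(8(1+\epsilon)t)}$, up to volume factors. Indeed, any estimate using only $\dist(z,y)\ge d/2$ on that half is capped at $(d/2)^2/(4\cdot t/2)=d^2/(8t)$. That is a factor $2$ in the exponent, not $\epsilon$-slack. The sharp constant is exactly what Proposition~\ref{prop:varadhan} uses.

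The fix is to keep the weight on both factors. Since $\dist(x,z)^2+\dist(z,y)^2\ge\tfrac12 d^2$,
\[ p(x,y,t)\le e^{-d^2/(2Dt)}\sqrt{E_D(t/2,x)\,E_D(t/2,y)}, \]
which for $D=2(1+\epsilon)$ gives $4(1+\epsilon)$. Alternatively, drop the semigroup step. Bound the mean of $p(\cdot,y,(1+\delta)t)$ over $B_{\eta\sqrt t}(x)\subset X\setminus B_{d-\eta\sqrt t}(y)$ by Cauchy--Schwarz and the tail estimate. Then pass to the point $x$ with your Harnack inequality, losing only $\eta,\delta\to0$.

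\textbf{There is also an ordering gap in Step 2.} The on-diagonal \emph{lower} bound does not follow from Harnack plus $\int_Xp\,\di\meas=1$. Harnack gives $p(y,x,t/2)\le C\,p(x,x,t)\,e^{C\dist(x,y)^2/t+Ct}$. Integrating over $B=B_{R\sqrt t}(x)$ bounds $p(x,x,t)$ from below only once you know $\int_Bp(\cdot,x,t/2)\di\meas\ge\frac12$, i.e.\ that the tail $\int_{X\setminus B}p(x,\cdot,t/2)\di\meas$ is small. Stochastic completeness says nothing about where the mass sits; you need an off-diagonal upper estimate plus Bishop--Gromov.

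The order should therefore be:
\begin{enumerate}
\item Step 1;
\item the on-diagonal upper bound;
\item Step 3, which uses no lower bound;
\item the on- and off-diagonal lower bounds;
\item Step 4.
\end{enumerate}
For $t\ge1$ and $K<0$, take $B=B_{Rt}(x)$, so that the Bishop--Gromov factor $e^{C\sqrt{|K|}r}$ is beaten by the Gaussian.

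Two smaller corrections. Bishop--Gromov gives $\meas(B_{\sqrt t}(y))\le C(1+d/\sqrt t)^Ne^{C\sqrt{|K|}(d+\sqrt t)}\meas(B_{\sqrt t}(x))$, not $Ce^{Cd/\sqrt t+Ct}\meas(B_{\sqrt t}(x))$; this is still absorbable into $e^{\epsilon d^2/t+C_\epsilon t}$. In Step 4, $\sup_zp(z,y,t/2)\le Ce^{Ct}/\meas(B_{\sqrt{t/2}}(y))$ comes from the off-diagonal upper bound centred at $y$ (via symmetry of $p$), not from the on-diagonal bound.
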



A related rigidity result with Euclidean heat kernel was recently studied in \cite{CT22}.
The Varadhan-type asymptotics for the heat kernel is a direct consequence of the Gaussian estimate \eqref{eq-Gaussian-estimate} and the Gromov-Bishop volume growth (e.g. \cite[Th. 2.3]{St2}).

\begin{proposition}[Varadhan's asymptotics]\label{prop:varadhan}
    The Varadhan-type asymptotics for the heat kernel holds for $\RCD(K,N)$ spaces:
    $$-\lim_{t\to 0^+} 4t \log p(x,y,t) = \dist(x,y)^2,\quad \textrm{ for any }x,y\in X. $$
\end{proposition}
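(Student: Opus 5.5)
We take $-4t\log$ of both sides of the Gaussian estimate \eqref{eq-Gaussian-estimate} and pass to the limit $t\to 0^+$, with $x,y\in X$ and $\epsilon\in(0,1)$ fixed. The only delicate term is the volume prefactor $\meas(B_{\sqrt{t}}(x))$. We must show $4t\log \meas(B_{\sqrt t}(x))\to 0$ as $t\to0^+$, and this reduces to a polynomial lower bound on small-ball volumes.

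\emph{Step 1: volume control.} By the Bishop--Gromov inequality for $\RCD(K,N)$ spaces (e.g. \cite[Th. 2.3]{St2}), for $0<r\le R\le 1$ we have
\[
\frac{\meas(B_r(x))}{\meas(B_R(x))}\ge \frac{\int_0^r s_{K,N}(s)^{N-1}\di s}{\int_0^R s_{K,N}(s)^{N-1}\di s}\ge c(K,N)\Big(\frac rR\Big)^N ,
\]
where $s_{K,N}$ is the model comparison function. With $R=1$ this gives $c\,r^N\meas(B_1(x))\le \meas(B_r(x))\le \meas(B_1(x))$. Here $\meas(B_1(x))\in(0,\infty)$, since $\meas$ is finite and positive on balls. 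Taking logarithms, $|\log\meas(B_{\sqrt t}(x))|\le C_x(1+|\log t|)$ for $t\in(0,1]$. Hence $4t\log\meas(B_{\sqrt t}(x))\to0$, because $t\log t\to0$.

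\emph{Step 2: upper bound for the limit.} Apply $-4t\log$ to the lower Gaussian bound:
\[
-4t\log p(x,y,t)\le 4t\log C+4t\log\meas(B_{\sqrt t}(x))+\frac{\dist(x,y)^2}{1-\epsilon}+4Ct^2 .
\]
Letting $t\to0^+$ and using Step 1 gives $\limsup_{t\to 0^+}(-4t\log p)\le \dist(x,y)^2/(1-\epsilon)$. The upper Gaussian bound gives, in the same way,
\[
\liminf_{t\to 0^+}\big(-4t\log p(x,y,t)\big)\ge \frac{\dist(x,y)^2}{1+\epsilon}.
\]
The constant $C=C(K,N,\epsilon)$ is independent of $t$, so these limits are legitimate for each fixed $\epsilon$. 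Since $\epsilon\in(0,1)$ is arbitrary, sending $\epsilon\to0$ yields the existence of the limit and its value $\dist(x,y)^2$. The estimate \eqref{eq-Gaussian-estimate} holds for all $x,y$, not only $\meas$-a.e., because $p$ is continuous, so the statement holds for every pair of points.

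\emph{Main obstacle.} The only nontrivial point is Step 1: the logarithm of the volume prefactor must be $o(1/t)$. A lower bound $\meas(B_r(x))\ge c\,r^N$ suffices for this, and it comes from Bishop--Gromov (local doubling with exponent $N$). Everything else is elementary limit-taking.
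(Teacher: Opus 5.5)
Your proof is correct and is the argument the paper intends: the paper simply states that Varadhan's asymptotics is a direct consequence of the Gaussian estimate \eqref{eq-Gaussian-estimate} together with Bishop--Gromov volume growth \cite[Th. 2.3]{St2}, which is exactly your Step 1 (showing $4t\log\meas(B_{\sqrt t}(x))\to 0$) followed by applying $-4t\log$ to both Gaussian bounds and sending $\epsilon\to 0$. One cosmetic point: when $K>0$ is large, apply Bishop--Gromov with $K$ replaced by $\min\{K,0\}$, so that the comparison function stays positive on $[0,1]$.
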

Finally, let us mention that the heat kernels are continuous with respect to the pointed measured Gromov-Hausdorff convergence $X_i \to X$ of pointed $\RCD(K, N)$ spaces (see \cite{AHT}):
\begin{equation}\label{eq:heat conv}
    p_{X_i}(x_i, y_i, t_i) \to p_X(x, y, t), \quad \text{whenever $x_i, y_i \in X_i \to x, y \in X$, resp. and $t_i \to t$ in $\mathbb{R}_+$.}
\end{equation}


\subsection{Eigenfunctions and spectral datum}
Let $(X, \dist, \meas)$ be an $\RCD(K, N)$ space for some $K \in \mathbb{R}$ and some $N \in [1, \infty)$. Since it is known that $X$ is a geodesic PI space (see for instance \cite{BB}), for all $x \in X$ and $r>0$, the canonical inclusion $H^{1,2}(B_r(x)) \hookrightarrow L^2(B_r(x))$ is a compact operator.  In particular, if $X$ is compact, then the standard functional analysis allows us to write the spectrum of the Laplacian on $X$, counted with their multiplicities,  as
\begin{equation}
    0=\lambda_0<\lambda_1\le \lambda_2 \le \cdots \to \infty,
\end{equation}
where $f \in L^2(X)$ is said to be an eigenfunction of the eigenvalue $\lambda$ if $f \in D(\Delta)$ with $\|f\|_{L^2}\neq 0$ and $\Delta f+\lambda f=0$. Fixing a \textit{spectral datum} $\{\phi_i\}_{i\in \mathbb{N}}$ of $X$ (namely, it is a complete orthonormal basis of $L^2(X)$ with $\Delta \phi_i+\lambda_i\phi_i=0$, where we will also use the detailed notations $\lambda_i^X, \phi_i^X$), 
thanks to (\ref{eq-Gaussian-estimate}) and (\ref{eq-gradient-Gaussian-estimate}), we have the following gradient-$L^{\infty}$-estimates and  a Weyl-type bound (e.g. \cite{AHPT});
\begin{equation}\label{eq:eigenfunction-estimate}
    \|\nabla \phi_i\|_{L^{\infty}} \le C\lambda_i^{\frac{N+2}{4}},\quad \|\phi_i\|_{L^{\infty}} \le C\lambda_i^{\frac{N}{4}}, \quad \lambda_i \ge C^{-1}i^{\frac{2}{N}},
\end{equation}
where $C$ is a positive constant depending only on $K, N$ and an upper bound on the diameter $\mathrm{diam}(X)$ of $X$. In particular, we know
\begin{equation}\label{eq:heat exp}
    p(x, y, t)=\sum_{i \ge 0}e^{-\lambda_it}\phi_i(x)\phi_i(y), \quad \text{in $C(X \times X)$}
\end{equation}
for any $t>0$, and 
\begin{equation}
    p(\cdot, y, t)=\sum_{i \ge 0}e^{-\lambda_it}\phi_i(y)\phi_i,\quad \text{in $H^{1,p}(X)$}
\end{equation}
for all $y \in X$, $t \in \mathbb{R}_+$ and $p \in [1, \infty)$.

\smallskip
Let us mention that the local versions of the following are main targets of the paper.
\begin{theorem}\label{thm:global rigidity}
    Let $X, Y$ be compact $\RCD(K,N)$ spaces for some $K \in \mathbb{R}$ and some $N \in [1, \infty)$, and let $\psi:X \to Y$ be a map preserving the heat kernels. Then $\psi$ is a metric measure isometry.
\end{theorem}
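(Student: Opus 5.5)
The plan is to use Varadhan's asymptotics (Proposition \ref{prop:varadhan}) to show that $\psi$ is distance preserving, then to use the heat semigroup to show that $\psi$ is surjective and pushes $\meas_X$ forward to $\meas_Y$. Here ``preserving the heat kernels'' means $p_Y(\psi(x),\psi(y),t)=p_X(x,y,t)$ for all $x,y\in X$ and $t>0$.

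\emph{Step 1: isometric embedding.} For any $x,y\in X$, Proposition \ref{prop:varadhan} applied on both spaces gives
\begin{equation*}
\dist_Y(\psi(x),\psi(y))^2=-\lim_{t\to0^+}4t\log p_Y(\psi(x),\psi(y),t)=-\lim_{t\to0^+}4t\log p_X(x,y,t)=\dist_X(x,y)^2 .
\end{equation*}
So $\psi$ is an isometric embedding. In particular it is continuous and injective, and $\psi(X)$ is a compact subset of $Y$.

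\emph{Step 2: surjectivity.} I argue by contradiction. Suppose $\psi(X)\neq Y$. Then there is a point $z\in Y$ and a radius $r>0$ with $B_{r}(z)\cap\psi(X)=\emptyset$, which gives $\meas_Y(Y\setminus\psi(X))>0$. Fix $x\in X$ and use stochastic completeness on both spaces; it holds in the compact RCD setting, e.g.\ via the expansion \eqref{eq:heat exp} with $\phi_0$ constant. Together with the fact that $\psi$ is a Borel injection, this gives
\begin{equation*}
1=\int_X p_X(x,y,t)\di\meas_X(y)\quad\text{and}\quad 1=\int_Y p_Y(\psi(x),w,t)\di\meas_Y(w).
\end{equation*}
The second integral splits into the part over $\psi(X)$ and the part over $Y\setminus\psi(X)$, and the latter is strictly positive by the Gaussian lower bound \eqref{eq-Gaussian-estimate}. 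So the mass of $p_Y(\psi(x),\cdot,t)$ on $\psi(X)$ is strictly less than $1$.

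This alone does not contradict anything, because we do not yet know how $\psi_\sharp\meas_X$ compares with $\meas_Y$ on $\psi(X)$. I will therefore first identify the measures locally, using the short-time diagonal behaviour of the heat kernel. Take $x\in X$ and a sequence $t\to0$. The Gaussian bounds give $p_X(x,x,t)\meas_X(B_{\sqrt t}(x))\in[C^{-1},C]$, and similarly in $Y$. Since $B_{\sqrt t}(\psi(x))\supset\psi(B_{\sqrt t}(x))$, this yields the comparability $\meas_X(B_s(x))\asymp \meas_Y(B_s(\psi(x)))$ for small $s$.

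Comparability up to constants, however, is not equality. The cleanest route to an exact identity is the semigroup identity
\begin{equation*}
p(x,x,2t)=\int p(x,y,t)^2\di\meas(y),
\end{equation*}
used together with the Chapman--Kolmogorov identity for $x\neq y$. Write $\nu=\psi^{-1}_\sharp(\meas_Y|_{\psi(X)})$, a Borel measure on $X$ with $\nu\le$ its own total. Then for all $x,y$ and $t$,
\begin{equation*}
p_X(x,y,2t)=\int_X p_X(x,z,t)p_X(z,y,t)\di\nu(z)+\int_{Y\setminus\psi(X)}p_Y(\psi(x),w,t)\,p_Y(w,\psi(y),t)\di\meas_Y(w).
\end{equation*}
Expand everything in the eigenbasis of $X$, using \eqref{eq:heat exp} and the fact that the functions $\phi_i$ separate points. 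Comparing the two-parameter family in $x,y$ for all $t$ should force the remainder term to vanish and $\nu=\meas_X$. Concretely, the remainder term is a nonnegative kernel whose vanishing can be tested at $x=y$, using positivity of $p_Y$.

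\emph{Main obstacle.} The hard step is exactly this one: turning kernel equality into $\psi_\sharp\meas_X=\meas_Y$ together with surjectivity. If the spectral comparison is too delicate, my fallback is the following. The kernel $p_Y(\psi(\cdot),\psi(\cdot),t)$ equals $p_X$, so it defines on $L^2(X,\meas_X)$ the semigroup $h^X_t$, which has generator $\Delta_X$ and is Markovian. On the other hand, by the computation above it is also a sub-Markovian ``trace'' semigroup on $(X,\nu)$. Taking the derivative at $t=0$ of $\int p(x,y,t)\di\meas(y)$ should then force equality of the measures.

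Once surjectivity and $\psi_\sharp\meas_X=\meas_Y$ are established, Step 1 shows that $\psi$ is a measure-preserving isometry.
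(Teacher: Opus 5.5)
Your Step 1 is correct. Step 2, however, carries all the content of the theorem beyond Varadhan's asymptotics, and it is not proved. From Chapman--Kolmogorov on both spaces you get $\int_X p_X(x,z,t)p_X(z,y,t)\di(\meas_X-\nu)(z)=R_t(x,y)$, with $R_t\ge0$ and $R_t(x,x)>0$ if $\psi$ is not onto. But $\meas_X-\nu$ is a signed measure of unknown sign, so this is no contradiction. The claim that ``comparing the two-parameter family should force $R_t=0$'' is an assertion, not an argument. The fallback via the derivative at $t=0$ fails for a structural reason: short-time behaviour is local. A region of $Y$ at positive distance from $\psi(X)$ contributes only exponentially small (in $1/t$) amounts to anything you compute at points of $\psi(X)$. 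Some global input is needed, and your proposal never uses the long-time behaviour of the kernel.

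The missing ideas are the following.
(i) Let $t\to\infty$ in $p_X(x,x,t)=p_Y(\psi(x),\psi(x),t)$. Since $p(x,y,t)\to\meas(X)^{-1}$ by \eqref{eq:heat exp} and $\lambda_1>0$, this gives $\meas_X(X)=\meas_Y(Y)$.
(ii) The heat kernel is a positive definite kernel on finite signed measures.

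Now put $\mu=\psi_\sharp\meas_X-\meas_Y$ and $F(v)=\int_Y p_Y(v,w,t)\di\mu(w)$.
\begin{itemize}
\item The two stochastic completeness identities you already wrote down say exactly that $F=0$ on $\psi(X)$. Hence $\int_Y F\di\psi_\sharp\meas_X=0$.
\item By Fubini and stochastic completeness, $\int_Y F\di\meas_Y=\mu(Y)$, which is $0$ by (i).
\item Therefore $0=\int_Y F\di\mu=\sum_{i\ge0}e^{-\lambda_i^Yt}\bigl(\int_Y\phi_i^Y\di\mu\bigr)^2$. The uniform convergence of \eqref{eq:heat exp} justifies the interchange of sum and integral. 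So $\mu$ annihilates every eigenfunction.
\item Finite combinations of eigenfunctions are uniformly dense in $C(Y)$: approximate a Lipschitz $f$ by $h_tf$ and use the eigenfunction bounds \eqref{eq:eigenfunction-estimate}. This is the role of \cite[Lem. 2.3]{H24} in the paper. Hence $\mu=0$, i.e.\ $\psi_\sharp\meas_X=\meas_Y$.
\item Surjectivity follows because $\psi(X)$ is closed and $\meas_Y$ has full support.
\end{itemize}
This is precisely the paper's argument. There it is phrased as integrating the eigen-expansion identity first in $\tilde x$ and then in $x$, which yields $\int_Y\phi_i^Y\di\psi_\sharp\meas_X=0$ for all $i\ge1$.
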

\begin{proof}
    Since
    \begin{equation}
        \sum_{i \ge 0}e^{-\lambda^X_it}\phi_i^X(x)\phi_i^X(\tilde x)=\sum_{i \ge 0}e^{-\lambda_i^Yt}\phi_i^Y(\psi(x))\phi_i^Y(\psi(\tilde x)),
    \end{equation}
    letting $t \to \infty$ shows $\meas_X(X)=\meas_Y(Y)$. Integrating this over $\tilde x \in X$ yields
    \begin{equation}
        0=\sum_{i \ge 1}e^{-\lambda_i^Yt}\phi_i^Y(\psi(x))\int_Y\phi_i^Y\di \psi_{\sharp}\meas_X.
    \end{equation}
    Integrating this over $x \in X$ shows $\int_Y\phi_i^Y\di \psi_{\sharp}\meas_X=0$ for any $i \ge 1$. Then, applying \cite[Lem. 2.3]{H24}, we know $\psi_{\sharp}\meas_X=\meas_Y$. In particular $\psi$ is surjective. On the other hand, Varadhan's asymptotics yields that $\psi$ preserves the distances. Thus we conclude.
\end{proof}
Let us use a standard notation in the convergence theory:
\begin{equation}\label{eq:small}
\varepsilon_{c_1, c_2, \ldots, c_m} (\epsilon_1, \epsilon_2, \ldots, \epsilon_l)=\varepsilon (\epsilon_1, \epsilon_2, \ldots, \epsilon_l| c_1, \ldots, c_m)
\end{equation}
denotes a function $\varepsilon: (\mathbb{R}_{>0})^l \times \mathbb{R}^m \to (0, \infty)$ satisfying
\begin{equation}
\lim_{(\epsilon_1, \ldots, \epsilon_l) \to 0}   \varepsilon_{c_1, c_2, \ldots, c_m} (\epsilon_1, \epsilon_2, \ldots, \epsilon_l)=0, \quad \text{for all $c_i \in \mathbb{R}$.}
\end{equation}
\begin{corollary}\label{cor:global rigidity}
    Let $X, Y$ be compact $\RCD(K,N)$ spaces for some $K \in \mathbb{R}$ and some $N \in [1, \infty)$ with $\mathrm{diam}(X) \le D$, $\mathrm{diam}(Y)\le D$, $D^{-1}\le \meas_X(X)\le D$ and $D^{-1}\le \meas_Y(Y) \le D$ for some $D \ge 1$, and let $\psi:X \to Y$ be a map satisfying
    \begin{equation}
        1-\epsilon \le \frac{p_X(x, \tilde x, t)}{p_Y(\psi(x), \psi(\tilde x), t)}\le 1+\epsilon,\quad \text{for all $x, \tilde x \in X$ and $t \in [\epsilon ,1]$}.
    \end{equation}
    Then $\psi$ is an $\varepsilon_{K, N, D}(\epsilon)$-mGH approximation.
\end{corollary}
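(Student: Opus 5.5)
The plan is to argue by contradiction via compactness, reducing to Theorem \ref{thm:global rigidity}. Suppose the statement fails. Then there are $\tau>0$ and sequences $X_j, Y_j$ of compact $\RCD(K,N)$ spaces satisfying the diameter and total-measure bounds with $D$, together with maps $\psi_j:X_j\to Y_j$ satisfying the heat kernel comparison with $\epsilon_j\to 0$, such that no $\psi_j$ is a $\tau$-mGH approximation. Here a $\tau$-mGH approximation means that $\psi_j$ is $\tau$-almost isometric and $\tau$-almost surjective, and that $(\psi_j)_\sharp\meas_{X_j}$ is $\tau$-close to $\meas_{Y_j}$ in a weak sense, for instance through a distance on measures, or after passing to a limit.

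Since $\RCD(K,N)$ spaces with $\diam\le D$ and $\meas\in[D^{-1},D]$ form a precompact family in the measured Gromov--Hausdorff topology, I will pass to subsequences with $X_j\to X$ and $Y_j\to Y$ in mGH, where $X$ and $Y$ are compact $\RCD(K,N)$ spaces. Stability of the RCD condition is used here, and the bound $\meas\ge D^{-1}$ guarantees that the limit measures are nontrivial.

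The main step is to produce a limit map $\psi:X\to Y$. I will fix a countable dense set $\{x^k\}\subset X$ and choose $x^k_j\in X_j$ converging to $x^k$. Because $Y_j$ is uniformly bounded, a diagonal argument gives $\psi_j(x^k_j)\to y^k\in Y$ for all $k$.

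To extend this to all of $X$ I need equicontinuity, which I will get from the kernel comparison together with Varadhan's asymptotics made uniform. The Gaussian estimate (\ref{eq-Gaussian-estimate}) with uniform constants, together with Bishop--Gromov volume bounds, gives for $t\le 1$
\[
\bigl|\,4t\log p(x,\tilde x,t)+\dist(x,\tilde x)^2\bigr|\le \eta(t) \quad\text{uniformly in } j,
\]
with $\eta(t)\to 0$ as $t\to0$. Two ingredients drive this: the $(1\pm\epsilon)$ in the exponent, and the fact that $4t\log\meas(B_{\sqrt t}(x))\to0$ uniformly, which holds because $\meas(B_{\sqrt t})\ge c\,t^{N/2}$. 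Since $\log$ of the kernel ratio is bounded by $\log(1\pm\epsilon_j)$ for every $t\in[\epsilon_j,1]$, I can fix a small $t$ and let $j\to\infty$. This yields
\[
\bigl|\dist(\psi_j x,\psi_j\tilde x)^2-\dist(x,\tilde x)^2\bigr|\le 2\eta(t)+4t\log(1+\epsilon_j),
\]
so $\psi_j$ is $\varepsilon(\epsilon_j)$-almost distance preserving. Consequently the limit $\psi$ is an isometric embedding on the dense set and extends to an isometric embedding $X\to Y$.

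By the continuity of heat kernels under convergence (\ref{eq:heat conv}), and since the comparison interval $[\epsilon_j,1]$ exhausts $(0,1]$, $\psi$ preserves the heat kernels for $t\in(0,1]$. I then extend this to all $t>0$ by the semigroup property, or by analyticity in $t$ of the expansion (\ref{eq:heat exp}). The analyticity route needs some care, since the two spectra differ. The semigroup route also needs care, because the Chapman--Kolmogorov integral is over the whole space and is not a priori transported by $\psi$.

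The cleanest fix is to rerun the proof of Theorem \ref{thm:global rigidity} using small $t$ only. I will integrate the equality of the kernels over $\tilde x$ to obtain
\[
\int_X p_Y(\psi x,\psi\tilde x,t)\,\di\meas_X(\tilde x)=1 \quad\text{for all } t\le 1,
\]
that is, $h^Y_t\nu(\psi x)=1$ where $\nu=\psi_\sharp\meas_X$. Letting $t\to0$ forces the density of $\nu$ relative to $\meas_Y$ to equal $1$ on $\psi(X)$, and the weak-limit form of this argument gives $\nu\le\meas_Y$ locally. Then, as in \cite[Lem. 2.3]{H24}, analyticity of $t\mapsto h_t^Y\nu$ on $(0,\infty)$ should give $h_t\nu\equiv 1$ on $\psi(X)$ for all $t$, and the $t\to\infty$ limit yields $\meas_X(X)=\meas_Y(Y)$. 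Combined with $\nu\le\meas_Y$ and equal total mass, this gives $\nu=\meas_Y$, so $\psi$ is surjective and measure preserving.

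This step, establishing that $\psi$ is a global metric measure isometry from heat kernel equality on a finite time window, is the one I expect to be the main obstacle. Once it is in hand, mGH convergence shows that $\psi_j$ is a $\tau$-mGH approximation for large $j$, contradicting the choice of the sequence.
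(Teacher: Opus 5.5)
Your argument is essentially the paper's: contradiction, mGH compactness, a limit map built from uniform Gaussian/Varadhan bounds (as in Step 2 of Proposition \ref{prop:weak}), heat-kernel preservation in the limit via (\ref{eq:heat conv}), and then rigidity. The detour at the end is unnecessary: $t\mapsto p_X(x,\tilde x,t)$ and $t\mapsto p_Y(\psi(x),\psi(\tilde x),t)$ are each real-analytic on $(0,\infty)$ by (\ref{eq:heat exp}) and (\ref{eq:eigenfunction-estimate}), so equality on $(0,1]$ extends to all $t>0$ however the spectra differ, and Theorem \ref{thm:global rigidity} then applies directly, as in the paper (your variant uses this same analyticity anyway, and its step $\nu\le\meas_Y$ needs the lower Gaussian bound rather than a weak-limit argument).
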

\begin{proof}
    Though this is done by a standard contradiction argument together with the continuity of the heat kernels with respect to mGH convergence (\ref{eq:heat conv}) and Theorem \ref{thm:global rigidity}, let us provide a sketch of the proof for readers' convenience.

    Assume that the assertion is not satisfied. Then there exists sequences of compact $\RCD(K,N)$ spaces $X_i, Y_i$ with $\mathrm{diam}(X_i)\le D$ and $\mathrm{diam}(Y_i)\le D$, $\tau>0$ and a sequence of maps $\psi_i:X_i \to Y_i$ such that $\psi_i$ is not a $\tau$-mGH approximation, and that 
    \begin{equation}
        1-\epsilon_i \le \frac{p_{X_i}(x, \tilde x, t)}{p_{Y_i}(\psi_i(x), \psi_i(\tilde x), t)}\le 1+\epsilon_i,\quad \text{for all $i \in \mathbb{N}$, $x, \tilde x \in X_i$ and $t \in [\epsilon_i ,1]$,}
    \end{equation}
    for some $\epsilon_i \to 0$. Thanks to the compactness of $\RCD(K, N)$ spaces with respect to mGH convergence (e.g. \cite{AGS2, EKS, GMS, LV, St1}, see also the end of the next subsection), after passing to a subsequence, with no loss of generality, we can assume $X_i \stackrel{\mathrm{mGH}}{\to} X, Y_i \stackrel{\mathrm{mGH}}{\to} Y$ for some compact $\RCD(K, N)$ spaces $X, Y$.
    Applying Theorem \ref{thm:gaussian}, after passing to a subsequence again, we can find a uniform limit map $\psi:X \to Y$ of $\psi_i$ preserving the heat kernels (see step 2 of the proof of (1) of Proposition \ref{prop:weak}). Thus Theorem \ref{thm:global rigidity} shows that $\psi$ is a metric measure isometry. In particular $\psi_i$ is $\delta_i$-mGH approximation for some $\delta_i \to 0$, which is a contradiction, where we note that the above is also justified in the case when $X$ or $Y$ is a single point (see also \cite[Sec. 6]{H24} and the proof of Lemma \ref{lem:surjective}).
\end{proof}

\subsection{Structure theory}\label{subsec:structure}
Let $(X, \dist_X, \meas_X)$ be an $\RCD(K, N)$ space for some $K \in \mathbb{R}$ and some $N \in [1, \infty)$. A pointed complete metric space $(Y, \dist_Y, y)$ is said to be a metric tangent cone at $x \in X$ of $X$ if there exists $r_i \to 0^+$ such that $(X, r_i^{-1}\dist_X, x)$ pointed Gromov-Hausdorff (pGH) converge to $(Y, \dist_Y, y)$. Moreover a (measured) tangent cone $(Y, \dist_Y, \meas_Y, y)$ is similarly defined by satisfying $(X, r_i^{-1}\dist_X, \meas_X(B_{r_i}(x))^{-1}\meas_X, x) \to (Y, \dist_Y, \meas_Y, y)$ in the pointed measured Gromov-Hausdorff (pmGH) convergence, where pGH and pmGH topologies are metrizable by $\dist_{\mathrm{pGH}}$ and $\dist_{\mathrm{pmGH}}$, resp.. Denote by $\mathrm{Tan}_{\mathrm{met}}(X, x)$ (or $\mathrm{Tan}(X, x)$, resp.) the set of all isometry classes of tangent cones (or metric tangent cones, resp.) at $x \in X$.
It is worth mentioning that $\mathrm{Tan}_{\mathrm{met}}(X, x)\neq \emptyset$, $\mathrm{Tan}_{\mathrm{met}}(X, x)\neq \emptyset$, and any element of $\mathrm{Tan}_{\mathrm{met}}(X, x)$ is an $\RCD(0, N)$ space, thus we can apply the splitting theorem \cite{G13} to all tangent cones. In particular, if the metric tangent cone at $x$ is unique as $\mathbb{R}^n$, in other words, $\mathrm{Tan}_{\mathrm{met}}(X, x)=\{(\mathbb{R}^n, 0_n)\}$, then the uniqueness is also valid in the sense of the measured ones;
\begin{equation}\label{eq:regular}
  \mathrm{Tan}_{\mathrm{met}}(X, x)=\{(\mathbb{R}^n, \omega_n^{-1}\mathcal{H}^n, 0_n)\},  
\end{equation}
where we note that the converse implication is trivial.
A point $x \in X$ is said to be $n$-regular if (\ref{eq:regular}) holds. Denote by $\mathcal{R}_{X, n}$, or $\mathcal{R}_n$ for short, the set of all $n$-regular points of $X$. 

We are ready to introduce general local structure results of $X$ (see \cite{AHT, BS, D}).
\begin{theorem}[General structure result]\label{thm:rcd structure}
    Assume that $X$ is not a single point. Then we have the following.
    \begin{enumerate}
    \item{(Non-branching)} $X$ is proper geodesic space, and any geodesic cannot be branched at any interior point \cite{D}. Furthermore, defining the \textit{cut locus} at $x \in X$ by
\begin{equation}
    C_x=\{y \in X\,|\, \dist(x, y)+\dist(y,z)>\dist(x, z)\, \text{for any $z \neq y$}\},
\end{equation}
then we have $\meas(C_x)=0$. Moreover if $r_x=r_y$ for some $x, y \in X$, where $r_x$ is the distance function from $x$, on a Borel subset $A$ of $X$ with $\meas(A)>0$, then $x=y$ \cite{Ye}.
    \item{(Dimension)} There exists a unique $n \in \mathbb{N} \cap [1, N]$, called the essential (or rectifiable) dimension of $X$, denoted by $\mathrm{dim}(X)$, such that $\meas(X \setminus \mathcal{R}_n)=0$ holds \cite{BS}.
        \item{(Convexity)} $\mathcal{R}_n$ is weakly convex \cite{D}. 
        \item{(Rectifiability)} $X$ is strongly (metric measure) $n$-rectifiable \cite{MN} with \cite{DMR, GPe, KM}.
        \item{(Relation with Hausdorff measure)} Denoting by $\mathcal{R}_n^*$, called the $n$-dimensional reduced regular set, the set of all points $x \in \mathcal{R}_n$ satisfying that the following limit exists as a positive finite number;
        \begin{equation}\label{eq:radon}
            \lim_{r \to 0^+}\frac{\meas(B_r(x))}{\omega_nr^n}\in \mathbb{R}_+,
        \end{equation}
        one has $\meas (X \setminus \mathcal{R}_n^*)=0$. Furthermore, $\meas$ and $\mathcal{H}^n$ are mutually absolutely continuous on $\mathcal{R}_n^*$, and the limit (\ref{eq:radon}) coincides with the Radon-Nikodym derivative $\frac{\di\meas}{\di\mathcal{H}^n}$ on $\mathcal{R}_n^*$ \cite{AHT}.
    \end{enumerate}
\end{theorem}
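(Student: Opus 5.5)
The theorem collects known structural facts, so the plan is to reduce each item to the cited literature, adding only the short arguments needed to connect the statements as formulated here. I will treat the items in the order listed, using throughout that $X$ is $\RCD(K,N)$ with $N<\infty$, hence $\mathrm{CD}(K,N)$ and in particular $\mathrm{MCP}(K,N)$.

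For (1), properness follows from the Bishop--Gromov inequality: it gives local doubling, so closed balls are totally bounded and hence compact. The geodesic property comes from the length structure of $\mathrm{CD}$ spaces, with existence of minimizers then following from properness. Non-branching at interior points is exactly Deng's theorem \cite{D}. To show $\meas(C_x)=0$, I will use that for $\meas$-a.e.\ $y$ the unique geodesic from $x$ to $y$ is extendable beyond $y$. Concretely, by $\mathrm{MCP}(K,N)$ and the essential non-branching of \cite{D}, the contraction maps $\Phi_s$ sending $y$ to the point at fraction $s$ along a geodesic from $x$ satisfy $\meas(\Phi_s(A))\ge c_s\meas(A)$. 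The points lying in the interior of some geodesic from $x$ contain $\bigcup_{s<1}\Phi_s(X)$, and letting $s\to1$ shows this set has full measure. Any such point $y$ is not in $C_x$, since it lies on a geodesic $x\to z$ with $z\ne y$. The final assertion, that $r_x=r_y$ on a set of positive measure forces $x=y$, I will simply quote from \cite{Ye}. The idea there is that on a density point of the set, $\nabla r_x=\nabla r_y$ a.e., and following gradient flow lines backwards (which are geodesics toward $x$, resp.\ $y$) together with non-branching forces the two base points to coincide.

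Items (2) and (3) are quoted: constancy of dimension is Bru\`e--Semola \cite{BS}, and weak convexity of $\mathcal{R}_n$ is again from \cite{D}. For (3) I will only check that the notion of $\epsilon$-geodesic used there matches the definition of weak convexity given in the introduction. Item (4) is Mondino--Naber \cite{MN}, which provides bi-Lipschitz charts covering almost all of $X$. Combined with the absolute continuity $\meas\ll\mathcal{H}^n$ on the regular set (proved in \cite{DMR, GPe, KM}), this upgrades to strong metric measure rectifiability. For (5) I will cite \cite{AHT}. There, the density $\lim_{r\to0}\meas(B_r(x))/(\omega_n r^n)$ is shown to exist and be positive and finite $\meas$-a.e.\ using the rectifiable charts from (4) and the absolute continuity just mentioned. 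The identification of this limit with $\di\meas/\di\mathcal{H}^n$ then follows from the standard differentiation theorem for rectifiable measures.

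The only step that is more than a citation is $\meas(C_x)=0$, so I expect it to be the main obstacle. The issue is that the measure-contraction estimate must hold along a measurable selection of geodesics from $x$, which is exactly what essential non-branching guarantees. If writing this out in detail proves cumbersome, I would instead quote the corresponding statement from the literature on optimal transport in essentially non-branching $\mathrm{MCP}$ spaces.
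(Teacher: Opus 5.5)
Your proposal is correct and follows the paper's route: the paper gives no proof of its own and simply assembles these facts from \cite{D}, \cite{Ye}, \cite{BS}, \cite{MN} with \cite{DMR, GPe, KM}, and \cite{AHT}. Your extra MCP argument for $\meas(C_x)=0$ is sound, with two caveats. First, the MCP plan from $\delta_x$ already supplies the geodesics, so no measurable selection is needed. Second, for noncompact $X$ you should run the argument on balls $B_R(x)$, where $c_s\to 1$ uniformly. Your account of Ye's idea is only a heuristic, since $A$ may have empty interior and flow lines leave it immediately; note that the paper only uses injectivity of $x\mapsto r_x$ on open sets, where it gives an elementary non-branching argument in the proof of Proposition~\ref{isometry-homeo}.
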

It is worth mentioning that in general $\dim (X)$ does not coincides with the Hausdorff dimension \cite{PW} (see also \cite{DHPW}), and that $t^{\frac{n}{2}}p(x,x, t)$ converge to the inverse of (\ref{eq:radon}), up to a multiplication of a positive constant depending only on $n$ for any $x \in \mathcal{R}_n^*$ (see (\ref{short-time-diagonal}) and \cite{AHT}), and that $X$ is non-collapsed if and only if $\mathrm{dim}(X)=N$, up to a multiplication of a positive constant to the reference measure $\meas$, see \cite{BGHZ, DG,H20}.

Finally let us end this subsection by noticing compactness results for $\RCD(K, N)$ spaces (though we already used this in the proof of Corollary \ref{cor:global rigidity}). If a sequence of pointed $\RCD(K, N)$ spaces has uniform two-sided positive bounds on the volumes of the balls of radius $1$ centered at the base points, then the sequence has a pmGH convergent subsequence to a pointed $\RCD(K, N)$ spaces, where an analogue of non-collapsed ones also exists, see \cite{DG, EKS, GMS} for the details.

\section{Wave equation on RCD spaces} \label{sec-wave}

Let $(X,\dist,\meas)$ be a compact $\RCD(K,N)$ space for some $K \in \mathbb{R}$ and some $N \in [1, \infty)$.
We consider the existence and uniqueness of solutions to the following wave equation:
\begin{equation} 
\label{eq-wave-intro}
\left\{ \begin{aligned}
&(\partial_t^2-\Delta_X) u(x,t)=f(x,t), \quad x\in X, \; t\in [0,T] ,\\
& u(x,0)= \psi_0(x), \\
& \partial_t u(x,0)=\psi_1(x),
\end{aligned} \right.
\end{equation}
where $\Delta_X$ is the Laplacian of $(X,\dist,\meas)$ defined in \eqref{eq:laplacian definition}.
Let us recall that the Laplacian $\Delta_X$ corresponds to the Neumann Laplacian in the special case of smooth Riemannian manifolds with (convex) boundary.
First, we define a weak formulation of the wave equation.
Recall that $L^{\infty}([0,T];H^{1,2}(X))$ is the space of functions $u:[0,T]\to H^{1,2}(X)$ satisfying ${\rm ess \,sup}_{t\in [0,T]} \|u(t)\|_{H^{1,2}(X)}<\infty$.
The space ${\rm Lip} ([0,T]; L^{2}(X))$ is the space of functions such that $u:[0,T]\to L^{2}(X)$ is Lipschitz.
For the weak formulation, we consider the solution in the regularity class
\begin{equation}
E([0,T];X):= L^{\infty}([0,T]; H^{1,2}(X)) \cap {\rm Lip} ([0,T]; L^2(X)).
\end{equation}
As $L^2(X,\meas)$ is reflexive so satisfies the Radon-Nikodym property (e.g. \cite[Cor. 1.2.7]{ABHN}), any function in ${\rm Lip} ([0,T]; L^{2}(X))$ is absolutely continuous and differentiable a.e. in the strong sense, and in particular, its derivative coincides with the weak derivative.

\begin{definition}[Weak formulation] \label{weak-formulation}
For $\psi_0\in H^{1,2}(X)$, $\psi_1\in L^2(X)$, $f\in L^1([0,T];L^2(X))$, we say that a function
\begin{equation} \label{energy-class}
u\in E([0,T];X)
\end{equation}
is a weak solution of the wave equation \eqref{eq-wave-intro}, if $u|_{t=0}=\psi_0$ and
\begin{equation} \label{def-weak}
\int_0^T \int_X \Big(-\partial_t u\, \partial_t v+ \langle \nabla u,\nabla v\rangle \Big) \di\meas \di t= \int_0^T \int_X f v +\int_X v|_{t=0} \psi_1,
\end{equation}
for any test function $v\in E([0,T];X)$ satisfying $v|_{t=T}=0$.
\end{definition}

Note that $u\in E([0,T];X)$ implies that $u\in C([0,T];L^{2}(X))$ which justifies the condition $u|_{t=0}=\psi_0$, and $u$ is weakly differentiable with respect to $t$ with $\partial_t u \in L^{\infty}([0,T];L^2(X))$.

\begin{proposition} \label{prop-regularity}
Let $(X,\dist,\meas)$ be a compact {\rm RCD}$(K,N)$ space for some $K \in \mathbb{R}$ and $N \in [1, \infty)$.
Let $T>0$, $f \in C([0,T]; L^2(X))$, $\psi_0\in  H^{1,2}(X)$ and $\psi_1\in L^2(X)$. Then there exists a unique weak solution
$$u\in L^{\infty}([0,T]; H^{1,2}(X)) \cap {\rm Lip} ([0,T]; L^2(X)),$$
with the energy estimate
$$\sup_{0\leq t \leq T} \|u(t)\|_{H^{1,2}} + \sup_{0\leq t,s \leq T,\, t\neq s} \frac{\|u(t)-u(s)\|_{L^2}}{|t-s|}  \leq C(T) \Big( \|f\|_{L^2(X\times [0,T])} + \|\psi_0\|_{H^{1,2}(X)} + \|\psi_1\|_{L^2(X)} \Big).$$
\end{proposition}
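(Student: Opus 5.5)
The plan is to solve the problem explicitly by spectral decomposition, using the spectral datum $\{\phi_i\}$ of $X$ (a complete orthonormal basis of $L^2(X)$ with $\Delta\phi_i+\lambda_i\phi_i=0$, $0=\lambda_0<\lambda_1\le\cdots\to\infty$). The key fact is that $\|\nabla g\|_{L^2}^2=\sum_i\lambda_i|\langle g,\phi_i\rangle|^2$ for $g\in H^{1,2}(X)$. This holds because the $\phi_i$ are also orthogonal in $H^{1,2}$ by \eqref{eq:laplacian definition}, and because $H^{1,2}(X)$ is exactly the form domain of the nonnegative self-adjoint operator $-\Delta$ (infinitesimal Hilbertianity). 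So $H^{1,2}$ is identified with sequences $(a_i)$ satisfying $\sum(1+\lambda_i)|a_i|^2<\infty$.

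\textbf{Existence.} Write $a_i^0=\langle\psi_0,\phi_i\rangle$, $a_i^1=\langle\psi_1,\phi_i\rangle$ and $f_i(t)=\langle f(t),\phi_i\rangle$; the latter is continuous in $t$. Define
$$u_i(t)=a_i^0\cos(\sqrt{\lambda_i}t)+a_i^1\frac{\sin(\sqrt{\lambda_i}t)}{\sqrt{\lambda_i}}+\int_0^t\frac{\sin(\sqrt{\lambda_i}(t-s))}{\sqrt{\lambda_i}}f_i(s)\di s,$$
with the obvious $t$-polynomial formula for $i=0$, and set $u=\sum u_i\phi_i$. Each $u_i$ is $C^1$, solves $u_i''+\lambda_iu_i=f_i$, and satisfies the mode energy identity
$$\tfrac{d}{dt}\big(|u_i'|^2+\lambda_i|u_i|^2\big)=2f_iu_i'.$$
Summing over a finite set of modes and applying Gronwall gives a bound on $\sum(|u_i'|^2+\lambda_i|u_i|^2)(t)$ uniform in $t\in[0,T]$ and in the truncation. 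The low mode $u_0$ is bounded directly. Since $\sqrt{T}\,\|f\|_{L^2(X\times[0,T])}$ dominates $\|f\|_{L^1([0,T];L^2)}$, we obtain
$$\sup_t\|u(t)\|_{H^{1,2}}+\sup_t\|\partial_tu(t)\|_{L^2}\le C(T)\big(\|f\|_{L^2(X\times[0,T])}+\|\psi_0\|_{H^{1,2}}+\|\psi_1\|_{L^2}\big).$$
The partial sums converge in $C([0,T];L^2)$ and their time derivatives are uniformly bounded in $L^2$. Hence $u\in{\rm Lip}([0,T];L^2)$ with Lipschitz constant $\sup_t\|\partial_tu\|_{L^2}$, which gives the difference-quotient part of the estimate. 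Boundedness in $H^{1,2}$ is preserved in the limit, so $u\in E([0,T];X)$. To verify \eqref{def-weak}, take a test $v\in E$ with $v(T)=0$ and write $v=\sum v_i(t)\phi_i$, where each $v_i$ is Lipschitz in $t$. For each mode, integration by parts in $t$ gives
$$\int_0^T(-u_i'v_i'+\lambda_iu_iv_i)\di t=\int_0^Tf_iv_i\di t+a_i^1v_i(0).$$
Summing in $i$ is justified by Cauchy–Schwarz using the $H^{1,2}$ and Lipschitz bounds on $u$ and $v$. The identity $\int\langle\nabla u,\nabla v\rangle=\sum\lambda_iu_iv_i$ comes from the $H^{1,2}$ orthogonality above.

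\textbf{Uniqueness.} By linearity it suffices to show that a weak solution $w\in E$ with $f=0$, $\psi_0=\psi_1=0$ vanishes. Testing against $v=\bar v(t)\phi_i$, with $\bar v$ Lipschitz and $\bar v(T)=0$, shows that $w_i(t)=\langle w(t),\phi_i\rangle$ is a Lipschitz weak solution of $w_i''+\lambda_iw_i=0$ on $[0,T]$ with $w_i(0)=0$. Choosing test functions $\bar v$ not vanishing at $0$ yields the natural condition $w_i'(0)=0$ in the weak sense. The scalar ODE then gives $w_i\equiv0$ for every $i$, and completeness of $\{\phi_i\}$ gives $w=0$.

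\textbf{Main obstacle.} The delicate point is the scalar ODE uniqueness for a merely Lipschitz weak solution with the initial velocity encoded only weakly. I would handle it with Duhamel or a mollification argument, or via the classical Lions–Magenes test function $v(t)=\int_t^s w(\tau)\di\tau$ on $[0,s]$. In our case this test function is admissible since it lies in $E$ when $w\in E$; it yields a direct energy identity $\|w(s)\|_{L^2}^2+\|\nabla\int_0^sw\|^2=0$ without any mode decomposition. I would use this Lions–Magenes argument as the primary route for uniqueness, and the mode-wise argument as a fallback.
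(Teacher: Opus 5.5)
Your proposal is correct, and its existence part is the paper's construction: the same spectral Duhamel formula, with $s_j'=\cos(\sqrt{\lambda_j}\,t)$ in place of your cosine term. There are two cosmetic differences in the existence part.
\begin{itemize}
\item You get the energy bound from the mode energy identity $\frac{d}{dt}\big(|u_i'|^2+\lambda_i|u_i|^2\big)=2f_iu_i'$. The paper uses the explicit bounds $|s_j|\le t$, $|s_j'|\le 1$, $\lambda_j s_j^2\le 1$.
\item You verify \eqref{def-weak} by summing the scalar identities via Parseval. The paper proves it for the truncations $u^J$ and passes to the limit.
\end{itemize}
One small point if you want the constant to depend on $T$ alone, as written. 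The energy controls $\sum_i\lambda_i|u_i|^2$, not $\sum_{i\ge 1}|u_i|^2$, unless you pay a factor $\lambda_1^{-1}$. So it is not only the mode $i=0$ that needs separate treatment. Bound $\|u(t)\|_{L^2}\le\|\psi_0\|_{L^2}+t\sup_s\|\partial_t u(s)\|_{L^2}$ instead.

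The real difference is in uniqueness. The paper tests with $v_{j,\eta}=\phi_j\,s_j(\eta-t)$ on $[0,\eta]$, extended by $0$ for $t>\eta$. One integration by parts in $t$, together with $s_j''=-\lambda_j s_j$, $s_j(0)=0$ and $s_j'(0)=1$, gives $\langle u(\eta),\phi_j\rangle_{L^2}=0$ at once. This is exactly your mode-wise fallback, with $\bar v$ chosen as the backward fundamental solution of the scalar ODE. That choice makes your announced ``main obstacle'' disappear.

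Even with general $\bar v$ there is no real difficulty. Testing with $\bar v\in C_c^\infty(0,T)$ shows $w_i''=-\lambda_i w_i\in L^\infty$, so $w_i\in C^{1,1}$ is a classical solution. The boundary term at $t=0$ then forces $w_i'(0)=0$.

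Your primary Lions--Magenes route is different in kind. The function $v(t)=\int_t^s w\,d\tau$ for $t\le s$, and $v=0$ for $t>s$, lies in $E([0,T];X)$ with $v(T)=0$, so it is admissible. Two identities then give $\|w(s)\|_{L^2}^2+\|\nabla\!\int_0^s w\|_{L^2}^2=0$:
\begin{itemize}
\item $-\int_0^s\!\int_X\partial_t w\,\partial_t v=\frac12\|w(s)\|_{L^2}^2$, since $w$ is Lipschitz into $L^2$ with $w(0)=0$;
\item $\int_0^s\!\int_X\langle\nabla w,\nabla v\rangle=\frac12\|\nabla v(0)\|_{L^2}^2$, since $v\in W^{1,\infty}([0,s];H^{1,2}(X))$ and $t\mapsto\|\nabla v(t)\|^2$ is absolutely continuous.
\end{itemize}
This argument uses no eigenfunctions, so it would survive without compactness or discrete spectrum. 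The paper's duality argument is shorter, since the eigenbasis is already in hand from the existence part.
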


\begin{proof}
We mainly follow \cite[Sec. 2.3]{KKL} to prove the existence of weak solutions through Galerkin approximations. For $j\in \mathbb{N}$, let $\lambda_j \geq 0$ be eigenvalues of $-\Delta_X$, and $\phi_j$ be the corresponding eigenfunctions that form an orthonormal basis of $L^2(X,\meas)$. In particular, $\lambda_0=0$ and $\phi_0=\textrm{const}$.
For $\psi_0\in H^{1,2}(X)$, we take its Fourier expansion with respect to the basis,
\begin{equation}
\psi_0(x)=\sum_{j=0}^{\infty} \psi_{0,j} \phi_j(x), 
\end{equation}
where $\psi_{0,j}=\int_X \psi_0 \phi_j \di\meas$.
We consider functions of the form
\begin{equation}\label{def-u}
u(x,t)=\sum_{j=0}^{\infty} u_j(t) \phi_j(x),
\end{equation}
with
\begin{equation} \label{eq-uj}
u_j(t) =\psi_{0,j} s_j'(t)+\psi_{1,j} s_j(t) + \int_0^t f_j(\tau) s_j(t-\tau) \di\tau,
\end{equation} 
where $\psi_{0,j},\psi_{1,j},f_j(\tau)$ are the Fourier coefficients of $\psi_0,\psi_1,f(\tau)$ in the wave equation \eqref{eq-wave-intro} by taking $L^2$-inner product with $\phi_j$, and
\begin{equation} \label{eq-sj}
s_j(t) = \left\{ \begin{aligned}
&\frac{\sin(\sqrt{\lambda_j} t)}{\sqrt{\lambda_j}}, \quad & j\geq 1, \\
& t, \quad & j=0.
\end{aligned} \right.
\end{equation}
We will show that the function $u$ in \eqref{def-u} is well-defined in $L^{\infty}([0,T]; H^{1,2}(X)) \cap {\rm Lip} ([0,T]; L^2(X))$, and is a weak solution of the wave equation \eqref{eq-wave-intro}.
The idea is to consider the function
\begin{equation}
u^J(x,t):=\sum_{j=0}^J u_j(t) \phi_j(x),
\end{equation}
and use it to approximate the function $u$ as $J\to \infty$.

\medskip
{\bf Step 1.} We show that $u^J \in L^{\infty}([0,T]; H^{1,2}(X)) \cap {\rm Lip} ([0,T]; L^2(X))$ satisfies the desired energy estimate, and $u^J$ satisfies the integral identity \eqref{def-weak}.

\smallskip
First, we verify that $u^J$ is in the energy class.
Since $\phi_j\in D(\Delta_X)$ form an orthonormal basis of $L^2(X,\meas)$, 
by the definition and linearity of $\Delta_X$, for any fixed $t\in [0,T]$,
\begin{eqnarray} \label{eq-H1N}
\|u^J(t)\|^2_{H^{1,2}(X)}&=& \|u^J(t)\|^2_{L^2} + \int_X \langle \nabla u^J(t), \nabla u^J(t) \rangle \di\meas \nonumber \\
&=& \sum_{j=1}^{J} u^2_j - \int_X u^J \Delta_X u^J \di\meas = \sum_{j=1}^J (1+\lambda_j) u_j^2(t).
\end{eqnarray}
Using the specific form of $u_j$ in \eqref{eq-uj}, and $|s_j(t)|\leq t \leq T$, $|s'_j(t)|\leq 1$, and $\lambda_j s_j^2 \leq 1$ for $j\geq 1$, we have
\begin{eqnarray} \label{eq-H1N-estimate}
\|u^J(t)\|^2_{H^{1,2}(X)} &\leq & C(T) \sum_{j=0}^J (1+\lambda_j) \Big(\psi_{0,j}^2 + \psi_{1,j}^2 s_j^2 +\int_0^t f^2_{j}(\tau) s_j^2(t-\tau) \di\tau \Big) \nonumber \\
&=& C(T) \sum_{j=0}^J \Big(\psi_{0,j}^2 + \psi_{1,j}^2 s_j^2 +\int_0^t f^2_{j}(\tau) s_j^2 \di\tau \Big)  + C(T)\sum_{j=1}^J \lambda_j \Big(\psi_{0,j}^2 + \psi_{1,j}^2 s_j^2 +\int_0^t f^2_{j}(\tau) s_j^2 \di\tau \Big) \nonumber \\
&\leq & C(T) \Big( \|\psi_0\|^2_{H^{1,2}(X)} +  \|\psi_1\|^2_{L^2(X)} + \|f\|^2_{L^2(X\times [0,T])} \Big),
\end{eqnarray}
where we have used that, see e.g. \cite[Prop. 4.5]{H18},
\begin{equation} \label{def-H1-norm}
\|\psi_0\|^2_{H^{1,2}(X)}=\sum_{j=0}^{\infty} (1+\lambda_j) \psi_{0,j}^2.
\end{equation}

For any $t_1,t_2 \in [0,T]$, 
\begin{eqnarray*}
\|u^J(t_1) -u^J(t_2)\|^2_{L^2(X)} \leq \Big\| \sum_{j=0}^J \big( u_j(t_1)-u_j(t_2) \big) \phi_j \Big\|^2_{L^2(X)} =\sum_{j=0}^J |u_j(t_1)-u_j(t_2)|^2.
\end{eqnarray*}
Using the specific form of $u_j$ in \eqref{eq-uj} and $u_j(t)\in C^1 ([0,T])$, we see
\begin{eqnarray} \label{eq-time}
|u_j(t_1)-u_j(t_2)|^2
\leq C(T) \Big( \lambda_j \psi^2_{0,j} + \psi^2_{1,j} + \int_0^T |f_j(\tau)|^2 \di\tau \Big) |t_1-t_2|^2.
\end{eqnarray}
Then for $t_1\neq t_2$, we have
\begin{eqnarray} \label{eq-time-Lip}
\frac{\|u^J(t_1) -u^J(t_2)\|^2_{L^2(X)}}{|t_1-t_2|^2} &\leq& C(T) \sum_{j=0}^J \Big( \lambda_j \psi^2_{0,j} +\psi_{1,j}^2 + \int_0^T |f_j(\tau)|^2 \di\tau \Big) \nonumber \\
&\leq & C(T) \Big( \|\psi_0\|^2_{H^{1,2}(X)} +  \|\psi_1\|^2_{L^2(X)} + \|f\|^2_{L^2(X\times [0,T])} \Big).
\end{eqnarray}
This proves $u^J$ is in the energy class with desired energy estimates.

\smallskip
Next, we verify that $u^J$ is a weak (actually strong) solution of the wave equation.
Since $f\in C([0,T]; L^2(X))$ and $s_j(0)=0$, then $u_j(t)\in C^{2}([0,T])$. Using $s''_j(t)+\lambda_j s_j(t)=0$, we have
\begin{eqnarray*}
(\partial_t^2 u_j) (t) &=& \psi_{0,j} s'''_j(t) + \psi_{1,j} s''_j(t)+ f_j(t) + \int_0^t f_j(\tau) s''_j(t-\tau) \di\tau \\
&=& -\psi_{0,j} \lambda_j s'_j(t) -\psi_{1,j} \lambda_j s_j(t) + f_j(t) -\int_0^t f_j(\tau) \lambda_j s_j(t-\tau) \di\tau \\
&=& -\lambda_j u_j(t) +f_j(t).
\end{eqnarray*}
Hence $u^J$ satisfies the wave equation with source $f^J:=\sum_{j=0}^J f_j(t) \phi_j(x)$ strongly:
\begin{eqnarray*}
(\partial_t^2 u^J)(x,t) = \sum_{j=0}^J \big(\partial_t^2 u_j(t) \big) \phi_j(x) = \sum_{j=0}^J \big(f_j(t)-\lambda_j u_j(t) \big)\phi_j(x)=\Delta_X u^J +f^J.
\end{eqnarray*}
To check the initial conditions, 
$$u^J|_{t=0}=\sum_{j=0}^J \psi_{0,j} \phi_j(x),\quad \partial_t u^J|_{t=0}=\sum_{j=0}^J \psi_{1,j} \phi_j(x).$$
Denote the initial conditions above by $\psi_0^J,\psi_1^J$, respectively.
This shows $u^J$ is a strong solution of the wave equation for the source $f^J$ and initial conditions $\psi_0^J, \psi_1^J$.
To check the integral identity \eqref{def-weak}, we use the definition of $\Delta_X$ and integration by parts. Indeed, for any test function $v\in L^{\infty}([0,T]; H^{1,2}(X)) \cap {\rm Lip} ([0,T]; L^2(X))$ satisfying $v|_{t=T}=0$, we have
\begin{eqnarray*}
\int_0^T (\partial_t^2 u^J) v \,\di t = (\partial_t u^J) v|_{t=0}^T- \int_0^T (\partial_t u^J) (\partial_t v) \di t= - v|_{t=0} \psi_1^J - \int_0^T (\partial_t u^J) (\partial_t v) \di t.
\end{eqnarray*}
This is justified since $(\partial_t u^J) v \in {\rm Lip} ([0,T]; L^2(X))$ due to $u^J\in C^2([0,T];\textrm{Lip}(X))$.

\medskip
{\bf Step 2.} We show that $u^J\to u$ in $L^{\infty}([0,T]; H^{1,2}(X))$, and $(\partial_t u^J)(t) \to (\partial_t u)(t)$ in $L^2(X)$ uniformly for almost all $t$.

\smallskip
Since we have shown in \eqref{eq-H1N-estimate} that the $H^{1,2}$-norm of $u^J$ is uniformly bounded independently of $J$: namely for each $t$,
$$\|u^J(t)\|^2_{H^{1,2}(X)} =\sum_{j=0}^J (1+\lambda_j) u^2_j(t) \leq C(T) \Big( \|\psi_0\|^2_{H^{1,2}(X)} +  \|\psi_1\|^2_{L^2(X)} + \|f\|^2_{L^2(X\times [0,T])} \Big),$$
so let $J\to \infty$, using \eqref{def-H1-norm}, we have $u(t)\in H^{1,2}(X)$ for each $t$ with uniformly bounded norm, i.e., $u\in L^{\infty}([0,T]; H^{1,2}(X))$.
Moreover, as $\psi_{0}\in H^{1,2}(X)$ and $\psi_1\in L^2(X)$,
\begin{eqnarray} \label{eq-H1-uniform}
\sup_{0\leq t\leq T}\|u(t)-u^J(t)\|_{H^{1,2}(X)}&=&\sup_{0\leq t\leq T} \sum_{j=J+1}^{\infty} (1+\lambda_j)u_j(t)^2 \nonumber \\
&\leq & \sup_{0\leq t\leq T} C(T) \sum_{j=J+1}^{\infty} \Big( (1+\lambda_j) \psi_{0,j}^2 + \psi_{1,j}^2 +\int_{0}^t f^2_j(\tau) \di \tau \Big) \nonumber \\
&\leq & C(T) \sum_{j=J+1}^{\infty} \Big( (1+\lambda_j) \psi_{0,j}^2 + \psi_{1,j}^2 +\int_{0}^T f^2_j(\tau) \di\tau \Big) \to 0,
\end{eqnarray}
as $J\to \infty$ by the dominated convergence theorem.
This proves that $u^J\to u$ in $L^{\infty}([0,T]; H^{1,2}(X))$.

\smallskip

Next we consider the convergence of $\partial_t u^J$. Since $u_j(t)\in C^2([0,T])$, using \eqref{eq-time-Lip},
\begin{eqnarray*}
\|(\partial_t u^J)(t)\|^2_{L^2(X)} &=& \Big \| \sum_{j=0}^J u'_j(t) \phi_j(x) \Big\|^2_{L^2} =\sum_{j=0}^J |u'_j(t)|^2 \\
&\leq & C(T) \Big( \|\psi_0\|^2_{H^{1,2}} +  \|\psi_1\|^2_{L^2} + \|f\|^2_{L^2(X\times [0,T])} \Big).
\end{eqnarray*}
This shows
\begin{equation} \label{eq-time-conv}
(\partial_t u^J) (t) \to \sum_{j=0}^{\infty} u'_j(t) \phi_j \in L^2(X), \quad {\rm as}\; J\to \infty,
\end{equation}
in $L^2(X)$ for each $t$. Moreover, by the same consideration as in \eqref{eq-H1-uniform}, the convergence is uniform in $t$.

Since $u^J\in  {\rm Lip}([0,T];L^2(X))$ with uniform Lipschitz constant and $u^J\to u$ in $L^{\infty}([0,T];H^{1,2}(X))$, then $u\in {\rm Lip}([0,T];L^2(X))$.
It follows that $(\partial_t u)(t) \in L^2(X)$ exists for almost all $t$ (see e.g. \cite[Cor. 1.2.7]{ABHN}), namely
\begin{eqnarray}
\lim_{h\to 0}\Big\| \frac{u(t+h)-u(t)}{h} - (\partial_t u)(t) \Big\|_{L^2(X)} = 0.
\end{eqnarray}
This implies that, for almost all $t$,
$$\int_X (\partial_t u) \phi_j \di\meas = \lim_{h\to 0} \int_X \frac{u(t+h)-u(t)}{h} \phi_j \di\meas = \lim_{h\to 0} \frac{u_j(t+h)-u_j(t)}{h} = u'_j(t).$$
Then for almost all $t$, we have
\begin{eqnarray}
(\partial_t u)(t) = \sum_{j=0}^{\infty} \langle \partial_t u,\phi_j \rangle_{L^2} \phi_j(x) = \sum_{j=0}^{\infty} u'_j(t) \phi_j(x).
\end{eqnarray}
Combining with \eqref{eq-time-conv} gives $(\partial_t u^J)(t) \to (\partial_t u) (t)$ in $L^2(X)$ for almost all $t$ uniformly.

\medskip
{\bf Step 3.} Verify that $u$ is a weak solution of the wave equation.

\smallskip
From \textit{Step 1}, $u^J$ satisfies the integral identity
\begin{equation} \label{eq-weak-uN}
\int_0^T \int_X \Big(-\partial_t u^J\, \partial_t v+ \langle \nabla u^J,\nabla v\rangle \Big) \di\meas \di t= \int_0^T \int_X f^J v +\int_X v|_{t=0} \psi_1^J,
\end{equation}
for all test functions $v\in L^{\infty}([0,T]; H^{1,2}(X)) \cap {\rm Lip} ([0,T]; L^2(X))$ satisfying $v|_{t=T}=0$.

Since $u^J(t) \to u(t)$ in $H^{1,2}(X)$ uniformly in $t$ by \textit{Step 2}, 
we see that $\int_X \langle \nabla u^J(t) ,\nabla w \rangle \to \int_X \langle \nabla u(t) ,\nabla w \rangle$ for all $w\in H^{1,2}(X)$ uniformly in $t$. 
As $(\partial_t u^J)(t)\to (\partial_t u)(t)$ in $L^2(X)$ for almost all $t$ uniformly by \textit{Step 2}, and $f^{J} \to f$ in $L^2(X\times [0,T])$, $\psi_1^J \to \psi_1$ in $L^2(X)$, the integral identity \eqref{eq-weak-uN} converges to that for $u$. 
To check the initial values, $u|_{t=0}=\sum_{j=0}^{\infty} \psi_{0,j} \phi_j(x)=\psi_0.$
This concludes the proof for the existence of weak solution.

\medskip
{\bf Uniqueness.} To check the uniqueness of weak solution in the energy class, since $\langle \nabla u, \nabla v \rangle$ is a bilinear form in RCD spaces, it suffices to prove that the solution with $\psi_0=\psi_1=0$ and $f=0$ is $u=0$. 
For $\eta\in [0,T]$, we choose the test functions
\begin{equation} \label{eq-vj}
v_{j, \eta}(t) = \left\{ \begin{aligned}
&\phi_j(x) s_j(\eta-t) , \quad & t\in [0,\eta], \\
& 0, \quad & t\in (\eta,T],
\end{aligned} \right.
\end{equation}
where $s_j(t)$ is defined in \eqref{eq-sj}, and $\phi_j$ are the orthonormalized eigenfunctions of $\Delta_X$.
Note that $v_{j,\eta}\in {\rm Lip}([0,T];D(\Delta_X)) \subset L^{\infty}([0,T]; H^{1,2}(X)) \cap {\rm Lip} ([0,T]; L^2(X))$ and satisfies $v_{j,\eta}|_{t=T}=0$.
Using the weak formulation with test functions $v_{j,\eta}$,
$$\int_0^{\eta} \int_X (-\partial_t u)(\partial_t v_{j,\eta}) \di t \di\meas + \int_0^{\eta} \int_X \langle \nabla u, \nabla v_{j,\eta} \rangle \di t \di\meas=0.$$
Then
\begin{eqnarray*}
-\int_X u(\partial_t v_{j,\eta})\big|_{0}^{\eta} + \int_0^{\eta} \int_X u(\partial^2_t v_{j,\eta})- \int_0^{\eta} \int_X u \Delta_X v_{j,\eta}=0.
\end{eqnarray*}
Since $u|_{t=0}=0$, $(\partial_t v_{j,\eta})|_{t=\eta}=-\phi_j$, $\partial^2_t v_{j,\eta}=-\lambda_j v_{j,\eta}=\Delta_X v_{j,\eta}$, we have $\int_X u(x,\eta) \phi_j(x) \di \meas(x)=0$ for all $j\in \mathbb{N}$. Hence $u(\cdot,\eta)=0$ for all $\eta\in [0,T]$.
\end{proof}



\begin{corollary} \label{coro-wave-extension}
Let $(X,\dist,\meas)$ be a compact {\rm RCD}$(K,N)$ space for some $K \in \mathbb{R}$ and $N \in [1, \infty)$.
Let $T>0$, $f \in C([0,2T]; L^2(X))$, $\psi_0\in  H^{1,2}(X)$, $\psi_1\in L^2(X)$.
Let $u\in E([0,T];X)$ be the weak solution of the wave equation
$$(\partial_t^2-\Delta_X)u=f,\quad u|_{t=0}=\psi_0,\quad \partial_t u|_{t=0}=\psi_1.$$
Suppose that $\partial_t u$ exists at $t=T$ in the strong sense.
Consider the wave equation $(\partial_t^2-\Delta_X)w=f$ on $X\times [T,2T]$ and let $w\in E([T,2T];X)$ be the weak solution with initial conditions $u|_{t=T}$ and $\partial_t u|_{t=T}$ at $t=T$. Then
\begin{equation*} 
u_e=\left\{ \begin{aligned}
&u, \quad t\in [0,T], \\
& w, \quad t\in (T,2T],
\end{aligned} \right.
\end{equation*}
is the weak solution of the wave equation on $X\times [0,2T]$ with initial conditions $\psi_0$ and $\psi_1$ at $t=0$.
\end{corollary}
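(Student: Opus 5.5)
The cleanest route is to avoid gluing weak formulations directly and instead compare with the global solution on $[0,2T]$. By Proposition \ref{prop-regularity} applied on $[0,2T]$ there is a unique weak solution $U\in E([0,2T];X)$ with data $\psi_0,\psi_1$ at $t=0$. It is given by the explicit Fourier series \eqref{def-u}, with coefficients
\[
U_j(t)=\psi_{0,j}s_j'(t)+\psi_{1,j}s_j(t)+\int_0^t f_j(\tau)s_j(t-\tau)\di\tau ,
\]
and the expression for $U_j$ coincides with the $u_j$ of \eqref{eq-uj}. Restricting test functions supported in $[0,T]$ shows that $U|_{[0,T]}$ is a weak solution on $[0,T]$; more simply, the uniqueness part of Proposition \ref{prop-regularity} together with the explicit formula gives $U|_{[0,T]}=u$. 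So it suffices to show $U|_{[T,2T]}=w$. Then $u_e=U$, which is the weak solution on $[0,2T]$.

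For $w$, Proposition \ref{prop-regularity} (after a time translation $t\mapsto t-T$) gives the representation
\[
w_j(t)=a_js_j'(t-T)+b_js_j(t-T)+\int_T^t f_j(\tau)s_j(t-\tau)\di\tau ,
\]
where $a_j=\langle u(T),\phi_j\rangle=u_j(T)$ and $b_j=\langle \partial_tu(T),\phi_j\rangle$. Since $\partial_t u(T)$ exists in the strong $L^2$ sense, pairing the difference quotient with $\phi_j$ gives $b_j=u_j'(T)$.

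The core step is the scalar identity $U_j(t)=w_j(t)$ for $t\in[T,2T]$, i.e. the semigroup property of the Duhamel formula for each mode. Both functions are $C^1$ solutions of the ODE $y''+\lambda_jy=f_j$, in the integrated sense since $f_j\in C([0,2T])$. They share the same value and derivative at $t=T$: $U_j(T)=u_j(T)=a_j$ and $U_j'(T)=u_j'(T)=b_j$. ODE uniqueness then gives $U_j=w_j$. Alternatively, one can verify this directly with the addition formulas $s_j(t)=s_j'(t-T)s_j(T)+s_j(t-T)s_j'(T)$ and the analogous formula for $s_j'$. Since $\{\phi_j\}$ is a complete orthonormal basis, $U(t)=w(t)$ in $L^2$ for every $t\in[T,2T]$, and hence $u_e=U$ as elements of $E([0,2T];X)$.

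The main obstacle I expect is justifying $b_j=u_j'(T)$. A function in $\mathrm{Lip}([0,T];L^2)$ is only a.e. differentiable, which is exactly why the hypothesis that $\partial_tu(T)$ exists strongly is needed. Given strong existence, the limit $\frac{u(T+h)-u(T)}{h}\to\partial_tu(T)$ as one-sided $h\to0^-$ commutes with the continuous functional $\langle\cdot,\phi_j\rangle$. Since $u_j\in C^1$, the left derivative equals $u_j'(T)$, which settles it. A secondary check is that $f\in C([0,2T];L^2)$ makes each $U_j$ of class $C^2$, so the ODE uniqueness argument is classical.
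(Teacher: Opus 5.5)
Your proof is correct, but it follows a different route from the paper's.

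\textbf{The paper's route.} It works at the level of the weak formulation. It glues the Galerkin approximants $u^J$ and $w^J$ into $u_e^J$. The Cauchy data of $w^J$ at $t=T$ are $u^J(T)$ and $\partial_t u^J(T)$; this is where strong existence of $\partial_t u(T)$ enters, through $(\partial_t u(T))^J=\partial_t u^J(T)$. Hence $u_e^J\in C^1([0,2T];D(\Delta_X))$. Integrating by parts in time separately on $(0,T)$ and $(T,2T)$, the junction terms at $t=T$ cancel. Letting $J\to\infty$ then gives \eqref{def-weak} for $u_e$ on $[0,2T]$.

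\textbf{Your route.} You never test the glued function directly. You take the global solution $U$ on $[0,2T]$ and identify $u$ and $w$ with the explicit spectral solutions, using uniqueness on $[0,T]$ and, after translation, on $[T,2T]$. The translation is legitimate since $u(T)\in H^{1,2}(X)$ by the series representation. The gluing then reduces to the flow property of $y''+\lambda_j y=f_j$, mode by mode, and you conclude $u_e=U$.

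\textbf{What each buys.} Your approach needs no limit passage in a glued integral identity. It pins down exactly where the hypothesis on $\partial_t u(T)$ is used, namely the identification $b_j=u_j'(T)$. It also gives the slightly more informative statement that $u_e$ is the global solution $U$ itself. The paper's argument checks the weak identity for the glued object directly, without comparing to a solution on the long interval. The two are closely linked: by the same mode-wise ODE uniqueness you invoke, the paper's $u_e^J$ is exactly the $J$-th Galerkin approximant of your $U$.

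\textbf{Side remark.} Your representation also shows the strong-derivative hypothesis is automatic here, contrary to your comment that it is ``exactly why'' the hypothesis is needed. The series $\sum_j u_j'(t)\phi_j$ converges in $L^2(X)$ uniformly in $t\in[0,T]$, so $u\in C^1([0,T];L^2(X))$.
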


\begin{proof}
This can be argued using the approximate solutions $u^J$ constructed in the proof of Proposition \ref{prop-regularity}. Namely, let $u^J\in C^2([0,T];D(\Delta_X))$ be the approximate solutions for $u$, which satisfy
$$\partial_t^2 u^J-\Delta_X u^J =f^J.$$ 
Taking approximate solutions $w^J$ for $w$ and combining with $u^J$, we see that $u_e^J$ satisfies 
$$\partial_t^2 u_e^J-\Delta_X u_e^J =f^J, \quad \textrm{ for }t\in [0,T)\cup (T,2T].$$
Since the initial conditions for $w$ match $u|_{t=T}, \partial_t u|_{t=T}$, and $(\partial_t u)^J = \partial_t u^J$ if $\partial_t u$ exists, 
then $u_e^J \in C^1([0,2T]; D(\Delta_X))$ so $\partial_t u_e^J \in C([0,2T];D(\Delta_X))$.
Hence, for any test function $v\in E([0,2T];X)$ satisfying $v|_{t=2T}=0$, by splitting the time domain into $(0,T)$ and $(T,2T)$, we see that the corresponding terms at $t=T$ cancel out:
\begin{eqnarray*}
\int_0^{2T} \int_X \langle \nabla u_e^J ,\nabla v \rangle -\int_0^{2T} \int_X f^J v = -\int_0^{2T} \int_X (\partial_t^2 u_e^J)v = \int_0^{2T} \int_X (\partial_t u_e^J)(\partial_t v) +\int_X v|_{t=0} (\partial_t u^J)|_{t=0}.
\end{eqnarray*}
Let $J\to \infty$ and the claim follows.
\end{proof}


\begin{theorem}[Finite speed of propagation]
\label{prop-finite-propagation}
Let $(X,\dist,\meas)$ be a compact {\rm RCD}$(K,N)$ space for some $K \in \mathbb{R}$ and $N \in [1, \infty)$. Consider the wave equation \eqref{eq-wave-intro} with initial conditions $\psi_0\in H^{1,2}(X)$, $\psi_1\in L^2(X)$ and source $f\in C([0,T]; L^2(X))$.
Suppose that for some $x_0\in X$ and $r>0$, $\psi_0|_{B(x_0,r)}=\psi_1 |_{B(x_0,r)}=0$, and $f=0$ on $K(x_0,r):=\{(x,t)\in X\times (0,r): \dist (x,x_0)<r-t\}$.
Then the weak solution $u$ satisfies $u=0$ on $K(x_0,r)$.
\end{theorem}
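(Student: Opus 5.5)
By linearity and the uniqueness part of Proposition \ref{prop-regularity}, the proof will go through a local energy estimate on shrinking balls, carried out for the Galerkin approximations and then passed to the limit. Rather than localizing the solution itself, I would exploit the Lipschitz structure of distance functions on RCD spaces: for $0<t<r$ set $B_t:=B(x_0,r-t)$ and define the localized energy
\begin{equation*}
\mathcal{E}(t)=\frac12\int_{B_t}\big(|\partial_t u|^2+|\nabla u|^2\big)\di\meas .
\end{equation*}
The target is to show $\mathcal{E}(t)\le \mathcal{E}(0)+\int_0^t\int_{B_s}|f\,\partial_t u|\di\meas\di s$. Since $\mathcal{E}(0)=0$ and $f=0$ on $K(x_0,r)$, this gives $\mathcal{E}\equiv0$. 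Then $\partial_t u=0$ a.e.\ on $K(x_0,r)$, and $u|_{t=0}=0$ on $B(x_0,r)$ yields $u=0$ on the cone.

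Since a weak solution is not regular enough to differentiate $\mathcal{E}$ directly, I would use smooth Lipschitz cutoffs instead of sharp balls. For $\epsilon>0$ let $\chi_\epsilon(x,t)=\eta_\epsilon(r-t-\dist(x,x_0))$, where $\eta_\epsilon$ is a nondecreasing Lipschitz function that vanishes on $(-\infty,0]$ and equals $1$ on $[\epsilon,\infty)$. The function $\dist(\cdot,x_0)$ is $1$-Lipschitz, so $|\nabla\dist(\cdot,x_0)|\le1$ a.e.\ by the Sobolev theory recalled in Section \ref{sec-RCD}. This gives $\partial_t\chi_\epsilon=-\eta_\epsilon'$ and $|\nabla\chi_\epsilon|\le \eta_\epsilon'$, which is exactly the unit-speed domination in the cone.

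I would first work with the Galerkin approximations $u^J\in C^2([0,T];D(\Delta_X))$ from Proposition \ref{prop-regularity}. These satisfy $\partial_t^2u^J-\Delta_Xu^J=f^J$ strongly, with data $\psi_0^J,\psi_1^J$. For them $E_\epsilon(t)=\frac12\int\chi_\epsilon(|\partial_tu^J|^2+|\nabla u^J|^2)$ can be differentiated in $t$. Integrating by parts via \eqref{eq:laplacian definition}, applied with the test function $\chi_\epsilon\partial_tu^J\in H^{1,2}$ and using the Leibniz rule for $\langle\nabla\cdot,\nabla\cdot\rangle$, gives
\begin{equation*}
E_\epsilon'(t)=\int\chi_\epsilon f^J\partial_tu^J-\int\partial_tu^J\langle\nabla\chi_\epsilon,\nabla u^J\rangle-\frac12\int\eta_\epsilon'\big(|\partial_tu^J|^2+|\nabla u^J|^2\big).
\end{equation*}
By Cauchy--Schwarz with $|\nabla\chi_\epsilon|\le\eta_\epsilon'$, the middle term is bounded by the last one. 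Hence $E_\epsilon(t)\le E_\epsilon(0)+\int_0^t\int\chi_\epsilon f^J\partial_tu^J$.

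The main obstacle is the approximation step. The truncated data $\psi_0^J,\psi_1^J,f^J$ do not vanish on the ball or cone, so $E_\epsilon(0)$ and the source term are only small in the limit. Here I would use the convergences from Step 2 of the proof of Proposition \ref{prop-regularity}. First, $u^J\to u$ in $L^\infty([0,T];H^{1,2})$. Second, $\partial_tu^J\to\partial_tu$ in $L^2$ for a.e.\ $t$ uniformly, with $\psi_0^J\to\psi_0$ in $H^{1,2}$, $\psi_1^J\to\psi_1$ in $L^2$ and $f^J\to f$ in $L^2(X\times[0,T])$. Since $\chi_\epsilon(\cdot,0)$ is supported in $B(x_0,r)$, where $\psi_0=\psi_1=0$, we get $E_\epsilon(0)\to0$. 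The source term tends to $0$ because $\chi_\epsilon f=0$, as $f$ vanishes on the cone. Passing $J\to\infty$ gives $\int\chi_\epsilon(|\partial_tu|^2+|\nabla u|^2)(t)=0$ for a.e.\ $t$. Letting $\epsilon\to0$ gives $\partial_tu=0$ a.e.\ in $K(x_0,r)$. Integrating in time, using $u\in\mathrm{Lip}([0,T];L^2)$ and the zero initial value on $B(x_0,r)$, then gives $u=0$ on $K(x_0,r)$.
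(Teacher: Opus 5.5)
Your argument is correct, and it takes a genuinely different route from the paper's.

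\textbf{The paper's route.} The paper works with the sharp-ball energy $E^J(t)=\int_{B(x_0,r-t)}(|u^J_t|^2+|\nabla u^J|^2)\,\di\meas$. It proves that $E^J$ is absolutely continuous via the coarea formula. It then writes $\frac{\di}{\di t}E^J$ as a bulk term $G_1$ minus a flux $G_2=\int_{S(x_0,r-t)}(|u^J_t|^2+|\nabla u^J|^2)\,\di|D\chi_{B(x_0,r-t)}|$ through the sphere. Finally it bounds $G_1\le G_2+2\int u^J_t f^J$. That last step uses the Gauss--Green formula for sets of finite perimeter, normal traces, and the boundedness of $|\nabla u^J|$ with respect to the perimeter measure (via capacity and quasi-continuous representatives).

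\textbf{Your route.} You replace $\mathbf{1}_{B(x_0,r-t)}$ by the Lipschitz weight $\chi_\epsilon=\eta_\epsilon(r-t-\dist(\cdot,x_0))$. The flux then becomes the bulk term $-\frac12\int\eta_\epsilon'(|\partial_tu^J|^2+|\nabla u^J|^2)$. The only integration by parts you need is \eqref{eq:laplacian definition} with the test function $\chi_\epsilon\partial_tu^J\in H^{1,2}(X)$, which is a bounded Lipschitz function times an $H^{1,2}$ function. The pointwise domination $|\nabla\chi_\epsilon|\le\eta'_\epsilon=-\partial_t\chi_\epsilon$ takes the place of the trace inequality \eqref{eq-trace}. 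Formally, the paper's flux is the $\epsilon\to0$ limit of your $\eta'_\epsilon$-term, by the coarea formula.

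\textbf{What each buys.} Your approach needs only the Leibniz and chain rules and $|\nabla\dist(\cdot,x_0)|\le1$. It avoids the perimeter and trace machinery and the delicate a.e.-differentiability bookkeeping with $G_1,G_2,H$. It also works verbatim in any infinitesimally Hilbertian space that admits the Galerkin scheme. Nothing is lost: for general data, letting $\epsilon\to0$ with dominated convergence recovers the paper's sharp-ball inequality $E(t)\le E(0)+2\int_0^t\int_{B(x_0,r-\tau)}u_tf$, because $\chi_\epsilon(\cdot,t)\to\mathbf{1}_{\{\dist(\cdot,x_0)<r-t\}}$ pointwise.

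\textbf{One point to tidy.} Either choose $\eta_\epsilon\in C^1$, so that $E_\epsilon$ is genuinely $C^1$, or note that $\meas(\{\dist(\cdot,x_0)=s\})=0$ for all but countably many $s$. With the second option, $E_\epsilon$ is absolutely continuous with the stated a.e. derivative, which is all you use.
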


\begin{proof}
Let $u^J\in C^2([0,T];D(\Delta_X)\cap \textrm{Lip}(X))$ be the approximate solution and define 
$$E^J(t):= \int_{B(x_0,r-t)} \Big(|u_t^J(t)|^2 + |\nabla u^J(t)|^2 \Big) \di\meas, \quad\textrm{ for }t\in [0,r).$$
Define $E(t)$ by the same formula with $u^J$ replaced by $u$.
Then $E^J(t)\to E(t)$ as $J\to \infty$ for almost all $t$.
By the initial conditions, $E(0)=0$ by the locality of $|\nabla u|$.
We aim to prove that $E(t)\leq E(0)=0$ for almost all $t$, hence the conclusion. To this end, we verify $E^{J}(t)$ is absolutely continuous and estimate its derivative.

Let $0\leq t_1<t_2< r$ be arbitrary. Consider
\begin{eqnarray*}
E^J(t_2)-E^J(t_1) 
&=& \int_{B(x_0,r-t_2)} \Big(|u_t^J(t_2)|^2 + |\nabla u^J(t_2)|^2 \Big)  - \int_{B(x_0,r-t_2)} \Big(|u_t^J(t_1)|^2 + |\nabla u^J(t_1)|^2 \Big) \\
&&+ \int_{B(x_0,r-t_2)} \Big(|u_t^J(t_1)|^2 + |\nabla u^J(t_1)|^2 \Big)  - \int_{B(x_0,r-t_1)} \Big(|u_t^J(t_1)|^2 + |\nabla u^J(t_1)|^2 \Big) \\
&=& A_1(t_2,t_1)+A_2(t_2,t_1).
\end{eqnarray*}
By the coarea formula, e.g. \cite[Cor. 1.9]{ABS}, applied to distance function $\dist (\cdot,x_0)$, the ball $B(x_0,r-t)$ has finite perimeter for almost all $t$, and
\begin{eqnarray*}
A_2(t_2,t_1)=-\int_{t_1}^{t_2} \int_{S(x_0,r-t)} \Big(|u_t^J(t_1)|^2 + |\nabla u^J(t_1)|^2 \Big) \di|D\chi_{B(x_0,r-t)}| \di t,
\end{eqnarray*}
where $S(x_0,r-t)=\{x\in X: \dist (x,x_0)=r-t\}$.
Denote by 
\begin{equation}\label{def-G2}
G_2(t;t_1):=\int_{S(x_0,r-t)} \Big(|u_t^J(t_1)|^2 + |\nabla u^J(t_1)|^2 \Big) \di|D\chi_{B(x_0,r-t)}|, \textrm{ for any }t\in [0,r).
\end{equation}
Since $G_2(t;t_1)\in \mathcal{L}^1$ (in $t$) by the coarea formula for any $t_1$, then setting $t_1=0$, $A_2(\cdot, 0)$ is an absolutely continuous function.

On the other hand, 
\begin{eqnarray} \label{eq-A2}
A_1(t_2,t_1) &=& \int_{B(x_0,r-t_2)} \int_{t_1}^{t_2} \frac{\partial}{\partial t} \Big(|u_t^J(t)|^2 + |\nabla u^J(t)|^2 \Big) \di t\, \di\meas \nonumber \\
&=& \int_{t_1}^{t_2} \int_{B(x_0,r-t_2)} \Big(2 u_t^J(t) u_{tt}^J (t) + 2\langle \nabla u_t^J(t) , \nabla u^J(t) \rangle \Big) \di\meas\,\di t = \int_{t_1}^{t_2}  G_1(t;t_2) \di t,
\end{eqnarray}
where 
\begin{equation}
G_1(t;t_2):= \int_{B(x_0,r-t_2)} \Big(2 u_t^J(t) u_{tt}^J (t) + 2\langle \nabla u_t^J(t) , \nabla u^J(t) \rangle \Big) \di \meas.
\end{equation}
Again setting $t_1=0$, we see that
\begin{eqnarray*}
A_1(t_2,0) &=& \int_0^{t_2} G_1(t;0) \di t + \int_0^{t_2} \big( G_1(t;t_2) -G_1(t;0) \big) \di t \\
&=& \int_0^{t_2} G_1(t;0) \di t - \int_0^{t_2} \Big(\int_0^{t_2} \int_{S(x_0,r-s)} \big( 2 u_t^J(t) u_{tt}^J (t) + 2\langle \nabla u_t^J(t) , \nabla u^J(t) \rangle \big) \di |D\chi|  \di s \Big)    \di t.
\end{eqnarray*}
Since $u^J\in C^2([0,T];\textrm{Lip}(X))$, all functions appearing above are bounded a.e., 
and hence $A_1(t_2,0)$ can be written as the integral of an integrable function (after switching the order of integration in the latter term), i.e., $A_1(\cdot,0)$ is absolutely continuous.
Therefore, $E^J(\cdot)$
is absolutely continuous so differentiable almost everywhere.

We consider
\begin{equation}
H(t):=\int_{0}^t G_2(\tau;\tau) \di \tau.
\end{equation}
Since $|\nabla u^J|$ is bounded a.e. with respect to the 2-capacity by \cite[Cor. 2.9]{BPS}, 
so a.e. with respect to the perimeter measure by \cite[Lem. 1.10 and Th. 1.12]{BPS03}, then $G_2(\tau;\tau)$ is integrable. 
Hence by the Lebesgue differentiation theorem, $H(t)$ is differentiable for a.e. time $t$ with $(\frac{\di }{\di t} H)(t)=G_2(t;t)$.

\smallskip
From now on, we take a.e. $t_1\in [0,r)$ to be any time at which $E^J(t)$ is differentiable and simultaneously at which $(\frac{d}{dt} H)(t_1)=G_2(t_1;t_1)$.
Then
\begin{eqnarray*}
\big(\frac{d}{dt} E^J \big)(t_1)&=& \lim_{t_2\to t_1} \frac{E^J(t_2)-E^J(t_1)}{t_2-t_1} \\
&=&\lim_{t_2\to t_1} \frac{1}{t_2-t_1} \int_{t_1}^{t_2} \big( G_1(t;t_2) - G_2 (t;t_1) \big) \di t.
\end{eqnarray*}
For the first term, as $G_1(t;t_2)$ is continuous in both variables, we have
$$\lim_{t_2\to t_1} \frac{1}{t_2-t_1} \int_{t_1}^{t_2} G_1(t;t_2) \di t = G_1(t_1;t_1).$$
For the second term, since $u_t^J(t_1),\nabla u^J(t_1)$ in $G_2(t;t_1)$ are Lipschitz in $t_1$ and $|\nabla u^J|$ is bounded a.e. with respect to the perimeter measure, using the choice of $t_1$, we have
\begin{eqnarray*}
\lim_{t_2\to t_1} \frac{1}{t_2-t_1} \int_{t_1}^{t_2} G_2(t;t_1) \di t &=& \lim_{t_2\to t_1} \frac{1}{t_2-t_1} \int_{t_1}^{t_2} G_2(t;t) \di t + \lim_{t_2\to t_1} \frac{1}{t_2-t_1} \int_{t_1}^{t_2} \big(G_2(t;t_1)-G_2(t;t) \big) \di t \\
&=& (\frac{d}{dt} H)(t_1) = G_2(t_1;t_1),
\end{eqnarray*}
where we used
\begin{eqnarray*}
\lim_{t_2\to t_1} \frac{1}{t_2-t_1} \int_{t_1}^{t_2} \big| G_2(t;t_1)-G_2(t;t) \big| \di t &\leq& \lim_{t_2\to t_1} \frac{1}{t_2-t_1} \int_{t_1}^{t_2} \int_{S(x_0,r-t)} C|t-t_1| \di |D\chi_{B(x_0,r-t)}| \di t \\
&\leq& \lim_{t_2\to t_1} \frac{1}{t_2-t_1} \int_{t_1}^{t_2} \int_{S(x_0,r-t)} C|t_2-t_1| \di |D\chi_{B(x_0,r-t)}| \di t =0.
\end{eqnarray*}
Thus, we conclude that
\begin{equation} \label{eq-diff-EJ}
\big(\frac{d}{dt} E^J \big)(t_1)=G_1(t_1;t_1) - G_2(t_1;t_1), \quad \textrm{for a.e. }t_1\in [0,r).
\end{equation}

\medskip


Next, we estimate $G_1(t_1;t_1)$ as follows.
\begin{eqnarray*}
G_1(t_1;t_1)
= \int_{B(x_0,r-t_1)}  \Big(2 u_t^J(t_1) u_{tt}^J (t_1) + 2\langle \nabla u_t^J(t_1) , \nabla u^J(t_1) \rangle \Big) \di \meas.
\end{eqnarray*}
For each fixed $t$, both $u_t^J$ and $u^J$ are test functions since they are linear combinations of eigenfunctions.
Recall the divergence formula
$$\textrm{div} (u_t^J \nabla u^J)= \langle \nabla u_t^J , \nabla u^J \rangle + u_t^J \Delta_X u^J,$$
and the Gauss-Green formula \cite[Th. 2.4]{BPS03}, 
$$\int_{B(x_0,r-t_1)} \textrm{div} (u_t^J \nabla u^J) \di\meas= -\int \langle \textrm{tr}(u_t^J \nabla u^J) , \nu \rangle \,\di|D\chi_{B(x_0,r-t_1)}|,$$
for a.e. $t_1$, and $\nu\in L^2_{B(x_0,r-t_1)}(TX)$ is the unit normal satisfying $|\nu|=1$ a.e. with respect to the perimeter measure.
Then using $u_{tt}^J=\Delta_X u^J+f^J$ and
\begin{equation} \label{eq-trace}
|\langle \textrm{tr}(u_t^J \nabla u^J), \nu \rangle| \leq |\textrm{tr}(u_t^J \nabla u^J)| \cdot |\nu| =\textrm{tr}(|u_t^J \nabla u^J|)=|u_t^J| \cdot |\nabla u^J| 
\end{equation}
which satisfies a.e. with respect to the perimeter measure, we have
\begin{eqnarray} \label{dif-A1}
G_1(t_1;t_1)&=&2 \Big( \int_{B(x_0,r-t_1)}u_t^J u_{tt}^J - \int \langle \textrm{tr}(u_t^J \nabla u^J) , \nu \rangle \,\di|D\chi_{B(x_0,r-t_1)}|- \int_{B(x_0,r-t_1)} u_t^J \Delta_X u^J \Big) \nonumber\\
&\leq& 2 \Big( \int_{B(x_0,r-t_1)}u_t^J f^J + \int  |u_t^J| \cdot |\nabla u^J| \,\di|D\chi_{B(x_0,r-t_1)}|\Big) \nonumber\\
&\leq &  \int_{S(x_0,r-t_1)} \big(|u^J_t(t_1)|^2+|\nabla u^J(t_1)|^2 \big)\,\di|D\chi_{B(x_0,r-t_1)}| + 2 \int_{B(x_0,r-t_1)}u_t^J(t_1) f^J(t_1).
\end{eqnarray}
The inequality \eqref{eq-trace} is due to the following facts. The first inequality uses the definition of $L^2$-normed $L^{\infty}$-module on the perimeter, which is a Hilbert module so the Cauchy–Schwarz inequality is satisfied \cite{Gigli_lecture_notes}.
The second equality uses the property that $|\textrm{tr}(w)|=\textrm{tr}(|w|)$ a.e. for test vector field $w=u_t^J \nabla u^J$, see e.g. \cite[Sec. 2]{BPS03}. 
The third equality follows from the definition of trace operator and the quasi-continuous representative given by \cite[Cor. 2.9]{BPS}.
In particular $|\nabla u^J|$ is bounded a.e. with respect to the perimeter measure by \cite[Lem. 1.10 and Th. 1.12]{BPS03}.

Hence by \eqref{def-G2}, \eqref{dif-A1} and \eqref{eq-diff-EJ}, for a.e. $t_1\in [0,r)$, we have
\begin{equation} \label{diff-EN}
\big(\frac{d}{dt} E^J \big) (t_1)= G_1(t_1;t_1)-G_2(t_1;t_1)
\leq 2\int_{B(x_0,r-t_1)} u^J_t (t_1) f^J (t_1).
\end{equation}
Since $E^J(t)$ is absolutely continuous, using the estimate \eqref{diff-EN}, 
\begin{eqnarray*}
E^J(t)&=& E^J(0)+\int_0^t \frac{d}{d\tau} E^J(\tau) \di\tau \\
&\leq& E^J(0) + 2\int_0^t \int_{B(x_0,r-\tau)} u^J_t(\tau) f^J(\tau) \di\meas \di t. 
\end{eqnarray*}
Since $u_t^J \to u_t$ and $f^J\to f$ uniformly for almost all time, and $E^J(0)\to E(0)$, then using the condition $f|_{K(x_0,r)}=0$, we have
\begin{equation}
E(t)\leq E(0)+2\int_0^t \int_{B(x_0,r-\tau)} u_t(\tau) f(\tau) \di\meas \di\tau=E(0)=0,
\end{equation}
for almost all $t\in [0,r)$. 
As $u\in \textrm{Lip}([0,T];L^2(X))$, this shows $u=0$ in $K(x_0,r)$.
\end{proof}

\section{Unique continuation on RCD spaces} \label{sec-uc}

We state the unique continuation for wave operator on a class of almost $C^1$ spaces.

\begin{theorem}
\label{theorem-uc-general}
Let $(X,\dist,\meas)$ be a compact {\rm RCD}$(K,N)$ space for some $K \in \mathbb{R}$ and some $N \in [1, \infty)$, and let $\mathcal{R}\subset X$ be an open set whose complement is $\meas$-null.
Assume that
\begin{itemize}
\item[(1)] $\mathcal{R}$ is an open weighted $C^1$-Riemannian manifold with locally Lipschitz density $\rho$; 
\item[(2)] For any $x,y\in \mathcal{R}$, there exists a shortest path contained in $\mathcal{R}$ between $x,y$.
\end{itemize}
Then the Tataru-type optimal (weak) unique continuation for the wave operator holds on $(X,\dist,\meas)$. Namely, if $u\in L^{\infty}([-T,T]; H^{1,2}(X)) \cap {\rm Lip} ([-T,T]; L^2(X))$ is a weak solution of $(\partial_t^2-\Delta_X)u=0$ and $u|_{V\times (-T,T)}=0$ on an open subset $V\subset \mathcal{R}$, then $u=0$ on the double cone of influence
\begin{equation}
K(V,T):=\big\{(x,t)\in \mathcal{R}\times (-T,T): \dist(x,V) <T-|t| \big\}.
\end{equation}
\end{theorem}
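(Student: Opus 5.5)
The plan is to reduce the global statement to the local Tataru unique continuation theorem for wave operators with $C^1$ (in fact Lipschitz-coefficient) principal part on the regular set $\mathcal{R}$. Then we propagate the vanishing region along shortest paths lying in $\mathcal{R}$ by a continuity/iteration argument, in the spirit of \cite{KKL,KLLY}.

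First step: the local PDE. On $\mathcal{R}$, in a $C^1$ chart with metric $g$ and density $\rho$, the measure is $\rho\, \mathrm{dvol}_g$. The weak formulation in Definition \ref{weak-formulation} is tested against $v$ compactly supported in $\mathcal{R}\times(-T,T)$; such $v$ lie in the energy class after extension by zero. Using $|\nabla f|=\mathrm{Lip} f$ for Lipschitz $f$ and the locality of the Cheeger energy, this shows that $u$ is a weak $H^1_{loc}$ solution of
\begin{equation*}
\partial_t^2 u-\rho^{-1}|g|^{-1/2}\partial_i\big(\rho|g|^{1/2}g^{ij}\partial_j u\big)=0 .
\end{equation*}
Its principal coefficients $\rho|g|^{1/2}g^{ij}$ are $C^1$ up to the Lipschitz factor $\rho$, hence locally Lipschitz, and the equation is time-independent. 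For time-independent operators with such coefficients, Tataru's theorem (in the low-regularity form of Tataru/Robbiano–Zuily and Hörmander, with Lipschitz principal coefficients) gives the following local statement. If $u=0$ on $B(x,\epsilon)\times(-s,s)$ with $B(x,\epsilon')\subset\mathcal{R}$, then $u=0$ on $\{(y,t):\dist(y,x)<\epsilon+s-|t|-\eta\}$ for small balls. Here $\dist$ is locally the Riemannian distance of $g$, since $\mathcal{R}$ is open and locally geodesic for short distances. I would quote this in its "domain of dependence" form for small radii, obtained by combining the noncharacteristic-surface version with the usual sweeping by a family of ellipsoid-type surfaces $\{\dist(y,x)^2-\alpha t^2=c\}$, suitably smoothed since $\dist$ is only $C^{1}$-ish.

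Second step: globalize. Fix $(y,t)\in K(V,T)$, so there is $x\in V$ with $\dist(x,y)<T-|t|$. By assumption (2), pick a shortest path $\gamma\subset\mathcal{R}$ from $x$ to $y$. The compact set $\gamma$ has a positive distance $\epsilon_0$ from $X\setminus\mathcal{R}$, so its $\epsilon_0$-neighbourhood is in $\mathcal{R}$. Subdivide $\gamma$ into points $x=z_0,\dots,z_m=y$ with consecutive distances less than $\epsilon_0/2$. Then apply the local statement inductively. Vanishing on a small ball around $z_k$ for times $|t|<T_k$ yields vanishing on a ball around $z_{k+1}$ for times $|t|<T_k-\dist(z_k,z_{k+1})-\eta_k$. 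Summing, and using that $\gamma$ is a shortest path so that the lengths add to $\dist(x,y)$, we get vanishing near $(y,t)$ once the losses $\eta_k$ are chosen with total sum below $T-|t|-\dist(x,y)$. The standard trick is to consider the set of $s\in[0,L(\gamma)]$ such that $u$ vanishes near $\gamma(s)$ for $|t|<T-s-\eta$, and show by the local theorem that it is open and closed.

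The main obstacle is the first step's justification of Tataru's theorem at $C^1$/Lipschitz regularity, done uniformly enough on compact subsets of $\mathcal{R}$ to allow uniform step sizes. The geometric sweep uses distance level sets, which are not smooth for a $C^1$ metric. I would handle this by working in a fixed $C^1$ chart and replacing $\dist(\cdot,x)^2$ with the smooth comparison $|y-x|^2_{g(x)}$ (frozen metric). Using continuity of $g$, for sufficiently small radii this quadratic is uniformly comparable to $\dist^2$ with constant $1+o(1)$, which costs only an arbitrarily small $\eta$ in the propagation speed. Pseudoconvexity of these quadratic surfaces with respect to the frozen operator then transfers to the variable one by smallness of the chart, and Tataru's estimate applies. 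This smallness is uniform on compacta, which feeds the iteration.
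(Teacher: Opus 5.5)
Your proposal is correct and follows essentially the same route as the paper. You identify $\Delta_X$ on $\mathcal{R}$ with the weighted Laplacian $\Delta_{g,\rho}$, which has $C^1$ principal part, and apply Tataru's local unique continuation there; this is the paper's Lemma \ref{uc-Tataru}, together with the time translation of Lemma \ref{uc-1}, which you invoke through time-independence. You then propagate along a shortest path in $\mathcal{R}$ through a chain of small balls with summable time losses. Your frozen-metric sweep is a more explicit version of the ``standard arguments'' the paper cites from \cite{KKL} for passing from unique continuation across non-characteristic surfaces to the optimal local domain.
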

\begin{corollary}
\label{theorem-uc}
The unique continuation for wave operator holds on non-collapsed Ricci limit spaces with two-sided Ricci curvature bounds, on cross sections of tangent cones of such spaces, and on Einstein orbifolds.
\end{corollary}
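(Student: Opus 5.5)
The plan is to reduce Corollary \ref{theorem-uc} to Theorem \ref{theorem-uc-general} by checking, for each of the three classes, that the compact $\RCD(K,N)$ space contains an open set $\mathcal{R}$ of full measure satisfying (1) and (2). In all three cases we take $\mathcal{R}$ to be the regular set $\mathcal{R}_n$, $n=\dim(X)$; by Theorem \ref{thm:rcd structure} its complement is $\meas$-null. Each space is non-collapsed, so $\meas=\mathcal{H}^n$ up to a constant and the density $\rho$ is constant. Hence (1) reduces to showing that $\mathcal{R}$ is open and carries a $C^1$ Riemannian metric whose distance agrees locally with $\dist$.

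\textbf{Ricci limits.} Let $X$ be a non-collapsed limit of manifolds with $|\mathrm{Ric}|\le 1$. By Anderson's $\epsilon$-regularity and Cheeger--Colding \cite{CC1}, points of $\mathcal{R}_n$ have neighbourhoods with harmonic coordinates in which the limit metric is $C^{1,\alpha}$. In fact the metric is $W^{2,p}$ and Einstein-type in the Einstein case. This makes $\mathcal{R}_n$ open and gives (1).

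For cross sections $Z$ of tangent cones $C(Z)$, the cone is again such a limit, with Ricci-flat regular part. For $r>0$, the regular set of $C(Z)$ is $(0,\infty)\times \mathcal{R}_Z$, so $\mathcal{R}_Z$ is an open $C^{1,\alpha}$ (indeed Einstein, hence smooth) manifold. Moreover $Z$ is an $\RCD(n-2,n-1)$ space by Ketterer's cone theorem.

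For Einstein orbifolds, take $\mathcal{R}$ to be the manifold part. It is open, smooth, and its complement has codimension at least $2$, hence is $\meas$-null. The orbifold is an $\RCD$ space, as recalled in the introduction.

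\textbf{Condition (2).} This is the main obstacle. Theorem \ref{thm:rcd structure}(3) gives only weak convexity of $\mathcal{R}_n$, whereas (2) asks for an actual shortest path inside $\mathcal{R}$.

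For non-collapsed limits with two-sided Ricci bounds, we use the codimension-$4$ theorem of Cheeger--Naber: the singular set has codimension at least $4$. Colding--Naber Hölder continuity of tangent cones along interiors of geodesics then shows that a limit geodesic between two regular points has all interior points regular. Indeed, the tangent cones along the interior vary continuously and the endpoint cones are $\mathbb{R}^n$. Near a regular point, the $\epsilon$-regularity theorem then forces regularity along the whole segment, and a continuity argument in the parameter covers the entire interior. Concretely, the set of parameters at which the point is regular is open by $\epsilon$-regularity. It is also closed in the interior: by Hölder continuity, the cones at nearby interior points are close to $\mathbb{R}^n$, hence equal $\mathbb{R}^n$ by the gap theorem for $n$-dimensional noncollapsed Ricci-flat cones, which holds thanks to codimension $4$.

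For cross sections, we lift: a geodesic in $Z$ of length less than $\pi$ between regular points corresponds to a geodesic in $C(Z)$ between regular points, and the previous argument applies.

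For orbifolds, a minimizing geodesic cannot pass through a singular point in its interior. Locally, in the uniformizing chart $\mathbb{R}^n/\Gamma$, a geodesic through the fixed locus can be shortened. Alternatively, we argue via non-branching together with the fact that the lift of the fixed set of $\Gamma$ is a totally geodesic union of subspaces, whose complement stays connected with geodesics avoiding it. So geodesics between regular points stay in the manifold part.

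With (1) and (2) verified in each case, Theorem \ref{theorem-uc-general} gives the corollary.
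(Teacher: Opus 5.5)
Your reduction to Theorem \ref{theorem-uc-general} with $\mathcal{R}=\mathcal{R}_n$ is correct. For non-collapsed limits with $|\mathrm{Ric}|\le 1$ it is what the paper does: (1) is the $C^{1,\alpha}$ regularity of the regular set from \cite[Th.~7.2]{CC1}, and (2) is the convexity of the regular set from \cite[Th.~1.5]{CN12}. Your open--closed argument along a limit geodesic reproduces that convexity proof. The codimension-$4$ theorem plays no role there. Closedness only needs the fact that a point whose small balls are almost Euclidean is regular, and that is Anderson's $\epsilon$-regularity, the same input you use for openness.

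For the other two classes your route is genuinely different from, and heavier than, what the paper needs. You treat (2) as the main obstacle, but the paper relaxes it: Corollary \ref{theorem-uc-weak} reruns the chain-of-balls argument along an $\epsilon$-geodesic in $\mathcal{R}$, so weak convexity (2') suffices. In all three of your classes the chosen $\mathcal{R}$ is precisely $\mathcal{R}_n$:
\begin{itemize}
\item at a singular orbifold point the tangent cone is $\mathbb{R}^n/\Gamma\neq\mathbb{R}^n$;
\item a point $z\in Z$ is $(n-1)$-regular if and only if $(1,z)$ is regular in $C(Z)$.
\end{itemize}
Moreover, $\mathcal{R}_n$ is weakly convex in every RCD space by Theorem \ref{thm:rcd structure}(3). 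This settles (2') uniformly, with no geodesic analysis at all.

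Your approach does yield honest convexity, but two steps need repair.
\begin{itemize}
\item \emph{Cross sections.} You omit the case $\dist_Z(z_1,z_2)=\pi$, where the cone geodesic passes through the vertex. There $C(Z)$ splits off a line through $(1,z_1)$, and regularity of $z_1$ forces $Z$ to be the round $S^{n-1}$. So the case is harmless, but it must be addressed.
\item \emph{Orbifolds.} The claim that a geodesic through the fixed locus can be shortened in a chart is false for geodesics tangent to the fixed stratum, e.g.\ along $\mathbb{R}^{n-2}\times\{0\}$ in $\mathbb{R}^{n-2}\times(\mathbb{R}^2/\mathbb{Z}_k)$. The correct argument runs as follows. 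Minimality at an interior singular time forces the lifted direction to be fixed by the isotropy group. By uniqueness of geodesics for the smooth lifted metric, the geodesic then stays in the singular set near that time. Hence the singular times are open and closed in the open parameter interval, which would make the endpoints singular. Your alternative via non-branching is too vague to replace this.
\end{itemize}
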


Note that the convexity assumption (2) in Theorem \ref{theorem-uc-general} can be relaxed to weakly convex by Corollary \ref{theorem-uc-weak}.
In this section, we will prove the unique continuation result above by propagating local unique continuation along shortest paths between regular points of $X$. 
Using the classical theory of Carleman-Tataru estimates, for the local unique continuation for the wave operator to hold, the metric (or coefficient in the principle term) is required to be at least $C^1$, see \cite{T95,EINT}.
For non-collapsed Ricci limit spaces with two-sided curvature bounds, the regular set is a $C^{1,\alpha}$-Riemannian manifold by \cite[Th. 7.2]{CC1} (so have $C^{2,\alpha}$-manifold structure) and is convex by \cite[Th. 1.5]{CN12}.

First, we state the simplest case where the unique continuation does not reach the singular set. In such case, the unique continuation reduces to the classical case for Riemannian manifolds.

\begin{lemma} 
\label{uc-Tataru}
Let $(X,\dist,\meas)$ be a compact $\RCD(K, N)$ space satisfying the assumptions of Theorem \ref{theorem-uc-general}.
Assume $\overline{V} \subset  \mathcal{R}$ and $T<\dist(V,\mathcal{S})$, where $\mathcal{S}:=X\setminus \mathcal{R}$.
If $u\in L^{\infty}([-T,T]; H^{1,2}(X)) \cap {\rm Lip} ([-T,T]; L^2(X))$ is a weak solution of $(\partial_t^2-\Delta_X)u=0$ and $u|_{V\times (-T,T)}=0$, then $u=0$ on $K(V,T)$.
\end{lemma}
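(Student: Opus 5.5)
The plan is to reduce the statement to the classical Tataru unique continuation on a $C^1$ weighted Riemannian manifold, using that the double cone $K(V,T)$ stays a positive distance away from the singular set $\mathcal{S}$. Set $U:=\{x\in X:\dist(x,V)<T\}$. Since $T<\dist(V,\mathcal{S})$, every $x\in U$ satisfies $\dist(x,\mathcal{S})\ge \dist(V,\mathcal{S})-\dist(x,V)>0$. Hence $U\subset\mathcal{R}$, and in fact a slightly larger neighbourhood $U'=\{\dist(\cdot,V)<T'\}$ with $T<T'<\dist(V,\mathcal{S})$ also lies in $\mathcal{R}$. On $U'$ the space is an open weighted $C^1$-Riemannian manifold $(U',g,\rho\,\mathrm{vol}_g)$ with $\rho$ locally Lipschitz. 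There, for Lipschitz functions, the Cheeger energy density agrees with $|\nabla f|_g^2$ (by the identification $|\nabla f|=\mathrm{Lip}f$ recalled in Section \ref{sec-RCD}). So for test functions compactly supported in $U'$ the weak formulation (\ref{def-weak}) becomes
\begin{equation*}
\int\!\!\int \Big(-\partial_t u\,\partial_t v+g^{ij}\partial_i u\,\partial_j v\Big)\rho\sqrt{\det g}\,\di x\,\di t=0 .
\end{equation*}
Thus $u|_{U'\times(-T,T)}\in H^{1}_{\mathrm{loc}}$ is a weak solution of $\partial_t^2u-\rho^{-1}|g|^{-1/2}\partial_i(\rho|g|^{1/2}g^{ij}\partial_j u)=0$. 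This is a second-order hyperbolic operator whose principal coefficients $g^{ij}$ are $C^1$ and whose lower-order (divergence) coefficients come from the Lipschitz weight. This is exactly the regularity class covered by the Carleman–Tataru estimates \cite{T95,EINT}. Since those are time-independent operators with $C^1$ principal part, local unique continuation across noncharacteristic surfaces holds.

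With local unique continuation in hand, I would run the standard global Tataru argument on the manifold $U'$ to obtain $u=0$ on $K(V,T)$. Fix $(x,t)$ with $\dist(x,V)<T-|t|$. Choose $y\in V$ and a shortest path $\gamma$ from $y$ to $x$ of length $\ell<T-|t|$. By the choice of $T'$ this path stays in $U'\subset\mathcal{R}$, and assumption (2) is consistent with this. Cover $\gamma$ by finitely many small coordinate balls on which $g$ is uniformly close to constant. I would then propagate the vanishing of $u$ step by step along $\gamma$, using the local result on domains of the form $\{(z,s): \dist_g(z,y_k)<\delta_k-|s-s_0|\}$, where the distance is the intrinsic Riemannian one. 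This is the usual "the zero set of $u$ expands by the double cone" argument, as in \cite[Th. 3.16]{KKL}. The intrinsic Riemannian distance on $U'$ agrees locally with $\dist$ because $\gamma$ lies in $\mathcal{R}$ and the metric structure there is induced by $g$.

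I expect the main obstacle to be this identification step: checking that the RCD weak solution is a genuine distributional solution of the $C^1$ weighted wave equation, and that the metric $\dist$ restricted to $\mathcal{R}$ is the length metric of $g$ on the relevant scale, so that the Riemannian double cones match $K(V,T)$. For the first point I would use the locality of $|\nabla\cdot|$ together with $|\nabla f|=\mathrm{Lip}f$ for Lipschitz $f$, applied to the bilinear form via polarization. For the second point I would use that any curve of length less than $\dist(V,\mathcal{S})$ starting in $V$ cannot reach $\mathcal{S}$, together with assumption (1). The remaining Carleman machinery is classical and I would cite it rather than reprove it.
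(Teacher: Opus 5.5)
Your proposal is correct and follows essentially the same route as the paper. You identify $\Delta_X$ on $\mathcal{R}$ with the weighted operator $\Delta_{g,\rho}$, apply the Carleman--Tataru estimate for $C^1$ principal coefficients with $L^\infty$ lower-order terms, and run the standard global argument of \cite{KKL}, which never reaches $\mathcal{S}$ because $T<\dist(V,\mathcal{S})$. Two ingredients you leave implicit are ones the paper supplies: density of Lipschitz functions in $H^{1,2}(X)$, needed to pass from Lipschitz functions to $u(t)\in H^{1,2}(X)$, and $\rho>0$ on $\mathcal{R}$, needed when you divide by $\rho$. The remaining difference is cosmetic: the paper feeds the compactly supported $\chi u$, which solves an inhomogeneous equation, into the Carleman estimate, whereas you use the local weak formulation on $U'$ and make the localization away from $\mathcal{S}$ explicit via shortest paths.
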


\begin{proof}
Under the assumptions of Theorem \ref{theorem-uc-general}, let $g$ be the $C^1$-Riemannian metric on $\mathcal{R}$, and $\rho\in \textrm{Lip}(\mathcal{R})$ be the density function such that $\di\meas=\rho\, \di {\rm vol}_{g}$.
Given any regular point, there is a local (harmonic) coordinate in a neighborhood such that the components $g_{ij}(x)\in C^{1}$ in this coordinate.
Since $T<\dist(V,\mathcal{S})$ so unique continuation does not propagate to the singular set. It suffices to reduce this case to the classical Riemannian case.

Let $\chi\in C^2(\mathcal{R})$ be a cut-off function compactly supported in $\mathcal{R}$. 
Then using test function $\chi v$ in the weak formulation (Def. \ref{weak-formulation}) and the Leibniz rule,
\begin{eqnarray*}
\int_X (\chi v) |_{t=0} \psi_1 &=& \int_0^T \int_X \Big(-\partial_t u\, \partial_t (\chi v)+ \langle \nabla u,\nabla (\chi v)\rangle \Big) \di\meas \di t \\
&=& -\int_0^T \int_X \partial_t (\chi u)\, \partial_t v + \int_0^T \int_X \Big( \langle \nabla u,\nabla \chi \rangle v + \langle \nabla u,\nabla v \rangle \chi \Big) \\
&=& -\int_0^T \int_X \partial_t (\chi u)\, \partial_t v + \int_0^T \int_X \Big( \langle \nabla (\chi u),\nabla v \rangle - \langle \nabla \chi,\nabla v \rangle u + \langle \nabla u,\nabla \chi \rangle v \Big).
\end{eqnarray*}
Now if we restrict the test functions $v\in {\rm Lip}(X\times [0,T])$, then $uv\in L^{\infty} ([0,T];H^{1,2}(X))$. Then by the definition of $\Delta_X$,
\begin{eqnarray*}
\int_X v |_{t=0} (\chi \psi_1) &=& -\int_0^T \int_X \partial_t (\chi u)\, \partial_t v + \int_0^T \int_X \Big( \langle \nabla (\chi u),\nabla v \rangle - \langle \nabla \chi,\nabla (uv) \rangle  + 2\langle \nabla u,\nabla \chi \rangle v \Big) \\
&=& -\int_0^T \int_X \partial_t (\chi u)\, \partial_t v + \int_0^T \int_X \Big( \langle \nabla (\chi u),\nabla v \rangle + (\Delta_X \chi )uv + 2\langle \nabla u,\nabla \chi \rangle v \Big).
\end{eqnarray*}
Let us assume that $U={\rm supp}(\chi)\supset \overline{V}$ satisfying $\overline{U}\subset \mathcal{R}$.
We observe that the above integral identity is identically zero outside of $U$ by the locality of $|\nabla u|$.
Thus, it suffices to take the test functions $v$ such that ${\rm supp}(v)\subset U\subset \mathcal{R}$.
This shows that the localized function 
\begin{equation}
\widetilde{u}:=\chi u\in L^{\infty}([0,T]; H_0^{1,2}(U)) \cap {\rm Lip} ([0,T]; L^2(U))
\end{equation}
is a weak solution of the hyperbolic equation 
\begin{equation} \label{eq-local-u}
(\partial_t^2 - \Delta_X)\widetilde{u} = -2\langle \nabla \chi, \nabla u \rangle-u\Delta_X\chi,
\end{equation}
with test functions $v\in {\rm Lip}(U\times [0,T])$ satisfying $v|_{t=T}=0$.

\smallskip
In the subset $U\subset \mathcal{R}$, the space $H^{1,2}(U)$ defined in the non-smooth setting coincides with the classical Sobolev space defined by the Riemannian structure of $\mathcal{R}$, see e.g. \cite[Sec. 3.1]{H18}. Moreover, since Lipschitz functions are dense in $H^{1,2}(X)$ by \cite[Prop. 4.10]{AGS},
and for Lipschitz functions the Cheeger energy coincides with the $L^2$-norm of local Lipschitz constant \cite{Ch99}, 
the Laplacian $\Delta_X$ defined in the non-smooth setting coincides with the weighted Laplacian $\Delta_{g,\rho}$ on the weighted Riemannian manifold $\mathcal{R}$ with locally Lipschitz density $\rho$ (see for instance the proof of \cite[Th. 4.1]{H20} with \cite[Th. 4.1]{BGHZ}). 
In (harmonic) local coordinates on $\mathcal{R}$,
\begin{eqnarray}\label{Laplacian-local}
\Delta_{g,\rho} u&=& \frac{1}{\rho |g|^{\frac12}} \sum \frac{\partial}{\partial x^i} \big(\rho |g|^{\frac12} g^{ij}\frac{\partial u}{\partial x^j}\big), \quad |g|=\det(g_{ij}) \\ 
&=&\sum g^{ij}\frac{\partial^2 u}{\partial x^i \partial x^j} + \textrm{lower order terms}. \nonumber
\end{eqnarray}
Note that $\rho$ is nonzero at any point in $\mathcal{R}$ because one can apply the same proof of \cite[Th. 4.6]{CC3}.

As $g^{ij}\in C^1$ and density function $\rho$ is locally Lipschitz so the coefficients in the lower order terms are in $L^{\infty}(U)$,
we apply the Carleman-Tataru estimate \cite{T95} to the localized function $\widetilde{u}\in H^1(U\times [0,T])$ and the hyperbolic operator $P=\partial_t^2-\Delta_{g,\rho}$, where $\Delta_{g,\rho}$ is the local form of $\Delta_X$ on $U$, to prove the local unique continuation across any non-characteristic hypersurface $\Gamma:=\{y\in U\times (0,T):\psi(y)=0\}$ at $y_0\in \Gamma$.
Namely,
there exist constants $\epsilon_0,\tau_0, C$, such that for $\epsilon<\epsilon_0$ and $\tau > \tau_0$, we have
\begin{eqnarray}
\label{carleman}
 \|e^{-\epsilon D_0^2/2\tau} e^{\tau \phi} \widetilde{u}\|_{1,\tau} \le C \, \tau^{-1/2}\|e^{-\epsilon D_0^2/2\tau} e^{\tau \phi} P\widetilde{u}\|_{L^2} + C\, e^{-\tau \kappa^2/4\epsilon} \|e^{\tau \phi} \widetilde{u}\|_{1,\tau},
\end{eqnarray}
where $e^{-\epsilon D_0^2/2\tau}$ is the integral operator in the time variable with the kernel $(\tau/2\pi\epsilon)^{1/2} e^{-\tau|t'-t|/2\epsilon}$, and $\|u\|_{1,\tau}^2=\tau^2\|u\|_{L^2}^2+\|\nabla u\|_{L^2}^2$.
The weight function $\phi\in C^2$ is a pseudoconvex function, constructed according to the non-characteristic hypersurface $\Gamma$ such that
\begin{equation}\label{phi-support-condition}
\{y\in U\times (0,T):\psi(y) \leq 0\}\subset \{y\in U\times (0,T):\phi(y)<0\}\cup \{y_0\} \quad \textrm{ in }B(y_0,\kappa),
\end{equation}
for sufficiently small $\kappa>0$.
Without loss of generality, assume that $\chi=1$ in $B(y_0,\kappa)$.

For the local unique continuation, suppose that $u$ vanishes on $\{\psi>0\}$, so does $\widetilde{u}$. Then $\textrm{supp}(\widetilde{u})\subset \{\phi<0\}\cup \{y_0\}$ by \eqref{phi-support-condition}, so the last term goes to zero as $\tau\to \infty$. 
By \eqref{eq-local-u}, $P\widetilde{u}$ vanishes on $B(y_0,\kappa)$, so there exists some small $c>0$ such that $\textrm{supp}(P\widetilde{u})\subset \{\phi<-c\}$.
Then using the Carleman estimate \eqref{carleman}, for sufficiently large $\tau$,
$$\|e^{-\epsilon D_0^2/2\tau} e^{\tau \phi} \widetilde{u}\|_{1,\tau} \leq C\tau^{-1/2} e^{-c\tau}\|P\widetilde{u}\|_{L^2}\leq C\tau^{-1/2} e^{-c\tau}\|\widetilde{u}\|_{H^1},$$
which yields that $\|e^{-\epsilon D_0^2/2\tau} e^{\tau (\phi+c)} \widetilde{u}\|_{L^2}$ is bounded independent of $\tau$.
Letting $\tau\to \infty$, we have $\widetilde{u}=0$ on $\{\phi>-c\}$ which contains a neighborhood of $y_0$. This shows $u=0$ in a neighborhood of $y_0$ since $\chi=1$ in $B(y_0,\kappa)$.

With the local unique continuation across any non-characteristic hypersurface, the optimal unique continuation follows by standard arguments (e.g. \cite[Sec. 3.4]{KKL}), as long as it does not reach the singular set. This is guaranteed by the assumption $T<\dist(V,\mathcal{S})$.
\end{proof}

A variation of Lemma \ref{uc-Tataru} gives the following.

\begin{lemma}
\label{uc-1}
Let $(X,\dist,\meas)$ be a compact $\RCD(K, N)$ space satisfying the assumptions of Theorem \ref{theorem-uc-general}.
Assume $\overline{V} \subset \mathcal{R}$ and $T>0$. 
If $u$ is a weak solution of $(\partial_t^2-\Delta_X)u=0$ and $u|_{V\times (-T,T)}=0$, then $u=0$ on $K(V,T)\cap \{x: \dist(x,V)<\dist(\mathcal{S},V)\}$.
\end{lemma}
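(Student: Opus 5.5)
The plan is to reduce Lemma \ref{uc-1} to Lemma \ref{uc-Tataru} by working in shorter time windows. Set $d:=\dist(\mathcal{S},V)$. If $T<d$, the statement is exactly Lemma \ref{uc-Tataru}, since then $K(V,T)$ lies inside $\{\dist(x,V)<d\}$. So I assume $T\ge d$. Fix a point $(x,t)\in K(V,T)$ with $\dist(x,V)<d$. Because $\dist(x,V)<T-|t|$ and also $\dist(x,V)<d$, I can choose $T'$ with
\[
\dist(x,V)<T'<\min\{d,\;T-|t|\}.
\]

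Next I use time-translation invariance. Put $s:=t$ and consider $u_s(\cdot,\tau):=u(\cdot,\tau+s)$ for $\tau\in(-T',T')$. Since $|s|+T'<T$, this window sits inside $(-T,T)$. The function $u_s$ is still a weak solution of $(\partial_t^2-\Delta_X)u_s=0$ on $[-T',T']$ in the energy class. This follows because the weak formulation of Definition \ref{weak-formulation} is autonomous: a test function on the shifted interval, extended by zero, can be used in the original identity. One could equally invoke uniqueness in Proposition \ref{prop-regularity} with data at time $s$. If that requires strong differentiability at time $s$, I only need it for a.e.\ $s$ and then conclude by continuity of $u$ in $L^2$.

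The hypothesis gives $u_s=0$ on $V\times(-T',T')$. The assumptions of Lemma \ref{uc-Tataru} hold: $\overline V\subset\mathcal{R}$ and $T'<d=\dist(V,\mathcal{S})$. Hence $u_s=0$ on $K(V,T')$, and $(x,0)\in K(V,T')$ because $\dist(x,V)<T'$. So $u(\cdot,s)=0$ near $x$. More precisely, I take an open neighbourhood of $(x,t)$ in space–time covered by such cones, since the inequalities above are open conditions. Covering the region $K(V,T)\cap\{\dist(x,V)<d\}$ by these neighbourhoods gives $u=0$ there as an $L^2_{loc}$ function.

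The main obstacle is the measure-theoretic bookkeeping rather than any new analysis. First, the restriction and translation of a weak solution must remain a weak solution, which needs the time-localization of test functions. Second, vanishing on each small space–time cone, obtained from a countable cover, must combine into vanishing on the union. I would handle the first point by approximating test functions on $[-T',T']$ by functions in $E([-T,T];X)$ that vanish outside the window. For this I would use the Galerkin representation from Proposition \ref{prop-regularity} together with Corollary \ref{coro-wave-extension}.
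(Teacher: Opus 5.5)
Your proposal is correct and follows essentially the same route as the paper. You fix $(x_0,t_0)$, translate time by $t_0$, choose a window $T'$ with $\dist(x_0,V)<T'<\min\{\dist(\mathcal{S},V),\,T-|t_0|\}$ (the paper takes $T'=\dist(x_0,V)+\epsilon$), and apply Lemma~\ref{uc-Tataru} to the translated solution.

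One caution: extending a test function by zero does not work at the left endpoint of the window, since that is where the initial-velocity term enters. So it is the paper's remark that a restricted weak solution is again a weak solution for a.e.\ starting time, together with your open-neighbourhood covering, that actually carries that step.
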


\begin{proof}
Take an arbitrary point $(x_0,t_0)\in K(V,T)\cap \{x: \dist(x,V)<\dist(\mathcal{S},V)\}$ and we need to show $u(x_0,t_0)=0$.
Let $\epsilon>0$ such that $|t_0|+\dist(x_0,V)<T-\epsilon$ and $\dist(x_0,V)+\epsilon<\dist(\mathcal{S},V)$.
From the condition $|t_0|+\dist(x_0,V)<T-\epsilon$, we see that
$$u|_{V\times (t_0-\dist(x_0,V)-\epsilon,\, t_0+\dist(x_0,V)+\epsilon)}=0.$$
Let us translate the wave equation in time, defining $\hat{u}(x,t):=u(x,t+t_0)$. The wave equation is satisfied for $\hat{u}$ and $\hat{u}|_{V\times (-\dist(x_0,V)-\epsilon,\dist(x_0,V)+\epsilon)}=0$, with $\dist(x_0,V)+\epsilon<\dist(\mathcal{S},V)$. 
Note that we used the fact that for any $-T\leq t_1 <t_2 \leq T$, $u|_{[t_1,t_2]}$ is a weak solution of the wave equation for almost all $t_1$.
Applying Lemma \ref{uc-Tataru} gives $\hat{u}(x,t)=0$ for $(x,t)\in K(V, \dist(x_0,V)+\epsilon)$. This shows $u(x_0,t_0)=\hat{u}(x_0,0)=0$.
\end{proof}

Next, we prove Theorem \ref{theorem-uc-general} by propagating the unique continuation along a shortest path in $\mathcal{R}$.


\begin{proof}[Proof of Theorem \ref{theorem-uc-general}]
Without loss of generality, we can assume that $\overline{V} \subset \mathcal{R}$.
Take an arbitrary $(x_0,t_0)\in K(V,T)$ with $x_0\in \mathcal{R}$.
Let $\epsilon>0$ such that $|t_0|+\dist(x_0,V)<T-\epsilon$.
Pick a point $q\in V$ such that $\dist(x_0,q)<\dist(x_0,V)+\epsilon/2$.
As $q\in V\subset \mathcal{R}$ and $x_0\in \mathcal{R}$, consider a shortest path $[q x_0]$ that is contained in $\mathcal{R}$ due to Assumption (2) of Theorem \ref{theorem-uc-general}.
Let 
$$\sigma= \min\{ \dist([q x_0],\mathcal{S}) , \dist(V,\mathcal{S}) \}>0.$$
Pick points $\{q_j\}_{j=0}^J$ on the shortest path $[q x_0]$ such that
$$q_0=q,\quad q_J=x_0, \quad \dist(q_j,q_{j+1})<\sigma/4.$$

For point $q$, pick a ball $B(q,\delta)$ of small radius $\delta<\sigma/8$ such that $B(q,\delta)\subset V$.
Then the condition gives $u|_{B(q,\delta)\times (-T,T)}=0$.
Applying Lemma \ref{uc-1} gives 
$$u=0 \;\textrm{ in }\{(x,t): |t|<T-\dist(x,B(q,\delta)),\; \dist(x,B(q,\delta))< \sigma/2\}.$$
Notice that the right-hand set contains $B(q_1,\delta)\times \{t: |t|<T-\dist(q,q_1)-2\delta \}$.
Denote $T_1=T-\dist(q,q_1)-2\delta$.
Then applying Lemma \ref{uc-1} again gives
$$u=0 \;\textrm{ in }\{(x,t): |t|<T_1-\dist(x,B(q_1,\delta)),\; \dist(x,B(q_1,\delta))< \sigma/2\},$$
which contains
$$B(q_2,\delta)\times \{t: |t|<T_1-\dist(q_1,q_2)-2\delta=T-\dist(q,q_2)-4\delta \}.$$
Iterating this process, we obtain
$$u=0 \;\textrm{ in } B(q_j,\delta)\times \{t: |t|<T-\dist(q,q_j)-2j\delta \},$$
for all $j\in \{1,\cdots,J\}$ satisfying $T-\dist(q,q_j)-2j\delta>0$.

Now we choose $\delta<\epsilon/8J$ so that
$$T-\dist(q,x_0)-2J\delta>T-\dist(x_0,V)-\epsilon/2-\epsilon/4>T-\dist(x_0,V)-\epsilon>|t_0|\geq 0.$$
Thus, the process iterates the last step $j=J$, and in particular, $u(x_0,t_0)=0$.
\end{proof}

\begin{corollary} \label{theorem-uc-weak}
The Assumption {\rm (2)} in Theorem \ref{theorem-uc-general} can be relaxed to 
\begin{itemize}
\item[(2')] The open set $\mathcal{R}$ is weakly convex: namely, for any $x,y\in \mathcal{R}$ and any $\epsilon>0$, there exists an $\epsilon$-geodesic $\gamma\subset \mathcal{R}$ connecting $x,y$.
\end{itemize}
\end{corollary}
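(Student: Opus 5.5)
The argument is the chaining used in the proof of Theorem \ref{theorem-uc-general}, with the shortest path $[q x_0]$ replaced by an almost-shortest piecewise geodesic inside $\mathcal{R}$. Lemma \ref{uc-Tataru} and Lemma \ref{uc-1} only use Assumption (1) and local properties near $\overline{V}$, so they remain available. Assumption (2) entered only to produce a compact curve in $\mathcal{R}$ from $q$ to $x_0$ whose length is close to $\dist(q,x_0)$. Such a curve keeps positive distance $\sigma$ from $\mathcal{S}$, and along it the vanishing region can be advanced in steps of size $<\sigma/4$.

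Fix $(x_0,t_0)\in K(V,T)$ with $x_0\in\mathcal{R}$, and assume as before that $\overline V\subset\mathcal{R}$. Choose $\epsilon>0$ with $|t_0|+\dist(x_0,V)<T-\epsilon$, and choose $q\in V$ with $\dist(q,x_0)<\dist(x_0,V)+\epsilon/4$. By (2'), for a parameter $\eta>0$ there is an $\eta$-geodesic $\gamma\subset\mathcal{R}$ from $q$ to $x_0$. Take $\eta$ so small that $\eta^2\dist(q,x_0)<\epsilon/4$; then $L(\gamma)<\dist(q,x_0)+\epsilon/4$. The curve $\gamma$ is compact in the open set $\mathcal{R}$, so
\[
\sigma:=\min\{\dist(\gamma,\mathcal{S}),\dist(V,\mathcal{S})\}>0.
\]
Pick points $q_0=q,q_1,\dots,q_J=x_0$ along $\gamma$ with consecutive arc-length less than $\sigma/4$. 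Then $\dist(q_j,q_{j+1})<\sigma/4$ and $\sum_j\dist(q_j,q_{j+1})\le L(\gamma)$.

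Now iterate Lemma \ref{uc-1} exactly as in the proof of Theorem \ref{theorem-uc-general}, starting from $B(q,\delta)\subset V$ with $\delta<\sigma/8$. The one modification is that the time budget after step $j$ is
\[
T_j=T-\sum_{i<j}\dist(q_i,q_{i+1})-2j\delta,
\]
instead of $T-\dist(q,q_j)-2j\delta$, since we no longer have the triangle-equality along a geodesic. This causes no problem: at each step, Lemma \ref{uc-1} gives $u=0$ on $\{|t|<T_j-\dist(x,B(q_j,\delta)),\ \dist(x,B(q_j,\delta))<\sigma/2\}$. Because $\dist(q_j,q_{j+1})<\sigma/4$, this set contains $B(q_{j+1},\delta)\times\{|t|<T_{j+1}\}$. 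Choosing $\delta<\epsilon/(8J)$, we get
\[
T_J\ge T-L(\gamma)-\epsilon/4>T-\dist(x_0,V)-\epsilon>|t_0|,
\]
so $u(x_0,t_0)=0$.

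The only delicate point is the order of choices. First fix $\epsilon$, then $q$, then $\eta$, and only then the curve $\gamma$. This determines $\sigma$ and $J$, and $\delta$ is chosen last in terms of $J$. No compactness over a family of curves is needed, because everything is done for one fixed target point $(x_0,t_0)$. I therefore expect no real obstacle beyond bookkeeping: the length excess $\eta^2\dist(q,x_0)$ is absorbed into the slack $\epsilon$, just like the $2J\delta$ error.
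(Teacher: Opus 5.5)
Your proposal is correct and matches the paper's proof: replace the shortest path $[qx_0]$ by a nearly length-minimizing piecewise geodesic $\gamma\subset\mathcal{R}$, iterate Lemma \ref{uc-1} along points $q_j$ on $\gamma$ with time budget $T-\sum_{k\le j}\dist(q_{k-1},q_k)-2j\delta$, and absorb both the length excess and the $2J\delta$ error into the slack $\epsilon$. Your separate parameter $\eta$, chosen so that $\eta^2\dist(q,x_0)<\epsilon/4$, makes explicit something the paper leaves implicit. The paper uses an $\epsilon$-geodesic with the same $\epsilon$ as the slack, which tacitly needs $\epsilon\,\dist(q,x_0)$ to be small.
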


\begin{proof}
The proof is almost identical to that of Theorem \ref{theorem-uc-general}. 
Namely, for any $(x_0,t_0)\in K(V,T)$, there is $\epsilon>0$ such that $\dist(x_0,V)<T-|t_0|-4\epsilon$. By the relaxed assumption (2'), there is $q\in V$ and an $\epsilon$-geodesic $\gamma\subset \mathcal{R}$ connecting $q,x_0$ such that $L(\gamma)<\dist(x_0,V)+2\epsilon$. Then one can replace the geodesic $[qx_0]$ by the $\epsilon$-geodesic $\gamma$ in the proof of Theorem \ref{theorem-uc-general}, and pick points $q_0,\cdots,q_J$ on $\gamma$ in the same way with $q_0=q$, $q_J=x_0$.
Repeating the proof gives
$$u=0 \;\textrm{ in } B(q_j,\delta)\times \Big\{t: |t|<T-\sum_{k=1}^j \dist(q_{k-1},q_k)-2j\delta \Big\},$$
for all $j\in \{1,\cdots,J\}$ satisfying $T-\sum\limits_{k=1}^j \dist(q_{k-1},q_k)-2j\delta>0$.
To see the above iterates to the last step $j=J$, choosing $\delta<\epsilon/2J$,
\begin{eqnarray*}
T-\sum_{k=1}^J \dist(q_{k-1},q_k)-2J\delta > T-L(\gamma) - \epsilon > T-\dist(x_0,V)-2\epsilon-\epsilon > |t_0|.
\end{eqnarray*}
In particular, this yields $u(x_0,t_0)=0$.
\end{proof}

\section{Inverse problem for the heat kernel} \label{sec-IP}

Using the unique continuation result that we established, we can prove the uniqueness of the inverse problem for the heat kernel.
First, solving the inverse problem for the heat kernel is equivalent to solving an inverse spectral problem in our setting.
Let $\lambda_j,\, \phi_j$ ($j=0,1,\cdots$) be the eigenvalues and orthonormalized eigenfunctions of $-\Delta_X$. In particular, $\lambda_0=0$ and $\phi_0=\meas(X)^{-1/2}>0$.

\begin{lemma}\label{lem:heat eigen}
Let $(X,\dist,\meas)$ be a compact {\rm RCD}$(K,N)$ space as assumed in Theorem \ref{thm:unique almost}.
Suppose that we are given an open set $V\subset X$ and the measure $\meas$ on $V$.
Then the heat kernel $p$ on $V\times V\times \mathbb{R}_+$ determines the spectral data $\{\lambda_j,\phi_j|_V\}_{j\in \mathbb{N}}$ on $V$, where $\{\phi_j\}_{j\in \mathbb{N}}$ is a choice of orthonormalized eigenfunctions in $L^2(X,\meas)$.
\end{lemma}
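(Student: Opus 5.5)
The plan is to read off the spectral data from the heat kernel expansion \eqref{eq:heat exp}, which converges in $C(X\times X)$ for every $t>0$. Group the eigenvalues into distinct values $0=\mu_0<\mu_1<\mu_2<\cdots$ and write
\begin{equation*}
p(x,y,t)=\sum_{k\ge 0}e^{-\mu_k t}P_k(x,y),\qquad P_k(x,y):=\sum_{j:\lambda_j=\mu_k}\phi_j(x)\phi_j(y).
\end{equation*}
Here $P_k$ is the integral kernel of the orthogonal projection onto the $\mu_k$-eigenspace. It is independent of the chosen orthonormal basis, and it is continuous because each eigenspace is finite dimensional and eigenfunctions are Lipschitz by \eqref{eq:eigenfunction-estimate}. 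By \eqref{eq:eigenfunction-estimate} and the Weyl bound, $\sum_k e^{-\mu_k t}\sup|P_k|<\infty$ uniformly for $t\ge t_0>0$. So for fixed $x,y\in V$ the function $t\mapsto p(x,y,t)$ is a generalized Dirichlet series (exponential sum) that converges absolutely on $(0,\infty)$.

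I would recover the $\mu_k$ and $P_k|_{V\times V}$ inductively. First, $\mu_0=0$ and $P_0=\meas(X)^{-1}$, with $\lim_{t\to\infty}p(x,y,t)=\meas(X)^{-1}$. Now suppose $\mu_0,\dots,\mu_{k-1}$ and $P_0,\dots,P_{k-1}$ on $V\times V$ are known, and set
\begin{equation*}
q_k(x,y,t):=p(x,y,t)-\sum_{l<k}e^{-\mu_l t}P_l(x,y).
\end{equation*}
Then $\mu_k$ is the smallest $\mu>\mu_{k-1}$ such that $\lim_{t\to\infty}e^{\mu t}q_k(x,y,t)\neq 0$ for some $x,y\in V$, and this limit equals $P_k(x,y)$. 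The one point needing care is that $\mu_k$ is actually detected on $V$. Since $P_k(x,x)=\sum_{\lambda_j=\mu_k}\phi_j(x)^2$, it suffices that not every $\mu_k$-eigenfunction vanishes identically on $V$. I expect this to be the main obstacle. It follows from the unique continuation Theorem \ref{theorem-uc-general} together with Corollary \ref{theorem-uc-weak}, using hypotheses (1) and (2) of Theorem \ref{thm:unique almost}. Take $u(x,t)=\cos(\sqrt{\mu_k}t)\phi(x)$, a weak wave solution. If $\phi|_V=0$, then $u$ vanishes on $(V\cap\mathcal{R})\times(-T,T)$, where $V\cap\mathcal{R}\neq\emptyset$ is open because $\meas(X\setminus\mathcal{R})=0$ and $\supp\meas=X$. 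Taking $T>\diam X$, we get $\phi=0$ on $\mathcal{R}$, hence $\meas$-a.e., so $\phi=0$. The exponential-sum uniqueness itself is elementary: multiply by $e^{\mu t}$ and let $t\to\infty$, using the uniform tail bound to control the remaining terms.

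Finally, once $P_k|_{V\times V}$ is known, recover $\phi_j|_V$ up to an orthogonal change of basis inside each eigenspace. The restriction map from the eigenspace $E_k$ to $C(V)$ is injective, again by unique continuation, so $m_k:=\dim E_k$ equals the rank of the kernel $P_k|_{V\times V}$ viewed as an operator on $L^2(V,\meas|_V)$, which we can compute since $\meas$ on $V$ is given. Choose points $y_1,\dots,y_{m_k}\in V$ with $\{P_k(\cdot,y_i)\}$ linearly independent; these span $E_k|_V$. Gram–Schmidt these functions with respect to the inner product $\langle P_k(\cdot,y_i),P_k(\cdot,y_l)\rangle_{E_k}=P_k(y_i,y_l)$, which is the reproducing-kernel identity. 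This yields functions on $V$ that are the restrictions of an orthonormal basis of $E_k$ in $L^2(X,\meas)$. Since $\{\phi_j\}$ was only specified up to such a choice, this determines $\{\lambda_j,\phi_j|_V\}$ as claimed, with multiplicities $m_k$.
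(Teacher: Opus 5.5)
Your proposal is correct and follows essentially the paper's route. You peel off the eigenvalues from the large-time asymptotics of the heat kernel expansion, and you use wave unique continuation (Theorem \ref{theorem-uc-general} with Corollary \ref{theorem-uc-weak}) to guarantee that every eigenvalue is seen on $V$ and that restriction to $V$ is injective on eigenspaces. You then read off the multiplicity as a rank and an orthonormal basis from kernel values at finitely many points of $V$. The differences are cosmetic: the paper uses the integrated trace $\int_V p(x,x,t)\,\di\meas$ and a polar decomposition $A=UP$, whereas you use pointwise limits and Gram--Schmidt via the reproducing identity $\langle P_k(\cdot,y_i),P_k(\cdot,y_l)\rangle=P_k(y_i,y_l)$. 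Your explicit reduction to $V\cap\mathcal{R}$ with $T>\diam X$ fills in a step the paper leaves implicit.
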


\begin{proof}
The proof essentially follows \cite{KKL,KKLM}.
Recall (\ref{eq:heat exp}), namely
\begin{equation}
p(x,y,t)=\sum_{j=0}^{\infty} e^{-\lambda_j t} \phi_j (x) \phi_j(y),\quad \text{in $C(X \times X)$,}
\end{equation}
where $\{\phi_j\}_{j=0}^{\infty}$ is any complete family of orthonormalized eigenfunctions in $L^2(X,\meas)$.
For any $t>0$, using $\lambda_0=0$ and $\phi_0=\textrm{const}$, we consider
\begin{equation}\label{def-I0}
I_0(t):=\int_V p(x,x,t) \,\di\meas(x) = \meas(V) \phi_0^2  +\sum_{j=1}^{\infty} e^{-\lambda_j t} \int_V \phi_j^2 \di\meas.
\end{equation}
Observe that the latter term converges to zero as $t\to \infty$. Indeed, as $\phi_j$ is $L^2$-normalized with respect to $\meas$, for $t>1$,
\begin{eqnarray*}
\lim_{t\to\infty}\sum_{j=1}^{\infty} e^{-\lambda_j t} \int_V \phi_j^2 \di\meas &\leq & \lim_{J\to\infty} \lim_{t\to\infty}\Big(\sum_{j=1}^{J} e^{-\lambda_j t} + \sum_{j=J+1}^{\infty} e^{-\lambda_j t}\Big)  \\
&\leq&\lim_{J\to\infty} \lim_{t\to\infty} \Big(\sum_{j=1}^{J} e^{-\lambda_j t} + \sum_{j=J+1}^{\infty} e^{-\lambda_j} \Big) =0,
\end{eqnarray*}
where we have used a Weyl-type bound (\ref{eq:eigenfunction-estimate}). 
Since the set $V$ and the measure $\meas$ on $V$ are given, the quantity $\lim\limits_{t\to \infty} I_0(t)=\meas(V) \phi_0^2$
determines the (positive) constant $\phi_0$.

For the determination of eigenvalues $\lambda_j$, we first note that by unique continuation for eigenfunctions, which is a consequence of Theorem \ref{theorem-uc-general} (with Corollary \ref{theorem-uc-weak}), there does not exist an eigenfunction vanishing identically on $V$, which shows that all eigenvalues $\lambda_j$ appear in the summation \eqref{def-I0}.
To determine the eigenvalue $\lambda_1$, we can test for the unique number $\lambda \in \R$ such that
\begin{equation}
\lim_{t\to\infty} e^{\lambda t}\Big(I_0(t)-\meas(V) \phi_0^2 \Big) \in (0,\infty).
\end{equation}
Indeed, if $\lambda<\lambda_1$, then using \eqref{def-I0} and the same argument as above,
$$e^{\lambda t}\Big(I_0(t)-\meas(V) \phi_0^2 \Big)=\sum_{j=1}^{\infty} e^{(\lambda-\lambda_j) t} \int_V \phi_j^2 \di\meas \;\to 0,\quad \textrm{ for }t\to \infty.$$
If $\lambda>\lambda_1$, then the limit does not exist as $t\to \infty$ since the first term in the sum goes to infinity.
This determines the eigenvalue $\lambda=\lambda_1$.
Evaluating the limit gives
\begin{equation}
\lim_{t\to\infty} e^{\lambda_1 t}\Big(I_0(t)-\meas(V) \phi_0^2 \Big) = \sum_{\lambda_k=\lambda_1} \int_V \phi_k^2.
\end{equation}
Iterating this process determines the set of all eigenvalues $\lambda_j$.

\smallskip
Using the eigenvalues $\lambda_j$, for any fixed $x,y\in V$, we determine 
\begin{equation}
\sum_{\lambda_k=\lambda_1} \phi_k(x) \phi_k(y) = \lim_{t\to\infty} e^{\lambda_1 t} \Big(p(x,y,t)-\phi_0^2 \Big),
\end{equation}
and iteratively determine
\begin{equation} \label{eq-def-Qj}
Q_j(x,y):=\sum_{\lambda_{k}=\lambda_j}  \phi_{k}(x) \phi_{k}(y),\quad \textrm{ for any }x,y\in V, \textrm{ and all }j\in \mathbb{N},
\end{equation}
where we rely on (\ref{eq:eigenfunction-estimate}). 
When the multiplicity of an eigenvalue is 1, \eqref{eq-def-Qj} determines the eigenfunction up to sign.
In the general case of multiplicity large than 1, by \cite[Lem. 4.9]{KKL}, \eqref{eq-def-Qj} determines the multiplicity of eigenvalues and the orthonormalized eigenfunctions up to orthogonal transformations in eigenspaces.
Let us explain this argument below for the convenience of readers.

\smallskip
Consider the operator
$$(Q_j f)(x) := \int_V Q_j(x,y) f(y) \,\di\meas(y) = \sum_k \langle f, \phi_k \rangle_{L^2(V,\meas)} \phi_k(x), \quad x\in V.$$
As $\{\phi_k|_V\}$ are linearly independent due to Theorem \ref{theorem-uc-general},
the rank of operator $Q_j$ is the multiplicity of eigenvalue $\lambda_j$.
Since $Q_j(x,y),V,\meas|_V$ are known, 
the operator $Q_j$ and its image are known, which determines the rank or multiplicity of eigenvalues. Then we take a basis $v_1,\cdots, v_m$ in the range of $Q_j$ with rank $m$. We know that there exists an invertible matrix $A=(a_{ij})$ such that $\phi_k=\sum_l a_{kl} v_l$, for $k=1,\cdots,m$. Then
\begin{eqnarray*}
Q_j(x,y)&=&(\phi_1(x),\cdots,\phi_m (x)) (\phi_1(y),\cdots,\phi_m (y))^T \\
&=& (v_1(x),\cdots,v_m(x)) A^T A (v_1(y),\cdots,v_m(y))^T.
\end{eqnarray*}
Let us fix the orthonormalized eigenfunctions $\phi_k$ for the moment.
We show that with an appropriate choice of basis $\{v_k\}_{k=1}^m$, the knowledge of $Q_j(x,y)$, for all $x,y \in V$, determines $A^T A$ corresponding to the basis. 
To see this, we need to pick appropriate points $x,y\in V$.
We claim that one can choose a basis $v_1,\cdots,v_m$ and points $x_1,\cdots,x_m\in V$ such that 
\begin{equation} \label{basis-vkxl}
v_k(x_l)=\delta_{kl},\quad \textrm{ for }k,l=1,\cdots,m.
\end{equation}
Indeed, starting with $v_1$, we pick arbitrarily $v_1\neq 0$ and one point $x_1$ such that $v_1(x_1)\neq 0$, then normalize it to $v_1(x_1)=1$. Then we pick any $v_2\neq 0$ linearly independent to $v_1$. If $v_2(x_1)=0$, then pick another point $x_2$ such that $v_2(x_2)=1$ after normalization. Otherwise if $v_2(x_1)\neq 0$, then subtract appropriate multiple of $v_1$ from $v_2$ to define a new vector $\tilde{v}_2$ such that it vanishes at $x_1$, thus reducing to the former case. Repeating this procedure, we get an upper triangular matrix $v_k(x_l)$ with diagonal entries $1$. Then one can eliminate the other entries to make it the identity matrix \eqref{basis-vkxl}.
Using this specific choice of basis and points in \eqref{basis-vkxl}, we can determine $A^T A$ by
$$Q_j(x_k,x_l)= (\delta_{ki}) A^T A (\delta_{il})=(A^T A)_{kl},\quad \textrm{ for }k,l=1,\cdots,m.$$

Now knowing $A^T A$, we use the polar decomposition for $A$: $A=UP$, where $P=(A^T A)^{1/2}$ is known and $U$ is an unknown orthogonal matrix. Then
$$(\phi_1,\cdots,\phi_m)^T=A(v_1,\cdots,v_m)^T=U P (v_1,\cdots,v_m)^T.$$
Hence the given data determines $P (v_1,\cdots,v_m)^T$ on $V$, that is, the values of orthonormalized eigenfunctions, up to an (unknown) orthogonal matrix $U$, restricted on $V$.
\end{proof}

From the spectral data we can perform the reconstruction through a non-smooth version of the geometric Boundary Control method.
The first step is to prove the approximate controllability of the wave equation.
We consider the wave equation
\begin{equation} \label{eq-uf}
\left\{ \begin{aligned}
&(\partial_t^2-\Delta_X) u^f(x,t)=f(x,t), \quad {\rm supp}(f)\subset V \times \mathbb{R}_+,\\
& u^f |_{t=0}=\partial_t u^f |_{t=0}=0.
\end{aligned} \right.
\end{equation}
We denote by $u^f$ the weak solution of the wave equation with source $f$ and zero initial conditions.

\begin{lemma}
\label{prop-control}
Let $(X,\dist,\meas)$ be a compact {\rm RCD}$(K,N)$ space as assumed in Theorem \ref{thm:unique almost}.
Let $\tau>0$ and $V\subset \mathcal{R}$ be an open subset.
Then the set $\{u^f(\cdot,\tau): f\in C_c(V\times (0,\tau))\} $
is dense in $L^2(X(V,\tau))$, where 
$X(V,\tau):=\{x\in X: \dist(x,V)<\tau\}$
is called the domain of influence. 
\end{lemma}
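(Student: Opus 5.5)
The plan is the standard duality argument of the Boundary Control method, with Theorem \ref{theorem-uc-general} (in its relaxed form, Corollary \ref{theorem-uc-weak}) supplying the key uniqueness step. We argue by contradiction.

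First, we check that $u^f(\cdot,\tau)$ is supported in $X(V,\tau)$. Since $f$ vanishes outside $V\times(0,\tau)$ and the initial data are zero, we apply Theorem \ref{prop-finite-propagation} on balls $B(x_0,r)$ with $\dist(x_0,V)\ge r$. Together with the time-translation observation used in Lemma \ref{uc-1}, this shows $u^f(x,\tau)=0$ whenever $\dist(x,V)\ge\tau$. So the set in question is contained in $L^2(X(V,\tau))$, and it is a linear subspace.

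Now suppose the span is not dense. Then there is $\psi\in L^2(X(V,\tau))$, extended by zero to $X$, with $\psi\neq0$ and $\int_X u^f(\cdot,\tau)\psi\,\di\meas=0$ for all $f\in C_c(V\times(0,\tau))$. Let $w$ be the weak solution from Proposition \ref{prop-regularity} of the backward problem $(\partial_t^2-\Delta_X)w=0$ on $X\times[0,\tau]$ with $w|_{t=\tau}=0$ and $\partial_tw|_{t=\tau}=\psi$. We derive the duality identity
\begin{equation*}
\int_X u^f(\cdot,\tau)\psi\,\di\meas=\int_0^\tau\int_X f\,w\,\di\meas\,\di t.
\end{equation*}
This is cleanest at the level of eigenfunction expansions. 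Write $u^f_j(\tau)=\int_0^\tau f_j(s)s_j(\tau-s)\di s$ by \eqref{eq-uj}, and $w_j(t)=-\psi_j s_j(\tau-t)$. Then both sides equal $\sum_j\psi_j\int_0^\tau f_j(s)s_j(\tau-s)\di s$, up to the sign convention, with convergence guaranteed by the $L^2$ bounds. No integration by parts in the weak formulation is needed. Hence $\int_0^\tau\int_V f\,w=0$ for all $f\in C_c(V\times(0,\tau))$, and so $w=0$ on $V\times(0,\tau)$.

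Next, extend $w$ to $t\in[\tau,2\tau]$ by odd reflection about $t=\tau$: set $w(x,\tau+s)=-w(x,\tau-s)$. This is a weak solution, because the reflection preserves both the equation and the data $w=0$, $\partial_tw=\psi$ at $t=\tau$; equivalently, it follows from uniqueness in Proposition \ref{prop-regularity} applied to the series. The extension vanishes on $V\times(0,2\tau)$. Translating time by $\tau$, we get a solution on $[-\tau,\tau]$ vanishing on $V\times(-\tau,\tau)$. Corollary \ref{theorem-uc-weak} now gives $w=0$ on $K(V,\tau)$, that is, on $\{(x,t)\in\mathcal R\times(-\tau,\tau):\dist(x,V)<\tau-|t|\}$ after translation. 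We need $V\subset\mathcal R$ here, which holds by hypothesis; if $\overline V\not\subset\mathcal R$, shrink $V$ and use exhaustion.

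In particular, $w(\cdot,t)$ vanishes near $t=\tau$ (translated time $0$) on $\{x\in\mathcal R:\dist(x,V)<\tau-|t-\tau|\}$. Since $\meas(X\setminus\mathcal R)=0$ and $w\in\mathrm{Lip}([0,2\tau];L^2)$, the difference quotients $(w(\tau+h)-w(\tau))/h$ vanish on $\{\dist(x,V)<\tau-|h|\}$. Letting $h\to0$ in $L^2$ gives $\psi=\partial_tw(\tau)=0$ $\meas$-a.e. on $\bigcup_{h}\{\dist(\cdot,V)<\tau-|h|\}=X(V,\tau)$. The strong derivative at $t=\tau$ exists because of the explicit series, with $w_j'(\tau)=\psi_j$, uniformly convergent in $L^2$. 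This gives $\psi=0$, a contradiction.

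The main obstacle I expect is justifying rigorously the duality identity and the fact that $w$ is an honest weak solution in the class $E$ to which Corollary \ref{theorem-uc-weak} applies, including after the reflection. My plan is to work entirely with the Galerkin series from the proof of Proposition \ref{prop-regularity} and with Corollary \ref{coro-wave-extension}, so that every manipulation is a finite-dimensional computation followed by an $L^2$ limit.
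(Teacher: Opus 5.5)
Your proposal is correct and follows essentially the same route as the paper. Both arguments run the same chain: take $\psi$ orthogonal to the reachable set, solve backward from $t=\tau$ with data $(0,\psi)$, use the duality identity to get $w=0$ on $V\times(0,\tau)$, extend oddly to $[0,2\tau]$, and apply Theorem \ref{theorem-uc-general} together with Corollary \ref{theorem-uc-weak} to conclude $\psi=0$.

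The only difference is technical. You verify the duality identity and the validity of the extension directly from the explicit eigenfunction series, which gives $w\in C^1([0,2\tau];L^2(X))$. This spares you the paper's appeal to Corollary \ref{coro-wave-extension} and the auxiliary time $\delta'$. In that computation the identity actually reads $\int_0^\tau\int_X f\,w\,\di\meas\,\di t=-\int_X u^f(\cdot,\tau)\,\psi\,\di\meas$; the sign is immaterial here, since the right-hand side vanishes.
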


\begin{proof}
We use a standard duality argument from unique continuation. 
Due to Theorem \ref{prop-finite-propagation}, we see that $\textrm{supp}(u^f(\cdot,\tau))\subset X(V,\tau)$.
Let $\psi \in L^2(X(V,\tau))$ be such that
$$\langle u^f(\cdot,\tau), \psi \rangle_{L^2(X)}=0, \quad \forall \, f\in C_c(V\times (0,\tau)).$$
To prove the claim it suffices to prove that $\psi=0$.

Let us consider a wave equation solving backward from $t=\tau$:
\begin{equation} \label{eq-w}
(\partial_t^2-\Delta_X)w=0,\quad w|_{t=\tau}=0,\quad \partial_t w|_{t=\tau}=\psi.
\end{equation}
Since we know $w \in E([0,T];X)$ and $w|_{t=\tau}=0$, we use the weak formulation of the wave equation \eqref{eq-uf} with test function $w$,
$$\int_0^{\tau} \int_X \Big(-\partial_t u^f\, \partial_t w+ \langle \nabla u^f,\nabla w\rangle \Big) \di\meas \di t= \int_0^{\tau} \int_X f w.$$
On the other hand, since $u^{f}|_{t=0}=0$, we use the weak formulation of the wave equation \eqref{eq-w} with test function $u^f$,
$$\int_0^{\tau} \int_X \Big(-\partial_t w\, \partial_t u^f+ \langle \nabla w,\nabla u^f\rangle \Big) \di\meas \di t= \int_X  u^f|_{t=\tau} \psi.$$
Thus,
$$\int_0^{\tau} \int_X f w = \int_X  u^f|_{t=\tau} \psi=0,$$
for all $f\in C_c(V\times (0,\tau))$. 
This implies that $w=0$ on $V\times (0,\tau)$.

To use the unique continuation result,
we consider the weak solution $\bar{w}$ of the wave equation $(\partial_t^2-\Delta_X) \bar{w}=0$ on $X\times [\tau,2\tau]$ with initial conditions $\bar{w}|_{t=\tau}=0$ and $\partial_t \bar{w}|_{t=\tau}=\psi$.
Then we consider 
\begin{equation} 
w_e=\left\{ \begin{aligned}
& w , \quad \textrm{for } t\in [0, \tau], \\
& \bar{w} , \quad \textrm{for } t\in (\tau,2\tau]. \\
\end{aligned} \right.
\end{equation}
By linearity of Laplacian, we see that $w_e(2\tau-t)=-w_e(t)$, so $w_e=0$ on $V\times (0,2\tau)$.
Since $\partial_t w$ exists in the strong sense for a.e. $t\in [0,\tau]$, then for any $\delta\in (0,\tau)$, there exists $\delta'\in [0,\delta)$ such that $\partial_t w|_{t=\delta'}$ exists in the strong sense.
Then by the proof of Corollary \ref{coro-wave-extension}, $w_e\in E([\delta',2\tau];X)$ is a weak solution of the wave equation on $X\times [\delta',2\tau]$ with initial conditions at $t=\delta'$.
Using the unique continuation Theorem \ref{theorem-uc-general} and Corollary \ref{theorem-uc-weak}, we have $w_e=0$ on $\{(x,t): \dist(x,V)<\tau-\delta'-|t-\tau| \}$. 
Letting $\delta$ approaching zero, we have $w_e=0$ on $\{(x,t): \dist (x,V)<\tau-|t-\tau| \}$ and hence $\psi=0$.
\end{proof}

Recall that the local (interior) distance function $r_x:V\to \mathbb{R}$ associated to $x\in X$ is defined by 
\begin{equation}
r_x(z) :=\dist (x,z),
\end{equation}
and denote the set of local distance functions by $R_V(X):=\{r_x: x\in X\}$, see e.g. \cite[Sec. 3.7]{KKL}.

\begin{lemma}
\label{prop-distance}
Let $(X,\dist,\meas)$ be a compact {\rm RCD}$(K,N)$ space as assumed in Theorem \ref{thm:unique almost}.
Suppose that we are given an open subset $V\subset \mathcal{R}$ and the metric $\dist$ and measure $\meas$ on $V$. 
Then the heat kernel on $V\times V\times \mathbb{R}_+$ determines the set of local distance functions $R_V(X)$ on $V$.
\end{lemma}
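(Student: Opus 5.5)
We follow the Boundary Control method of \cite[Sec. 3--4]{KKL}, adapted to the present non-smooth setting. The heat kernel on $V\times V\times\mathbb{R}_+$ determines, by Lemma \ref{lem:heat eigen}, the eigenvalues $\lambda_j$ and the restrictions $\phi_j|_V$ up to orthogonal transformations in each eigenspace; we fix one such choice. For any source $f\in C_c(V\times\mathbb{R}_+)$, the Fourier coefficients of $u^f(\cdot,t)$ are then computable from \eqref{eq-uj}:
\begin{equation*}
u^f_j(t)=\int_0^t s_j(t-\tau)\int_V f(x,\tau)\phi_j(x)\,\di\meas(x)\,\di\tau ,
\end{equation*}
since $f$ is supported in $V$. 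This is the step that uses the weak-solution construction of Proposition \ref{prop-regularity}. By Parseval, all inner products $\langle u^f(\cdot,t),u^h(\cdot,s)\rangle_{L^2(X)}=\sum_j u^f_j(t)u^h_j(s)$ are therefore determined by the data. They are invariant under the orthogonal ambiguity, because that ambiguity acts isometrically on coefficient vectors. In the same way the integrals $\int_X u^f(\cdot,t)\,\di\meas = u^f_0(t)\phi_0^{-1}\cdot\phi_0^2\,\meas(X)$, or more simply $\langle u^f(t),1\rangle$, are determined, since $\phi_0$ is known.

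Next we determine volumes of unions of domains of influence. Take finitely many points $z_i\in V$ and numbers $\tau_i>0$, set $\tau=\max_i\tau_i$, and let $A=\bigcup_i X(B(z_i,\eta)\cap V,\tau_i)$. Consider sources $f$ supported in $\bigcup_i (B(z_i,\eta)\cap V)\times(\tau-\tau_i,\tau)$. By Theorem \ref{prop-finite-propagation} we have $\mathrm{supp}\,u^f(\tau)\subset \overline{A}$. By Lemma \ref{prop-control}, applied on each piece after a time shift and combined by linearity, these $u^f(\tau)$ are dense in $L^2(A)$. Hence
\begin{equation*}
\meas(A)=\sup_f\big(2\langle u^f(\tau),1\rangle-\|u^f(\tau)\|^2\big),
\end{equation*}
since $\inf\|u-1\|^2$ over a dense subset of $L^2(A)$ equals $\|1-\chi_A\|^2$, which gives $\meas(X)-\meas(A)$. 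Thus the quantities $\meas(A)$ are determined. I expect the main obstacle to lie here. Lemma \ref{prop-control} is stated for a single open set and a single time, so extending density to a union of domains of influence with different times requires care: one needs the unique continuation of Theorem \ref{theorem-uc-general} for sources vanishing on the non-cylindrical set $\bigcup_i(B_i\times(\tau-\tau_i,\tau))$. The strategy is to repeat the duality argument. The dual solution $w$ vanishes on each $B_i\times(\tau-\tau_i,\tau+\tau_i)$ after odd reflection, so it vanishes on each $X(B_i,\tau_i)$ at time $\tau$. A further point is that $\meas(\partial X(B,\tau))=0$ is not needed, because density in $L^2$ of the open set $A$ suffices.

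Finally we recover $R_V(X)$ as in \cite[Sec. 3.7]{KKL}. A function $r:V\to\mathbb{R}$ belongs to $R_V(X)$ if and only if, for every finite choice of $z_1,\dots,z_m\in V$ and every $\eta>0$,
\begin{equation*}
\meas\Big(\bigcap_i X\big(B(z_i,\eta),\,r(z_i)+\eta\big)\Big)>0 .
\end{equation*}
Intersections can be computed from union volumes by inclusion--exclusion. Necessity is clear, since the intersection contains a neighborhood of $x$. For sufficiency, pick a dense sequence $\{z_i\}$ in $V$ and points $x_\eta$ in the intersections; by compactness of $X$ these accumulate at some $x$ with $\dist(x,z_i)\le r(z_i)$. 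For the reverse inequality we also use the volumes of the complements $X\setminus \overline{X(B(z_i,\eta),r(z_i)-\eta)}$, requiring positivity jointly with them, which forces $\dist(x,z_i)\ge r(z_i)$. Since $r$ is $1$-Lipschitz (another testable condition), we get $r=r_x$ on $V$. This characterizes $R_V(X)$ purely in terms of the data.
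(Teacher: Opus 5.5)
Your route is the paper's. Lemma \ref{lem:heat eigen} gives the spectral data on $V$, \eqref{eq-Blag} gives the Fourier coefficients of $u^f(\tau)$, and Lemma \ref{prop-control} gives density. The paper then only cites the standard Boundary Control argument for recovering $R_V(X)$. Most of your expansion of that last step is sound: the union volumes via $\sup_f(2\langle u^f(\tau),1\rangle-\|u^f(\tau)\|^2)$, and linearity plus time translation in \eqref{eq-Blag}, so no non-cylindrical unique continuation is needed. But the final characterization fails as written, and one remark is wrong.

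\textbf{The margins in your test for $R_V(X)$ are too tight.} With balls of radius $\eta$ and lower threshold $r(z_i)-\eta$, a genuine $r=r_x$ can fail your joint positivity condition. You have $\dist(x,B(z_i,\eta))\ge r_x(z_i)-\eta$, typically with equality. So $x$ lies on the boundary of $X(B(z_i,\eta),r(z_i)-\eta)$, and the joint set need not contain a neighbourhood of $x$.

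It can even be empty. Work in a flat region (e.g.\ a large flat torus) and take $z_1,z_2,z_3$ at distance $1$ from $x$ at mutual angles $2\pi/3$, with $B(z_i,\eta)\subset V$. The joint set is then $\{y: 1<|y-z_i|<1+2\eta,\ i=1,2,3\}$. Every $y$ with $|y-x|<1$ lies in some closed disc $\{|y-z_i|\le 1\}$. Every $y$ with $|y-x|\ge 1$ has $|y-z_i|\ge\sqrt3$ for some $i$. Hence the set is empty for $\eta<(\sqrt3-1)/2$, and $r_x$ would be rejected.

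The fix is a strict margin, as the paper uses in the proof of Proposition \ref{isometry-eigen} (radius $1/2k$, thresholds $r_p(\xi_l)\pm 2/k$). Require positivity of $\meas\big(\bigcap_i \big(X(B(z_i,\eta),r(z_i)+\eta)\setminus\overline{X(B(z_i,\eta),r(z_i)-2\eta)}\big)\big)$. Then $\dist(x,B(z_i,\eta))\ge r_x(z_i)-\eta>r_x(z_i)-2\eta$, so the set contains a neighbourhood of $x$. Conversely, every point of the set satisfies $|\dist(\cdot,z_i)-r(z_i)|<2\eta$, so your compactness argument still goes through. The measures of these Boolean combinations are determined from union volumes by inclusion--exclusion, as you say.

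\textbf{Null boundaries are needed, contrary to your remark.} Theorem \ref{prop-finite-propagation} only gives $u^f(\tau)=0$ on $\{\dist(\cdot,B_i)>\tau_i \text{ for all } i\}$. So the closure of $\{u^f(\tau)\}$ lies between $L^2(A)$ and $L^2\big(\bigcup_i\{\dist(\cdot,B_i)\le\tau_i\}\big)$. Your supremum equals $\meas(A)$ only if the level sets $\{\dist(\cdot,B_i)=\tau_i\}$ are $\meas$-null. This does hold here, but it must be invoked. Since $X$ is geodesic, $\mathrm{Lip}\,\dist(\cdot,B_i)=1$ off $\overline{B_i}$. Hence $|\nabla \dist(\cdot,B_i)|=1$ a.e.\ there by Cheeger's theorem, and the coarea formula shows the level sets are null.
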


\begin{proof}
By Lemma \ref{lem:heat eigen}, the heat kernel on $V\times V\times \mathbb{R}_+$ determines the spectral data $\{\lambda_j,\phi_j|_V\}$ on $V$.
Given a source $f\in C_c(V\times [0,T])$, the spectral data on $V$ determine the Fourier coefficients $\langle u^f(\cdot,t) ,\phi_j \rangle$ at any time $t$ with zero initial conditions. 
This can be argued using the approximation solution $u^J\in C^2([0,T];D(\Delta_X))$ constructed in the proof of Proposition \ref{prop-regularity}, satisfying $u^J\to u^f$ in $L^{\infty}([0,T];H^{1,2}(X))$ as $J\to\infty$, with initial conditions $\psi_0^J\to 0$ in $H^{1,2}(X)$, $\psi_1^J\to 0$ in $L^2(X)$, and source $f^J\to f$ in $L^2(X\times [0,T])$.
Consider the Fourier coefficients 
$$u^J_j(t)=\int_X u^J(\cdot,t) \phi_j \,\di\meas\in C^2([0,T]),$$ 
which satisfy
$$\partial_t^2 u^J_j = \int_X (\Delta_X  u^J+f^J)\phi_j \,\di\meas=\int_X u^J \Delta_X \phi_j +\int_X f^J \phi_j =-\lambda_j u^J_j +\int_X f^J\phi_j\,\di\meas,$$
with initial conditions $u^J_j(0)=\int_X \psi_0^J \phi_j$ and $\partial_t u^J_j(0)=\int_X \psi_1^J \phi_j$.
Hence $u^J_j(t)$ has the formula:
$$u^J_j(t)=u^J_j(0) \cos(\sqrt{\lambda_j}t)+\frac{\partial_t u^J_j(0)}{\sqrt{\lambda_j}} \sin(\sqrt{\lambda_j}t) +\int_0^t \int_X \frac{\sin(\sqrt{\lambda_j}(t-\tau))}{\sqrt{\lambda_j}} f^J(\cdot,\tau) \phi_j \,\di\meas \di\tau.$$
As $u^J_j(0)\to 0$ and $\partial_t u^J_j(0)\to 0$ as $J\to \infty$, we have
\begin{equation} \label{eq-Blag}
u^f_j(t)=\int_0^t \int_X \frac{\sin(\sqrt{\lambda_j}(t-\tau))}{\sqrt{\lambda_j}} f(\cdot,\tau) \phi_j \,\di\meas \di\tau.
\end{equation}
Since $\textrm{supp}(f)\subset V\times \mathbb{R}$ and $\phi_j$ is known on $V$, the Fourier coefficients $u^f_j(t)$ of $u^f(\cdot,t)$ can be determined for any given $f\in C_c(V\times [0,T])$ for all $t$.

From here, one can follow the standard geometric Boundary Control method, see e.g. \cite{KKL,KL06,KLLY}, to determine the set of local distance functions $R_V(X)$ on $V$. 
This part only uses the formula \eqref{eq-Blag} and the approximate controllability of wave equation (Lemma \ref{prop-control}).
\end{proof}

\begin{lemma}\label{lemma-map-isometry-general}
Let $X_i$ be pointed $\RCD(K,N)$ spaces $(i=1,2)$, and for $x_i\in X_i$, let
$\psi:B_r(x_1) \to B_r(x_2)$
be a map that preserves the heat kernel:
$$p_{X_1}(x,y,t)=p_{X_2}(\psi(x),\psi(y),t), \quad \textrm{for all }x, y\in B_r(x_1),\, t\in \mathbb{R}_+.$$
If $\psi$ is surjective, then $\psi$ is a measure-preserving isometry.
\end{lemma}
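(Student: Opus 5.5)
The argument splits into a metric part and a measure part: the metric part comes from Varadhan's asymptotics, and the measure part from the on-diagonal short-time behaviour of the heat kernel.

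\emph{Distances.} For $x,y\in B_r(x_1)$, Proposition \ref{prop:varadhan} applied in $X_1$ and in $X_2$, together with the assumed equality of heat kernels, gives
\[
\dist_{X_1}(x,y)^2=-\lim_{t\to0^+}4t\log p_{X_1}(x,y,t)=-\lim_{t\to0^+}4t\log p_{X_2}(\psi(x),\psi(y),t)=\dist_{X_2}(\psi(x),\psi(y))^2 .
\]
Hence $\psi$ preserves distances. Combined with the assumed surjectivity, $\psi$ is an isometry of $B_r(x_1)$ onto $B_r(x_2)$ with respect to the restricted (ambient) metrics. It is in particular a homeomorphism, so it maps open subsets and Borel sets of $B_r(x_1)$ to those of $B_r(x_2)$.

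\emph{Measures.} I intend to show that $\meas_{X_1}(B_s(x))=\meas_{X_2}(B_s(\psi(x)))$ for all small balls, and then conclude $\psi_\sharp\meas_{X_1}=\meas_{X_2}$ on $B_r(x_2)$. The tool I would try first is the semigroup identity on the diagonal,
\[
p(x,x,2t)=\int_X p(x,z,t)^2\,\di\meas(z).
\]
By the Gaussian estimate \eqref{eq-Gaussian-estimate}, the contribution from $z\notin B_\epsilon(x)$ is $O(e^{-c/t})$ relative to the main term. So for $x$ with $B_\epsilon(x)\subset B_r(x_1)$,
\[
\int_{B_\epsilon(x)}p_{X_1}(x,z,t)^2\,\di\meas_{X_1}(z)\quad\text{and}\quad \int_{B_\epsilon(\psi(x))}p_{X_2}(\psi(x),w,t)^2\,\di\meas_{X_2}(w)
\]
coincide up to exponentially small errors. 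Writing $\nu=\psi^{-1}_\sharp(\meas_{X_2}|_{B_r(x_2)})$, and using $p_{X_2}(\psi(x),\psi(z),t)=p_{X_1}(x,z,t)$, the second integral becomes $\int_{B_\epsilon(x)}p_{X_1}(x,z,t)^2\,\di\nu(z)$. Thus $\meas_{X_1}$ and $\nu$ have the same ``heat moments'' at every point, which is a local analogue of the integration argument in the proof of Theorem \ref{thm:global rigidity}.

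A cleaner route, which I would pursue in parallel, uses the equation rather than the diagonal. For a Lipschitz $g$ compactly supported in a small ball $B\Subset B_r(x_1)$, consider the function $\partial_t h_t g$ near $t=0$ and evaluate it at points of $B$. Here $h_tg(x)=\int g(z)p_{X_1}(x,z,t)\,\di\meas_{X_1}(z)$, and by heat kernel preservation $\int (g\circ\psi^{-1})(w)\,p_{X_2}(\psi(x),w,t)\,\di\meas_{X_2}(w)=\int g(z)p_{X_1}(x,z,t)\,\di\nu(z)$ (heat kernel preservation together with finite propagation/Gaussian localization). Letting $t\to0$, the first tends to $g(x)$, and the second tends to $(g\circ\psi^{-1})(\psi(x))=g(x)$ as well, by continuity of the heat semigroup at continuous functions. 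Taking the difference, the signed measure $\sigma=\meas_{X_1}-\nu$ on $B_r(x_1)$ satisfies $\int g(z)p_{X_1}(x,z,t)\,\di\sigma(z)=o(1)$ uniformly. This alone is not enough, so the decisive input is the short-time expansion: because the kernel at time $t$ concentrates at scale $\sqrt t$ and is comparable to $\meas_{X_1}(B_{\sqrt t}(x))^{-1}$ by \eqref{eq-Gaussian-estimate}, one obtains
\[
\frac{\sigma(B_{\sqrt t}(x))}{\meas_{X_1}(B_{\sqrt t}(x))}\to0 \quad\text{for every }x .
\]
Differentiation of measures (doubling holds by Bishop--Gromov) then gives $\frac{\di\nu}{\di\meas_{X_1}}=1$ a.e. The singular part is handled symmetrically by exchanging the roles of $X_1$ and $X_2$.

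The main obstacle is this measure step: converting equality of heat kernels into equality of measures. The kernels are equal as functions, but they represent integral operators against \emph{different} measures, so a genuine comparison is needed. If the density-ratio argument above proves delicate, my fallback is the weighted approach, \`a la \cite[Lem. 2.3]{H24}. There the identity $p_{X_1}(x,y,2t)=\int p_{X_1}(x,z,t)p_{X_1}(z,y,t)\,\di\meas_{X_1}$ holds, and its $X_2$ counterpart, pulled back, reads $\int p_{X_1}(x,z,t)p_{X_1}(z,y,t)\,\di\nu$ up to exponentially small error. The local difference is therefore negligible, and the family $\{p_{X_1}(x,\cdot,t)p_{X_1}(\cdot,y,t)\}$ separates measures on small balls as $t\to0$.
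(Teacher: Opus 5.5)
\paragraph{Where the proposal breaks down.} Your distance step is the same as the paper's (Varadhan's asymptotics plus surjectivity). The gap is in the measure step, at the point where you pass from $\int g(z)p_{X_1}(x,z,t)\,\di\sigma(z)=o(1)$ to $\sigma(B_{\sqrt t}(x))/\meas_{X_1}(B_{\sqrt t}(x))\to 0$.

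The Gaussian estimate \eqref{eq-Gaussian-estimate} only says that $p_{X_1}(x,\cdot,t)$ is comparable to $\meas_{X_1}(B_{\sqrt t}(x))^{-1}e^{-\dist^2/ct}$ up to a multiplicative constant. It says nothing about the actual profile of the kernel. For a \emph{signed} measure $\sigma$, a vanishing Gaussian-weighted average does not control the ball average, because positive and negative parts can cancel against one weight and not the other. Nor can you rescue this by choosing $g$ adapted to the scale $\sqrt t$. The bound $|h_tg-g|\le C\,\mathrm{Lip}(g)\sqrt t$ is useless once $\mathrm{Lip}(g)\sim t^{-1/2}$.

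So the ``decisive input'' is asserted, not proved. The heat-moment route and the ``separating family'' fallback are likewise only heuristics.

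\paragraph{How the paper closes it.} The paper gets the ball ratios from a \emph{sharp} statement, not a comparability one. An isometry maps $n$-regular points to $n$-regular points and preserves $\mathcal H^n$. At $n$-regular points, \eqref{short-time-diagonal} gives $\meas(B_{\sqrt t}(\bar x))p(\bar x,\bar x,t)\to c_n$, hence $\meas_{X_1}(B_s(\bar x))/\meas_{X_2}(B_s(\psi(\bar x)))\to 1$. The densities $\di\meas/\di\mathcal H^n$ on the full-measure set $\mathcal R_n^*$ of Theorem \ref{thm:rcd structure}(5) therefore agree.

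\paragraph{A fix within your own framework.} Your setup is one step from a complete, more elementary proof. The identity you wrote is in fact exact. For $g$ Lipschitz with compact support in $B_r(x_1)$ and $x\in B_r(x_1)$,
$F_t(x):=\int g(z)p_{X_1}(x,z,t)\,\di\sigma(z)=h^{X_1}_tg(x)-h^{X_2}_t(g\circ\psi^{-1})(\psi(x))$,
and $F_t\to 0$ uniformly on $B_r(x_1)$.

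Instead of trying to read off ball ratios, pair $F_t$ with $\phi\,\di\meas_{X_1}$, where $\phi$ is Lipschitz and supported in $B_r(x_1)$, and use the symmetry of the kernel:
$\int\phi F_t\,\di\meas_{X_1}=\int g\,h^{X_1}_t\phi\,\di\sigma\to\int g\phi\,\di\sigma$.
Hence $\int g\phi\,\di\sigma=0$ for all such $g,\phi$, so $\sigma=0$ on $B_r(x_1)$, i.e.\ $\psi_\sharp\meas_{X_1}=\meas_{X_2}$ there.

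This duality argument avoids the regular set, $\mathcal R_n^*$ and the heat-kernel convergence of \cite{AHT} altogether. The paper's route is shorter only because that structure theory is already in place.
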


\begin{proof}
By Varadhan's asymptotics, we know that $\psi$ preserves the distances.
In particular, $\psi$ gives an isometry as metric spaces, therefore for a point $\bar x \in B_r(x_1)$, $\bar x$ is $n$-regular if and only if so is $\psi (\bar x)$, where the regular set we are discussing here is a metric one using (metric) tangent cones. 

Finally let us check that $\psi$ preserves the measures.  To this end, denote by $n$ the essential dimensions of $X_i$, which coincide because of the arguments above, and fix an $n$-regular point $\bar x \in B_r(x_1)$. Letting $t \to 0^+$ in
\begin{equation}
    t^{\frac{n}{2}}p_{X_1}(\bar x, \bar x, t)=t^{\frac{n}{2}}p_{X_2}(\psi (\bar x), \psi (\bar x), t) 
\end{equation}
yields that $\bar x \in \mathcal{R}_{X_1, n}^*$ holds (recall (5) of Theorem \ref{thm:rcd structure}) if and only if $\psi(\bar x) \in \mathcal{R}_{X_2,n}^*$ holds, 
where we used a convergence result for the heat kernel proved in \cite{AHT} (see (\ref{eq:heat conv})) for an $n$-regular point $\bar x$;
\begin{equation} \label{short-time-diagonal}
    \meas_{X_1}(B_{\sqrt{t}}(\bar x))p_{X_1}(\bar x, \bar x, t) \to c_n \in (0, \infty).
\end{equation}
Recalling (5) of Theorem \ref{thm:rcd structure}, 
the above observation tells us
\begin{equation}
    \frac{\di\meas_{X_1}}{\di\mathcal{H}^n}(z)=\frac{\di\meas_{X_2}}{\di\mathcal{H}^n}(\psi(z)),\quad \text{for $\meas_{X_1}$-a.e. $z \in \mathcal{R}_{X_1, n}^*$}.
\end{equation}
In particular $\psi$ preserves the measures. 
\end{proof}

Under the setting of Theorem \ref{thm:unique almost}, we show that the surjectivity assumption for $\psi$ in the lemma above can be removed. In order to state it, we prepare the following, where roughly speaking, $x \in \mathcal{R}_{n, \epsilon}$ holds if and only if any tangent cone at $x$ is pGH-close to $\mathbb{R}^n$. 
\begin{definition}[Almost regular point]
    Let $(X, \dist, \meas)$ be an $\RCD(K, N)$ space  for some $K \in \mathbb{R}$ and some $N \in [1, \infty)$, and let $r \in (0,\infty), \epsilon \in (0, 1)$ and let $n \in \mathbb{N}$. Define the set, denoted by $\mathcal{R}_{X, n, \epsilon, r}$ or $\mathcal{R}_{n, \epsilon, r}$ for short, of points $x \in X$ with
    \begin{equation}
        \dist_{\mathrm{pGH}}\left((X,s^{-1}\dist_X, x), (\mathbb{R}^n, \dist_{\mathbb{R}^n}, 0_n)\right) \le \epsilon s,\quad \text{for all $s \in (0, r]$,}
    \end{equation}
    where recall that $\dist_{\mathrm{pGH}}$ denotes the pointed Gromov-Hausdorff distance. Finally, put
    \begin{equation}
        \mathcal{R}_{n, \epsilon}:=\bigcup_{r>0}\mathcal{R}_{n, \epsilon, r}\,.
    \end{equation}
\end{definition}

Let us note that pGH is enough in the definition above, not necessary pmGH, because of the splitting theorem, as mentioned around (\ref{eq:regular}).
Note that 
\begin{equation}
    \mathcal{R}_n=\bigcap_{\epsilon>0}\bigcup_{r>0}\mathcal{R}_{n, \epsilon, r}\, ,
\end{equation}
and that if $X$ is non-collapsed, then for any $\epsilon>0$, any $x \in \mathcal{R}_N$ is an interior point of $\mathcal{R}_{n, \epsilon, r}$ for some $r>0$. 

\begin{lemma}\label{lem:surjective}
For all $K \in \mathbb{R}$ and $N \in [1, \infty)$, there exists $\epsilon_{K,N}>0$ such that the following holds. 
Let $X_i$ be $\RCD(K,N)$ spaces for some $K \in \mathbb{R}$ and some $N \in [1, \infty)$, and let $V_i\subset X_i$ be open subsets $(i=1,2)$, and let
$\Phi:V_1 \to V_2$
be a map that preserves the heat kernel:
$$p_{X_1}(x,y,t)=p_{X_2}(\Phi(x),\Phi(y),t), \quad \textrm{for all }x,y\in V_1,\, t\in \mathbb{R}_+.$$    
Assume that $V_1 \cap \mathcal{R}_{n_1, \epsilon_{K, N}}$ has an interior point, where $n_1$ denotes the essential dimension of $X_1$.
Then there exist open subsets $B_1\subset V_1$ and $B_2\subset V_2$ such that
\begin{equation}\label{map-isometry2}
\Phi|_{B_1}: B_1\to B_2 \;\textrm{ is a measure-preserving isometry.}
\end{equation}
\end{lemma}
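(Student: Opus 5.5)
The plan is to reduce to Lemma \ref{lemma-map-isometry-general} by showing that $\Phi$ is a local isometry onto an open set near an almost regular interior point. The reduction has four steps.

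First, Varadhan's asymptotics (Proposition \ref{prop:varadhan}) applied to $p_{X_1}(x,y,t)=p_{X_2}(\Phi(x),\Phi(y),t)$ gives $\dist_{X_2}(\Phi(x),\Phi(y))=\dist_{X_1}(x,y)$ for all $x,y\in V_1$. So $\Phi$ is an isometric embedding, but a priori it need not be open.

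Second, fix an interior point $\bar x$ of $V_1\cap \mathcal{R}_{n_1,\epsilon}$. Choose $r>0$ small so that $B_{4r}(\bar x)\subset V_1\cap\mathcal{R}_{n_1,\epsilon}$, and moreover $B_{4r}(\bar x)\subset\mathcal{R}_{n_1,\epsilon,s}$ for some $s>0$. The ball $B_{4r}(\bar x)$ is then Gromov--Hausdorff close to a Euclidean ball $B_{4r}(0_{n_1})$ at all small scales, by the definition of almost regular points.

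Next I would determine the essential dimension $n_2$ of $X_2$ near $\Phi(\bar x)$ from the diagonal heat kernel. By (\ref{short-time-diagonal}) and Theorem \ref{thm:gaussian}, $p(x,x,t)\asymp \meas(B_{\sqrt t}(x))^{-1}$. Under the almost Euclidean condition, Bishop--Gromov-type control gives $\meas(B_{\sqrt t}(x))\approx t^{n_1/2}$ up to factors that are uniform in the ratio across scales. Evaluating at $n_2$-regular points of $\Phi(B_r(\bar x))$ is not directly possible, since the image may be small. Instead I would argue as follows. The image $\Phi(B_r(\bar x))$ is isometric to a set that is $\epsilon$-close to Euclidean $n_1$-balls at every scale $\le r$. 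If $n_2\neq n_1$, a blow-up at a point $\Phi(z)$ produces a tangent cone of $X_2$ (an $\RCD(0,N)$ space) containing an isometric copy of an almost $\mathbb{R}^{n_1}$ ball. Meanwhile the measure scaling, read off from the diagonal heat kernel ratios $p(z,z,t)/p(z,z,4t)$, matches that of $X_1$. For $\epsilon=\epsilon_{K,N}$ small, the metric-measure rigidity of almost Euclidean volume ratio forces the ball $B_{\rho}(\Phi(z))$ in $X_2$ to be GH-close to $B_\rho(0_{n_1})$, by volume cone implies metric cone together with Cheeger--Colding/De Philippis--Gigli almost rigidity. In particular $n_2=n_1$.

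Fourth, this is the key step and the main obstacle: openness of the image. The isometric image $\Phi(B_{r}(\bar x))$ is an almost Euclidean $n_1$-ball sitting inside $B_r(\Phi(\bar x))$, which is itself $\epsilon$-GH close to an $n_1$-dimensional Euclidean ball at every small scale. Given $y\in B_{r}(\Phi(\bar x))$, I would show $y\in\Phi(B_{2r}(\bar x))$ in three moves. First compose with the almost-isometries to $\mathbb{R}^{n_1}$; this yields a map $B_{2r}(0)\to B_{2r}(0)$ that is within $C\epsilon r$ of an isometry. Then use a degree or Reifenberg-type argument: an $\epsilon r$-distance-preserving map between Euclidean balls of the same dimension is $C\epsilon r$-surjective. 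Finally iterate across scales $r,r/2,\dots$, using the almost regularity at every scale together with the completeness and properness of $X_i$, to conclude that $y$ lies in the closure of the image, hence in the image by compactness of $\overline{B_{2r}(\bar x)}$ (taking $B_{3r}(\bar x)\Subset V_1$). This argument needs $\epsilon_{K,N}$ small enough that the Euclidean surjectivity estimate iterates with a geometric series. With $B_2:=\Phi(B_1)$ open for a suitable smaller ball $B_1$, Lemma \ref{lemma-map-isometry-general} (applied to $\Phi|_{B_1}$, surjective onto $B_2$) gives that $\Phi|_{B_1}$ is a measure-preserving isometry.
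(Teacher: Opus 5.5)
Your first two steps are fine. In the second, the scale $s$ is not automatically uniform on a ball, but each $\mathcal{R}_{n_1,\epsilon,r}$ is closed, so Baire category gives a uniform scale on a smaller open set. Your fourth-step iteration would also work, provided you had almost Euclidean $n_1$-balls at a uniform scale around every image point in $X_2$. The genuine gap is your third step, which is supposed to supply exactly that.

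\begin{itemize}
\item The heat kernel identity controls $\meas_{X_2}(B_{\sqrt t}(\Phi(z)))$ only up to the fixed factor $C(K,N)$ of Theorem \ref{thm:gaussian}. So the ratios $p(z,z,t)/p(z,z,4t)$ can never certify a volume ratio that is \emph{almost} Euclidean, and no almost-rigidity theorem is triggered.
\item ``Volume cone implies metric cone'' and its almost version are tied to the exponent $N$ of the $\RCD(K,N)$ condition. For almost Euclidean rigidity they also need the non-collapsed normalization $\meas=\mathcal{H}^N$. For a general weighted measure with $n_1<N$, growth of order $r^{n_1}$ carries no such rigidity by itself: $([0,\infty),|\cdot|,x^{n_1-1}\di x)$ has exact $r^{n_1}$ growth at $0$.
\item Blowing up at a single point only gives information as $\rho\to 0$ at that point, not closeness at a uniform finite scale. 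That requires a compactness argument over sequences with $\epsilon_i\to 0$, which is also where $\epsilon_{K,N}$ comes from.
\end{itemize}

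What closes the argument in the paper is the exact heat kernel identity in a blow-up limit, combined with the splitting theorem. Argue by contradiction. Take a point $\bar y$ of the target ball missed by the image, and a nearest image point $\Phi(\bar x)$ at distance $s$. Rescale both spaces at scale $\min\{s,r_{\bar x}\}$, normalizing both measures by the same constant; the Gaussian estimate keeps the two volume normalizations comparable.

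The source then converges to $\mathbb{R}^{n}$. By \eqref{eq:heat conv}, the maps converge to a heat-kernel-preserving isometric embedding of $\mathbb{R}^n$ into an $\RCD(0,N)$ limit. By the splitting theorem this limit is $\mathbb{R}^n\times W$, and $W$ is not a point because of the missed point. The embedding has the form $v\mapsto(v,\phi(v))$.

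Since $p_{\mathbb{R}^n\times W}=p_{\mathbb{R}^n}\cdot p_W$, heat kernel preservation forces $p_W(w,w,t)$ to be constant in $t$. This is impossible for non-trivial $W$, because $p_W(w,w,t)\asymp\meas_W(B_{\sqrt t}(w))^{-1}\to\infty$ as $t\to 0^+$.

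This single argument gives the dimension match and surjectivity onto a ball at once. Neither your volume-rigidity step nor the multi-scale iteration is needed, and Lemma \ref{lemma-map-isometry-general} then finishes as you say.
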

\begin{proof}
Let us divide the proof into the following $3$ steps.

\smallskip
\textbf{Step 1}: \textit{Let $\phi:Y \to Z$ be a map satisfying that the map $\Phi=(\mathrm{id}_Y, \phi):Y \to Y \times Z$
preserves the heat kernels, where both $Y, Z$ are $\RCD(0,N)$ spaces for some $N \in [1, \infty)$. Then $Z$ is a single point.}

\smallskip
Since the heat kernel of $Y \times Z$ also splits in the sense;
\begin{equation}
    p_{Y \times Z}((y_1, z_1), (y_2, z_2), t)=p_Y(y_1, y_2, t)\cdot p_Z(z_1, z_2, t), \quad y_i \in Y, z_i \in Z,
\end{equation}
our assumption yields that
$p_{Z}(z, z, t)=1$ holds for any $t>0$, where this does not happen if $Z$ is not a single point. This is due to the fact that in general the order of $p(x,x,t)$ as $t \to 0^+$ is equal to that of $\meas(B_{\sqrt{t}}(x))^{-1}$. Moreover if $X$ is not a single point, then $\meas(B_r(x)) \le Cr$ for any $r \in (0, 1]$. Thus we get \textit{Step 1}.

\smallskip
\textbf{Step 2}: \textit{For all $K \in \mathbb{R}$ and $N \in [1, \infty)$, there exists $\epsilon_{K, N}>0$ such that the following hold; let $(X, \dist_X, \meas_X, x)$ be a pointed $\RCD(K, N)$ space, and let $(Y, \dist_Y, \meas_Y, y)$ be a pointed $\RCD(-\epsilon_{K,N}, N)$ space. If 
\begin{enumerate}
    \item there exists $r_x \in (0, 1]$ such that for any $\bar x \in \bar B_{r_x}(x)$, there exists $r_{\bar x} \in (0, 1]$ such that 
\begin{equation}
    \dist_{\mathrm{pGH}}\left( \left( X, s^{-1}\dist_X, \bar x\right), \left(\mathbb{R}^n, \dist_{\mathbb{R}^n}, 0_n)\right)\right)\le \epsilon_{K, N}, \quad \textit{for any $s \in (0, r_{\bar x}]$;}
\end{equation}
    \item there exists a map $\phi:(X, x) \to (Y, y)$ preserving the heat kernels,
\end{enumerate}
then $\phi(\bar B_{\frac{r_x}{100}}(x))=\bar B_{\frac{r_x}{100}}(y)$.}

\smallskip
Before starting the proof, notice by the proof of Lemma \ref{lemma-map-isometry-general} that if a map $\phi:(X, \dist_X, \meas_X) \to (Y, \dist_Y, \meas_Y)$ between RCD spaces preserves the heat kernels, then so is $\phi:(X, s^{-1}\dist_X, c\meas_X) \to (Y, s^{-1}\dist_Y, c\meas_Y)$ for all $s>0$ and $c>0$, and $\meas_X(B_r(x))^{-1}\meas_Y(B_r(\phi(x))) \to 1$ as $r \to 0^+$ for any $x \in X$, where this fact will be used immediately below.

The proof is done by contradiction; Assume that there exist;
\begin{itemize}
\item a sequence $\epsilon_i \to 0^+$;
    \item sequences of pointed $\RCD(K, N)$ spaces $(X_i, \dist_{X_i}, \meas_{X_i}, x_i)$ and of $r_{x_i} \to 0^+$ satisfying that for any $\bar x \in \bar B_{r_i}(x_i)$, there exists $r_{\bar x} \in (0, 1]$ such that 
    \begin{equation}
        \dist_{\mathrm{pGH}}\left( (X_i, s^{-1}\dist_{X_i}, \bar x_i), (\mathbb{R}^n, \dist_{\mathbb{R}^n}, 0_n)\right) \le \epsilon_i, \quad \text{for any $s \in (0, r_{\bar x_i}]$,} 
    \end{equation}
    for some $n \in \mathbb{N} \cap [1, N]$;
    \item sequences of pointed $\RCD(-\epsilon_i, N)$ spaces $(Y_i, \dist_{Y_i}, \meas_{Y_i}, y_i)$ and of maps $\phi_i:(X_i, x_i) \to (Y_i, y_i)$ preserving the heat kernels (thus, they also preserve the distances by Varadhan's asymptotics) such that there exists $\bar y_i \in \bar B_{\frac{r_{x_i}}{100}}(y_i) \setminus \phi_i(\bar B_{\frac{r_{x_i}}{100}}(x_i))$.
\end{itemize}
Notice that we can easily see $\bar y_i \not \in \phi_i(\bar B_{r_{x_i}}(x_i))$ (by contradiction). Find $\bar z_i \in \phi_i(\bar B_{r_{x_i}}(x_i))$, thus $\bar z_i=\phi_i(\bar x_i)$ for some $\bar x_i \in \bar B_{r_{x_i}}(x_i)$, with
\begin{equation}
    s_i:=\dist_{Y_i}(\bar y_i, \bar z_i)=\dist_{Y_i}(\bar y_i, \phi_i(\bar B_{r_{x_i}}(x_i))) \in (0, r_i].
\end{equation}
Note that the triangle inequality shows $\bar x_i \in \bar B_{\frac{r_{x_i}}{2}}(x_i)$. Put $t_i:=\min \{s_i, r_{\bar x_i}\}$ and consider rescaled RCD spaces:
\begin{equation}
    \left(X_i, t_i^{-1}\dist_{X_i}, \meas_{X_i}(B_{t_i}(\bar x_i))^{-1}\meas_{X_i}, \bar x_i\right), \quad \left( Y_i, t_i^{-1}\dist_{Y_i}, \meas_{X_i}(B_{t_i}(\bar x_i))^{-1}\meas_{Y_i},\bar z_i\right),
\end{equation}
where we notice a fact that 
    $
        \frac{\meas_{Y_i}(B_{t_i}(\bar z_i))}{\meas_{X_i}(B_{t_i}(\bar x_i))}
    $ 
    has uniform positive two-sided bounds. This fact follows from our assumption, $\phi_i$ preserves the heat kernel and a Gaussian estimate (on the diagonal part), more precisely;
    \begin{equation}
        \meas_{X_i}(B_{t_i}(\bar x_i))^{-1}\sim p_{X_i}(\bar x_i, \bar x_i, t_i^2)=p_{Y_i}(\bar z_i, \bar z_i, t_i^2)\sim \meas_{Y_i}(B_{t_i}(\bar z_i))^{-1}.
    \end{equation}
Letting $i \to \infty$ after passing to a subsequence, together with the splitting theorem \cite{G13} and the continuity of the heat kernels with respect to the pmGH convergence (\ref{eq:heat conv}), 
allows us to find a map 
\begin{equation}
    \mathbb{R}^n \to \mathbb{R}^n\times W, \quad v \mapsto (v, \phi(v))
\end{equation}
preserves the heat kernels, where $W$ is an $\RCD(0, N)$ space which is not a single point (e.g. the proofs of \cite[Prop. 4.1 and Th. 4.3]{H1dim}). This contradicts \textit{Step 1}. Thus we have \textit{Step 2}.

\smallskip
\textbf{Step 3}: \textit{Conclusion.}

\smallskip
Fixing $\epsilon_{k, n}$ as appeared in \textit{Step 2}, fix an interior point $x$ of $V_1 \cap \mathcal{R}_{n_1, \epsilon_{k, N}}$. Then applying Lemma \ref{lemma-map-isometry-general} together with \textit{Step 2} to a small ball centered at $x$ completes the proof.
\end{proof}

\begin{remark}
Note that Lemma \ref{lem:surjective} is valid under the assumptions of Theorem \ref{thm:unique almost} and, in particular, the essential dimensions of $X_i$ coincide: $n_1=n_2$.
In the special case of $X_1$ being a weighted $C^1$-Riemannian manifold (in particular $\mathcal{R}=X_1$), the argument for Lemma \ref{lem:surjective} is simplified as follows.
The preservation of the heat kernels implies that $\frac{\meas_{X_2}(B_r(\Phi(x)))}{\meas_{X_1}(B_r(x))}$ has uniform two-sided bounds due to the Gaussian estimate \cite{JLZ} (Theorem \ref{thm:gaussian}), and thus the dimensions must coincide. Then the distance preservation, which is justified by Varadhan's asymptotics, implies that $\Phi(B_r(x))$ for small $r$ is open in $X_2$ by the invariance of domain. Hence $\Phi|_{B_r(x)}$ is a measure-preserving isometry for small $r$ by the short-time diagonal asymptotics of the heat kernel \eqref{short-time-diagonal}. 
\end{remark}

\begin{proposition} \label{isometry-homeo}
For $i=1,2$, let $X_i$ be two compact $\RCD(K,N)$ spaces in the class specified in Theorem \ref{thm:unique almost}.
Let $V_i\subset X_i$ be open subsets. 
Suppose that there is a map $\psi:V_1\to V_2$, such that 
$$p_{X_1}(x,y,t)=p_{X_2}(\psi(x),\psi(y),t),\quad \textrm{for all }x,y\in V_1,\, t\in \mathbb{R}_+.$$
Then one can construct a homeomorphism $\Psi:X_1\to X_2$ such that $\Psi|_{V_1}=\psi$.
\end{proposition}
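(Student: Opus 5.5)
The plan is to reduce to the case where both observation sets are small regular balls identified by a measure-preserving isometry, and then to use the Boundary Control output of Lemma \ref{prop-distance} to recover each $X_i$ from its local distance functions. First apply Lemma \ref{lem:surjective}. Since $\mathcal{R}\cap V_1$ is a nonempty open subset of the regular set (its complement is $\meas$-null), it contains interior points of $\mathcal{R}_{n_1,\epsilon_{K,N}}$. So there are open sets $B_1\subset V_1\cap\mathcal{R}_1$ and $B_2\subset V_2$ with $\psi|_{B_1}:B_1\to B_2$ a measure-preserving isometry. After shrinking, we may take $B_2\subset\mathcal{R}_2$; the dimensions agree, and the $C^1$ structure is metrically determined on small balls. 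By Lemma \ref{lem:heat eigen}, the heat kernel on $B_1$ (equivalently on $B_2$ via $\psi$) determines the same spectral data, up to orthogonal transformations in eigenspaces. By Lemma \ref{prop-distance}, the two sets of local distance functions then coincide under $\psi$:
\[
R_{B_2}(X_2)=\{r\circ\psi^{-1}: r\in R_{B_1}(X_1)\}.
\]

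Next, define $\Psi$. For $x\in X_1$, the function $r_x|_{B_1}\circ\psi^{-1}$ lies in $R_{B_2}(X_2)$, so it equals $r_y|_{B_2}$ for some $y\in X_2$. The key injectivity claim is that the map $x\mapsto r_x|_{B}$ is injective on an RCD space. This is exactly (1) of Theorem \ref{thm:rcd structure} (Yeh's result), because $B$ has positive measure. So $y$ is unique, and we set $\Psi(x):=y$. Applying the same construction to $\psi^{-1}$ gives an inverse map, so $\Psi$ is bijective. For $x\in B_1$, the point $\psi(x)$ satisfies $r_{\psi(x)}=r_x\circ\psi^{-1}$ on $B_2$ because $\psi$ is an isometry there, so $\Psi=\psi$ on $B_1$.

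Continuity needs two facts. First, the embedding $x\mapsto r_x|_{B}$ into $C(\overline B)$ with the sup norm is continuous, since it is $1$-Lipschitz. Second, it is injective, and $X$ is compact, so it is a homeomorphism onto its image $R_B(X)$. Then $\Psi$ is the composition of homeomorphisms $X_1\to R_{B_1}(X_1)\to R_{B_2}(X_2)\to X_2$, where the middle map is composition with $\psi^{-1}$ and is an isometry in sup norm. Hence $\Psi$ is a homeomorphism.

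It remains to show $\Psi|_{V_1}=\psi$ on all of $V_1$, not just on $B_1$. For $x\in V_1$, Varadhan's asymptotics (Proposition \ref{prop:varadhan}) applied to $p_{X_1}(x,z,t)=p_{X_2}(\psi(x),\psi(z),t)$ with $z\in B_1$ gives $\dist_1(x,z)=\dist_2(\psi(x),\psi(z))$. So $r_x|_{B_1}\circ\psi^{-1}=r_{\psi(x)}|_{B_2}$, and injectivity yields $\Psi(x)=\psi(x)$.

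The main obstacle is verifying that Lemma \ref{prop-distance} really identifies the two sets of local distance functions through $\psi$. The data are known only up to orthogonal transformations in eigenspaces, so the Blagoveščenskiĭ identity \eqref{eq-Blag} and the Boundary Control step must be shown to depend only on the invariant quantities $Q_j$. They do, since inner products of the waves $u^f(\cdot,t)$ are sums over eigenspaces of products of Fourier coefficients, which are invariant under the orthogonal ambiguity. One must also check that the Boundary Control constructions use only $\dist$ and $\meas$ on $B_i$, which are matched by $\psi$.
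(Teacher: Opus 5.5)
Your proposal is correct and follows the paper's proof essentially step for step: Lemma \ref{lem:surjective} gives regular open sets $B_1\subset V_1$, $B_2\subset V_2$ matched by the measure-preserving isometry $\psi|_{B_1}$; Lemma \ref{prop-distance} identifies the local distance functions on $B_1$ and $B_2$ through $\psi$; injectivity of $x\mapsto r_x|_{B_i}$ (Yeh, via non-branching) together with compactness gives homeomorphisms onto the images; and $\Psi$ is the resulting composition. Your last step, checking $\Psi|_{V_1}=\psi$ directly from Varadhan distance preservation on $V_1\times B_1$ and injectivity on all of $B_1$, is a slightly more direct version of the paper's contradiction argument with an auxiliary ball $B'$, and you also verify the interior-point hypothesis of Lemma \ref{lem:surjective}, which the paper leaves implicit.
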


\begin{proof}
By Varadhan's asymptotics, $\psi$ is distance-preserving on $V_1$. By Lemma \ref{lem:surjective}, there exist open subsets $B_1\subset V_1$ and $B_2\subset V_2$ such that
\begin{equation}\label{map-isometry}
\psi|_{B_1}: B_1\to B_2 \;\textrm{ is a measure-preserving isometry.}
\end{equation}
Without loss of generality, we can assume that $B_1\subset \mathcal{R}_{X_1}$ and $B_2\subset \mathcal{R}_{X_2}$; otherwise, we can restrict the domain of $\psi$ to $B_1\cap \mathcal{R}_{X_1} \cap \psi^{-1}(B_2\cap \mathcal{R}_{X_2})$.
In particular, it follows that $\psi|_{B_1}$ preserves the $C^1$-Riemannian metrics due to \cite{CH70,Taylor06}.

For $i=1,2$, we claim that the map $R_i:X_i\to L^{\infty}(B_i)$, defined by 
$$R_i(x)= r_x|_{B_i},$$ 
is a homeomorphism onto its image. This is because the map $R_i$ is continuous and injective. 
Indeed,
equipped $R_i(X_i)$ with $L^{\infty}(B_i)$-norm, it follows from the triangle inequality that the map $x\mapsto r_x|_{B_i}$ is continuous.
The injectivity is due to \cite[Th. 4]{Ye} and geodesic non-branching in RCD spaces \cite{D}, which is stated in Theorem \ref{thm:rcd structure}(1).
For the convenience of readers, we include a short argument below in our particular case of $B_i$ being open.
Suppose there exist two points $x,y\in X_1$ such that $r_x=r_y$ on $B_1$ but $x\neq y$. 
Let $\bar{B}\subset B_1$ be a closed metric ball centered at $p$ not containing $x,y$.
Let $z\in \partial B$ be a point such that $\dist_{X_1}(x,z)=\dist_{X_1}(x,\partial B)$, and the same is also satisfied by $y$ due to $r_x=r_y$.
Let $[zx], [zy]$ be a shortest path from $z$ to $x,y$ respectively. Taking any shortest path $[pz]$ from the center $p$ to $z$, the paths $[pz]\cup [zx]$ and $[pz]\cup [zy]$ are both shortest paths from $p$ that agree on $[pz]$. Thus $x=y$ due to geodesic non-branching.

Since $\psi|_{B_1}:B_1\to B_2$ is a measure-preserving isometry that preserves the heat kernel by assumption, the local distance functions $R_i(X_i)$ on $B_i$ coincide due to Lemma \ref{prop-distance}.
Identifying $B_2$ with $B_1$ via $\psi$,
we can define $\Psi:X_1\to X_2$ by 
\begin{equation} \label{def-homeo}
\Psi:= R_2^{-1}\circ R_1,
\end{equation}
which is a homeomorphism, since $R_i$ is a homeomorphism onto its image.

It remains to show that $\Psi|_{V_1}=\psi$.
Suppose that there exists a point $q\in V_1$ such that $\Psi(q)\neq \psi(q)$. Let us take an open metric ball $B'\subset B_1$ such that its closure does not contain $q$.
By definition \eqref{def-homeo}, $r_q=r_{\Psi(q)}\circ \psi$ on $B_1$.
Then for an arbitrary point $z\in B'$, since $\psi$ is distance-preserving on $V_1$, we have
$$\dist_{X_1}(q,z)=\dist_{X_2}(\psi(q),\psi(z))=\dist_{X_2}(\Psi(q),\psi(z)).$$
In particular, $\psi(q),\Psi(q)\notin \psi(B')$.
Ranging over all $z\in B'$, we see that $r_{\Psi(q)}=r_{\psi(q)}$ on an open subset $\psi(B') \subset B_2$.
Using the injectivity of $x\mapsto r_x|_{\psi(B')}$, we conclude that $\Psi(q)= \psi(q)$.
\end{proof}

\begin{proposition} \label{isometry-eigen}
For $i=1,2$, let $X_i$ be two compact $\RCD(K,N)$ spaces in the class specified in Theorem \ref{thm:unique almost}.
Let $V_i\subset X_i$ be open subsets. 
Suppose that there is a map $\psi:V_1\to V_2$, such that 
$$p_{X_1}(x,y,t)=p_{X_2}(\psi(x),\psi(y),t),\quad \textrm{for all }x,y\in V_1,\, t\in \mathbb{R}_+.$$
Let $\Psi:X_1\to X_2$ be the homeomorphism constructed in Proposition \ref{isometry-homeo}.
Then there exist complete families of orthonormalized eigenfunctions $\{\phi_{i;j}\}_{j\in \mathbb{N}}$ such that 
$$\phi_{1;j}=\phi_{2;j}\circ \Psi \; \textrm{ on }X_1, \quad \textrm{ for all }j,$$
where $\phi_{i;j}$ denotes the $j$-th eigenfunction of $X_i$.
\end{proposition}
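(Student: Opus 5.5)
The plan is to transport eigenfunctions through the Boundary Control method, using the approximate controllability from Lemma \ref{prop-control} together with the identification of local distance functions used to build $\Psi$.

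\textbf{Choice of eigenfunctions on the control set.} As in Proposition \ref{isometry-homeo}, I restrict to open sets $B_1\subset V_1\cap\mathcal{R}_{X_1}$ and $B_2=\psi(B_1)$ on which $\psi$ is a measure-preserving isometry. Since $\psi$ preserves the heat kernel on $B_1$, Lemma \ref{lem:heat eigen} shows the eigenvalues coincide with multiplicities. It also gives $Q^{(1)}_j(x,y)=Q^{(2)}_j(\psi(x),\psi(y))$ for $x,y\in B_1$. Fix an orthonormal eigenbasis $\{\phi_{2;j}\}$ of $X_2$. The polar-decomposition argument in the proof of Lemma \ref{lem:heat eigen} lets me pick, in each eigenspace of $X_1$, an orthonormal basis with $\phi_{1;j}=\phi_{2;j}\circ\psi$ on $B_1$. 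Both restricted families are determined by the same $Q_j$ up to an orthogonal matrix, so I absorb that matrix into the choice on $X_1$. By unique continuation (Theorem \ref{theorem-uc-general}) the restrictions to $B_1$ determine the eigenfunctions, so this choice is well defined.

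\textbf{Propagation to all of $X_1$.} For a source $f\in C_c(B_1\times(0,\tau))$, set $\tilde f=f\circ(\psi^{-1}\times\mathrm{id})$, supported in $B_2$. By \eqref{eq-Blag}, $\psi$ is measure-preserving on $B_1$ and the $\phi_{i;j}$ agree there. Hence $u^f_1$ and $u^{\tilde f}_2$ have identical Fourier coefficients for every $t$. Consequently $\langle u^f_1(\tau),u^h_1(s)\rangle=\langle u^{\tilde f}_2(\tau),u^{\tilde h}_2(s)\rangle$, and $\langle u^f_1(\tau),\phi_{1;j}\rangle=\langle u^{\tilde f}_2(\tau),\phi_{2;j}\rangle$. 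Define $U: u^f_1(\tau)\mapsto u^{\tilde f}_2(\tau)$. By the preserved inner products and Lemma \ref{prop-control}, $U$ extends to a unitary map $L^2(X_1)\to L^2(X_2)$ with $U\phi_{1;j}=\phi_{2;j}$. The step that needs care is localisation. By the Boundary Control construction behind Lemma \ref{prop-distance}, for every $y\in X_1$ the sets of $u^f(\tau)$ with sources supported near chosen points $z_k\in B_1$ and times $\tau_k$ approximate $L^2$ of intersections of domains of influence $\bigcap_k X(B(z_k,\tau_k))$. These sets are described purely through distances from points of $B_1$, so $U$ maps them to the corresponding sets in $X_2$. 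Because $R_2\circ\Psi=R_1$ (distance functions transported via $\psi$), these sets correspond under $\Psi$, and they shrink to $\{y\}$ and $\{\Psi(y)\}$ respectively.

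\textbf{Conclusion.} Testing against indicator functions of shrinking neighbourhoods $W$ of $y$ gives
$$\frac{1}{\meas_{X_1}(W)}\int_W\phi_{1;j}\,\di\meas_{X_1} \approx \frac{1}{\meas_{X_2}(\Psi(W))}\int_{\Psi(W)}\phi_{2;j}\,\di\meas_{X_2}.$$
This needs two inputs: $\meas_{X_1}(W)=\meas_{X_2}(\Psi(W))$, which follows from $U$ being unitary and mapping $1_W$ to a function supported in $\Psi(W)$, since $U1_{W}$ has the same norm and $U$ preserves $\phi_0$-components; and continuity of eigenfunctions. Together they yield $\phi_{1;j}(y)=\phi_{2;j}(\Psi(y))$.

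\textbf{Main obstacle.} I expect the hardest step to be the rigorous identity $U(L^2(W))=L^2(\Psi(W))$, meaning that $U$ acts as composition with $\Psi^{-1}$ up to a density. I would first try to show that $U$ intertwines multiplication by indicators of domains of influence with their images. From this, $U g=(g\circ\Psi^{-1})\theta$ for a unimodular $\theta$. The condition $U\phi_{1;0}=\phi_{2;0}$ with positive constants, together with the equal-mass argument, should force $\theta\equiv1$ and show that $\Psi$ pushes forward $\meas_{X_1}$ to $\meas_{X_2}$.
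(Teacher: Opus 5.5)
Your route is the paper's. You match the spectral data on a control set, derive the \cite[Lem.~6.9]{KLLY}-type identity \eqref{eq-integral-eigen} from \eqref{eq-Blag} and Lemma~\ref{prop-control}, and average over sets shrinking to $p$ and $\Psi(p)$. Two steps, however, do not work as written.

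First, intersections of domains of influence cannot shrink to a point. Suppose $\dist(y,B_1)>\diam(B_1)$. Then every intersection $\bigcap_k X(B(z_k,\delta),\tau_k)$ with $z_k\in B_1$ that contains $y$ also contains all of $B_1$, because such sets only impose upper bounds on $\dist(\cdot,z_k)$. You need two-sided constraints, i.e.\ differences $X(O,s_+)\setminus X(O,s_-)$ with $s_\pm=r_y(\xi_l)\pm 2/k$, as in \eqref{def-I12}. Injectivity of $x\mapsto r_x|_{B_1}$ then gives $\bigcap_k I_{1;k}=\{y\}$.

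Second, the step you flag as the main obstacle is where the proof actually happens, and it is easier than your plan. The composition-operator form $Ug=(g\circ\Psi^{-1})\theta$ is stronger than the proposition itself and is not needed.

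\begin{itemize}
\item Your $U$ is just $\phi_{1;j}\mapsto\phi_{2;j}$, so it sends $u_1^f(s)\mapsto u_2^{\tilde f}(s)$ for every $s$.
\item For open $O\subset B_1$, Theorem~\ref{prop-finite-propagation} and Lemma~\ref{prop-control} show that $L^2(X_1(O,s))$ is the closure of $\{u_1^f(s): f\in C_c(O\times(0,s))\}$. The same holds for $X_2(\psi(O),s)$. Hence $UP_{X_1(O,s)}U^{-1}=P_{X_2(\psi(O),s)}$, where $P_Ag:=1_Ag$.
\item These projections commute, so $P_{A\cap B}=P_AP_B$ and $P_{A\setminus B}=P_A-P_AP_B$ give $UP_{I_1}U^{-1}=P_{I_2}$ for the sets in \eqref{def-I12}.
\item Apply this to the constant $\phi_{1;0}=\phi_{2;0}$. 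You get $U1_{I_1}=1_{I_2}$, i.e.\ $\int_{I_1}\phi_{1;j}\,\di\meas_{X_1}=\int_{I_2}\phi_{2;j}\,\di\meas_{X_2}$ for all $j$, including $\meas_{X_1}(I_1)=\meas_{X_2}(I_2)$. This is \eqref{eq-integral-eigen} with $m=0$.
\end{itemize}

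It is this equality of integrals, not merely equal measures plus continuity, that turns your ``$\approx$'' into an identity. As stated, your Cauchy--Schwarz argument gives only $\meas_{X_1}(W)\le\meas_{X_2}(\Psi(W))$. Running it also for $U^{-1}$ gives equality, and the equality case of Cauchy--Schwarz then yields $U1_W=1_{\Psi(W)}$, which is what you actually need.

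Finally, $\Psi(I_{1;k})=I_{2;k}$ follows from $r_x=r_{\Psi(x)}\circ\psi$ on $B_1$. Continuity of the eigenfunctions then finishes the argument as in the paper.
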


\begin{proof}
From Proposition \ref{isometry-homeo}, we construct a homeomorphism $\Psi:X_1\to X_2$ such that $\Psi|_{V_1}=\psi$. In particular, the restriction $\psi:V_1\to \psi(V_1)$ is a homeomorphism for which $\psi(V_1)$ is open in $X_2$, and thus a measure-preserving isometry by the proof of Lemma \ref{lemma-map-isometry-general}. Without loss of generality, let us assume that $V_2=\psi(V_1)$.

By Lemma \ref{lem:heat eigen}, since $\psi$ preserves the heat kernel, the spectral data on $V_i$ coincide (after identifying $V_2$ with $V_1$ via $\psi$).
Namely, the eigenvalues for the two spaces coincide, and
there exist complete families of orthonormalized eigenfunctions $\{\phi_{i;j}\}_{j\in \mathbb{N}}$, for $i=1,2$, such that
\begin{equation} \label{eigen-V}
\phi_{1;j}=\phi_{2;j} \circ \psi \; \textrm{ on }V_1, \quad \textrm{ for all }j.
\end{equation}
Using these spectral data, the formula \eqref{eq-Blag} and the approximate controllability of wave equation (Lemma \ref{prop-control}), \cite[Lem. 6.9]{KLLY} is valid in our setting:
\begin{equation} \label{eq-integral-eigen}
\int_{I_1\subset X_1} \phi_{1;j}\, \phi_{1;m} \,\di\meas_{X_1} = \int_{I_2\subset X_2} \phi_{2;j} \,\phi_{2;m} \,\di\meas_{X_2}, \quad \textrm{for all }j, m,
\end{equation}
where the sets $I_i$ are subsets of $X_i$ of the form
\begin{equation} \label{def-I12}
I_1=\bigcap_{l=1}^L \Big(X(U_{2l-1},s_{2l-1})\setminus X(U_{2l},s_{2l}) \Big), \quad I_2=\bigcap_{l=1}^L \Big(X(\psi(U_{2l-1}),s_{2l-1})\setminus X(\psi(U_{2l}),s_{2l}) \Big),
\end{equation}
for open subsets $U_l\subset V_1$ and $s_l\in \mathbb{R}$.
Recall that $X(V,\tau)=\{x\in X: \dist_X(x,V)<\tau\}$ is the domain of influence.
Taking $j=m=0$ in \eqref{eq-integral-eigen} so that $\phi_{1;0}=\phi_{2;0}=\textrm{const}>0$, then $\meas_{X_1}(I_1)=\meas_{X_2}(I_2)$.

Let $p\in X_1$ be arbitrary. Now we prove that
\begin{equation} \label{identify-eigen-p}
\phi_{1;j} (p) = \phi_{2;j}(\Psi(p)), \quad \textrm{ for all }j.
\end{equation}
The local distance function $r_p$ identifies with $r_{\Psi(p)}$ by definition \eqref{def-homeo}, that is,
\begin{equation} \label{identify-rx}
r_x= r_{\Psi(x)}\circ \psi \;\textrm { on }V_1, \quad\textrm{for any }x\in X_1.
\end{equation}
Because the map $x\mapsto r_x$ is injective, we can find sequences of sets $I_{1;k}\subset X_1$ and $I_{2;k}\subset X_2$, for $k\in \mathbb{N}_+$, such that
\begin{equation} \label{thin-slices-p}
\{p\}=\bigcap_{k} I_{1;k}, \quad \big\{\Psi(p) \big\}=\bigcap_{k} I_{2;k}.
\end{equation}
Indeed, one can define $I_{1;k}$ by taking $U_{2l-1}=U_{2l}$ in \eqref{def-I12} to be a small neighborhood (say, of radius $1/2k$) centered at each point $\xi_l$ of a finite $1/k$-net in $V_1$, and take $s_{2l-1}=r_p(\xi_l)+2/k$, $s_{2l}=r_p(\xi_l)-2/k$.
Observe that each $I_{1;k}$ contains a ball $B(p,1/2k)$, and $I_{1;k+1}\subset I_{1;k}$ for all $k$.
For $I_{2;k}$, one can take the domains $\psi(U_{2l})$ with the same numbers $s_{2l-1},s_{2l}$ due to \eqref{identify-rx}.
Then \eqref{eq-integral-eigen} yields $\meas_{X_1}(I_{1;k})=\meas_{X_2} (I_{2;k})>0$ for all $k$.
Now let $m=0$ and any $j\geq 1$,
\begin{equation}
\frac{1}{\meas_{X_1}(I_{1;k})} \int_{I_{1;k}} \phi_{1;j}\, \phi_{1;0} \,\di\meas_{X_1} = \frac{1}{\meas_{X_2}(I_{2;k})} \int_{I_{2;k}} \phi_{2;j} \,\phi_{2;0} \,\di\meas_{X_2}, \quad \textrm{ for all }k.
\end{equation}
By \eqref{thin-slices-p} and continuity of eigenfunctions, \eqref{identify-eigen-p} follows by taking $k\to \infty$.
\end{proof}

From Proposition \ref{isometry-eigen}, we can use Varadhan's asymptotics and short-time diagonal asymptotics in RCD spaces to prove the uniqueness result.

\begin{theorem}
\label{uniqueness-bijection}
For $i=1,2$, let $X_i$ be two compact $\RCD(K,N)$ spaces in the class specified in Theorem \ref{thm:unique almost}.
Let $V_i\subset X_i$ be open subsets.
Suppose that there is a map $\psi:V_1\to V_2$, such that 
$$p_{X_1}(x,y,t)=p_{X_2}(\psi(x),\psi(y),t),\quad \textrm{for all }x,y\in V_1,\, t\in \mathbb{R}_+.$$
Then there exists a unique measure-preserving isometry $\Psi:X_1\to X_2$ satisfying $\Psi|_{V_1}=\psi$. 
\end{theorem}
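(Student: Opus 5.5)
The plan is to take $\Psi$ to be the homeomorphism $\Psi=R_2^{-1}\circ R_1$ from Proposition \ref{isometry-homeo}. By construction it satisfies $\Psi|_{V_1}=\psi$. We then show it preserves heat kernels globally and conclude with Theorem \ref{thm:global rigidity}.

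\textbf{Global heat kernel preservation.} By Proposition \ref{isometry-eigen}, there are complete orthonormal families $\{\phi_{i;j}\}$ with $\phi_{1;j}=\phi_{2;j}\circ\Psi$ on all of $X_1$. By Lemma \ref{lem:heat eigen} (applied via $\psi$ on $V_1$), the eigenvalues coincide: $\lambda^{X_1}_j=\lambda^{X_2}_j$ with multiplicities, since the heat kernel data on $V_1$ and $\psi(V_1)$ agree. Plugging these into the expansion (\ref{eq:heat exp}), which converges in $C(X\times X)$, gives
\begin{equation*}
p_{X_1}(x,y,t)=\sum_j e^{-\lambda_j t}\phi_{1;j}(x)\phi_{1;j}(y)=\sum_j e^{-\lambda_j t}\phi_{2;j}(\Psi(x))\phi_{2;j}(\Psi(y))=p_{X_2}(\Psi(x),\Psi(y),t)
\end{equation*}
for all $x,y\in X_1$ and $t>0$.

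\textbf{Isometry.} Theorem \ref{thm:global rigidity} now applies to $\Psi:X_1\to X_2$ and shows it is a metric measure isometry. Alternatively, Varadhan's asymptotics (Proposition \ref{prop:varadhan}) gives distance preservation. Since $\Psi$ is already a bijection (a homeomorphism), Lemma \ref{lemma-map-isometry-general} with $r>\mathrm{diam}$ gives measure preservation, through the short-time diagonal asymptotics (\ref{short-time-diagonal}) and part (5) of Theorem \ref{thm:rcd structure}.

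\textbf{Uniqueness.} Let $\Psi'$ be another measure-preserving isometry extending $\psi$. For every $x\in X_1$, both $r_{\Psi(x)}$ and $r_{\Psi'(x)}$ agree with $r_x\circ\psi^{-1}$ on the open set $\psi(V_1)$. This set is open by the proof of Proposition \ref{isometry-eigen}; alternatively one can restrict to the ball $B_2$ from Lemma \ref{lem:surjective}. The injectivity of $z\mapsto r_z|_{B}$ on open sets, established in the proof of Proposition \ref{isometry-homeo} via non-branching (Theorem \ref{thm:rcd structure}(1)), then forces $\Psi(x)=\Psi'(x)$.

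\textbf{Main obstacle.} The delicate step is the first one: ensuring the eigenvalue lists and the eigenfunction identifications match globally, including multiplicities. This rests on Proposition \ref{isometry-eigen}, which supplies the identification on all of $X_1$ and not just on $V_1$, on the unique continuation in Lemma \ref{lem:heat eigen}, and on the uniform convergence of the heat expansion.
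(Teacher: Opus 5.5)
Your proposal is correct and follows essentially the same route as the paper. Both arguments take the homeomorphism $\Psi=R_2^{-1}\circ R_1$ from Proposition \ref{isometry-homeo}, use the global eigenfunction identification of Proposition \ref{isometry-eigen} and the heat kernel expansion to get heat kernel preservation on all of $X_1\times X_1$, and then conclude it is a measure-preserving isometry; your citation of Theorem \ref{thm:global rigidity} at that step is a harmless repackaging of the paper's Varadhan-plus-short-time-diagonal argument. Uniqueness is proved exactly as in the paper, via injectivity of $z\mapsto r_z$ restricted to the open set $\psi(V_1)$.
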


\begin{proof}
By Proposition \ref{isometry-homeo} and Lemma \ref{lemma-map-isometry-general}, it follows that the map $\psi:V_1\to \psi(V_1)$ is a measure-preserving isometry.
Then by Lemma \ref{lem:heat eigen}, the heat kernel on $V\times V\times \mathbb{R}_+$ determines the spectral data $\{\lambda_j,\phi_j|_V\}$ on $V$. Namely, the eigenvalues of the two spaces coincide, and
there exist complete families of orthonormalized eigenfunctions $\{\phi_{i;j}\}_{j\in \mathbb{N}}$, for $i=1,2$, such that
\begin{equation} \label{eigen-V}
\phi_{1;j}=\phi_{2;j} \circ \psi \; \textrm{ on }V_1, \quad \textrm{ for all }j,
\end{equation}
where $\phi_{i;j}$ denotes the $j$-th eigenfunction of $X_i$.
By Lemma \ref{prop-distance}, this identifies the set of local distance functions $R_V(X)$ on $V$.
Then by Proposition \ref{isometry-homeo}, this determines a homeomorphism 
\begin{equation}
\Psi:X_1\to X_2, \quad \textrm{with }\,\Psi|_{V_1}=\psi. 
\end{equation}
Then by Proposition \ref{isometry-eigen},
\begin{equation} \label{identify-eigen}
\phi_{1;j} (x) = \phi_{2;j}(\Psi(x)), \quad\textrm{for any }x\in X_1,
\end{equation}
where $\phi_{i;j}$ is the $j$-th orthonormalized eigenfunction of $X_i$ such that \eqref{eigen-V} holds.

Now from \eqref{identify-eigen} and the heat kernel expansion, we see that
\begin{equation}
p_{X_1}(x,y,t)=p_{X_2}(\Psi(x),\Psi(y),t),\quad \textrm{for any }x,y\in X_1, \; t\in \mathbb{R}_+.
\end{equation}
Then Varadhan’s asymptotics on RCD spaces yields that 
$$\dist_{X_1}(x,y)=\dist_{X_2}(\Psi(x),\Psi(y)),\quad \textrm{for any }x,y\in X,$$
so $\Psi:X_1\to X_2$ is actually an isometry. 
Again using \eqref{short-time-diagonal},
we have $\rho_1(x)=\rho_2(\Psi(x))$ for any regular point $x$. 
Thus $\Psi$ is a measure-preserving isometry. 

\smallskip
For the uniqueness of isometry $\Psi$, suppose that there exists another isometry $\Psi': X_1\to X_2$ satisfying $\Psi'|_{V_1}=\psi$. Assume that there exists $q\in X_1\setminus V_1$ such that $\Psi(q)\neq \Psi'(q)$. 
For an arbitrary point $z\in V_1$, by definition, 
$$\dist_{X_1}(q,z)=\dist_{X_2}(\Psi(q),\psi(z))=\dist_{X_2}(\Psi'(q),\psi(z)).$$
Ranging over all $z\in V_1$ hence all $\psi(z)\in \psi(V_1)$, we see
$$r_{\Psi(q)}|_{\psi(V_1)}=r_{\Psi'(q)}|_{\psi(V_1)}\quad \textrm{in }X_2.$$
This implies $\Psi(q)= \Psi'(q)$ since $x\mapsto r_x|_{\psi(V_1)}$ is injective (as $\psi(V_1)$ is open in $X_2$), which is a contradiction.
\end{proof}

\begin{remark} \label{remark-reconstruction}
Under the setting of Theorem \ref{thm:unique almost}, our method actually proves more than uniqueness: it constructs a metric-measure isometric copy of the original space $(X,\dist,\meas)$ on which the heat kernel data (or complete spectral data)
are given on an open subset $V\subset X$ (while the space is unknown outside of $V$). 
Namely, the geometric Boundary Control method constructs a topological space $(R_V(X),L^{\infty}(V))$ that is homeomorphic to $(X,\dist)$, and further determines the values of a complete family of orthnormalized eigenfunctions on $X$ and hence the heat kernel on $X\times X \times \mathbb{R}_+$.
Recall that $R_V(X)=\{r_x: x\in X\}$ is the set of local distance functions on $V$.
Then one can compute a metric $\widehat{\dist}$ on $R_V(X)$ via Varadhan's asymptotics such that $(R_V(X),\widehat{\dist})$ is isometric to $(X,\dist)$:
\begin{equation}
\widehat{\dist}(r_x,r_y)^2:=\dist(x,y)^2=-\lim_{t\to 0^+} 4t\log p(x,y,t), \quad \textrm{ for any }x,y\in X.
\end{equation}
For any regular point $x$ of $(R_V(X),\widehat{d})$, the short-time diagonal asymptotics \eqref{short-time-diagonal} determines the density 
\begin{equation}\label{eq-density-rx}
\widehat{\rho}(r_x):=\rho(x)=\lim_{t\to 0^+} \frac{c_n}{\omega_n t^{\frac{n}{2}} p(x,x,t)}, \quad \textrm{ for any $n$-regular point }x,
\end{equation}
where the dimension $n$ is the unique integer such that the limit in \eqref{eq-density-rx} belongs to $(0,+\infty)$.
This determines a measure $\widehat{\meas}$ such that $(R_V(X),\widehat{\dist},\widehat{\meas})$ is metric-measure isometric to $(X,\dist,\meas)$.
\end{remark}

\section{Quantitative Gel'fand's inverse problem under bounded Ricci curvature}\label{sec-stability}

In this section, using Theorem \ref{uniqueness-bijection} we provide a couple of \textit{quantitative} Gel'fand's inverse problems, in other words, stability of inverse problems. 

\begin{definition}[Spectral approximation]\label{def:spec app}
Let $(X,x), (Y,y)$ be pointed compact $\RCD(K,N)$ spaces for some $K \in \mathbb{R}$ and some $N \in [1, \infty)$, and let 
\begin{equation}
\psi:(B_r(x), x) \to (B_r(y), y)
\end{equation}
be a map.
\begin{enumerate}
    \item{(Spectral heat-approximation)} $\psi$ is called an \textit{(spectral) $\epsilon$-approximation of the heat kernel}  if the following hold:
\begin{enumerate}
\item $\psi$ is an $\epsilon$-GH approximation;
\item we have
\begin{equation}\label{eq:heat comparison}
 1-\epsilon \le \frac{p_Y(\psi(x_1), \psi(x_2), t)}{p_X(x_1, x_2, t)} \le 1+\epsilon,
\end{equation}
holds for all $t \in [\epsilon, 1]$ 
and $x_1, x_2 \in B_r(x)$, where $p_X, p_Y$ denote the heat kernels of $X, Y$, respectively.
\end{enumerate}
    \item{(Spectral eigen-approximation)} $\psi$ is called an \textit{(spectral) $\epsilon$-approximation of the eigenfunctions} if the following hold:
\begin{enumerate}
\item $\psi$ is an $\epsilon$-GH approximation;
\item denoting by $k$ the integer part of $\frac{1}{\epsilon}$, there exist spectral data $\{\phi_{i}^X\}_{i=0}^{\infty}, \{\phi_{i}^Y\}_{i=0}^{\infty}$ of $X, Y$, respectively, such that 
\begin{equation}\label{eq:eigenfunction close}
    |\lambda_i^Y-\lambda_i^X|+\left| \phi_{i}^Y \circ \psi(z)-\phi_{i}^X(z)\right| <\epsilon, \quad \text{for any $z \in B_r(x)$ and any $1 \le i \le k$,}
\end{equation}
is satisfied.
\end{enumerate}
\end{enumerate}
\end{definition}
Let us prove that (a) in both (1) and (2) in the definition above can be actually removed.
\begin{proposition}[Weaker formulation of spectral approximation]\label{prop:weak}
Let $X, Y, \psi$ be as in the setting of Definition \ref{def:spec app}, with 
\begin{equation}\label{eq:diam}
        \mathrm{diam}(X), \diam(Y), \meas_X(X), \meas_Y(Y) \in [D^{-1}, D],\quad \text{for some $D>1$.}
    \end{equation}
    Then for some $\varepsilon=\varepsilon_{K, N, r, D}(\epsilon)$ (recall notation (\ref{eq:small})), 
we have the following. 
\begin{enumerate}
    \item If $B_r(y) \subset B_{\epsilon}(\psi(B_r(x)))$ holds, and (\ref{eq:heat comparison}) is satisfied for all $t \in [\frac{1}{2}, 1]$ and $x_1, x_2 \in B_r(x)$, then $\psi|_{B_{\frac{r}{4}}(x)}$ is an $\varepsilon$-approximation of the heat kernel and it is also an $\varepsilon$-mGH approximation;
    \item If (b) of (2) of Definition \ref{def:spec app} holds, then (\ref{eq:heat comparison}) is satisfied for all $t \in [\varepsilon, 1]$ and $x_1, x_2 \in B_r(x)$. 
\end{enumerate}
\end{proposition}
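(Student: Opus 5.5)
Both parts will be proved by the standard contradiction/compactness scheme of Corollary \ref{cor:global rigidity}, using the mGH compactness of $\RCD(K,N)$ spaces, the continuity of heat kernels \eqref{eq:heat conv} and the Gaussian bounds of Theorem \ref{thm:gaussian}.

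\textbf{Part (1).} Suppose the conclusion fails. Then there are $\tau>0$, spaces $X_i,Y_i$ satisfying \eqref{eq:diam}, and maps $\psi_i$ with $B_r(y_i)\subset B_{\epsilon_i}(\psi_i(B_r(x_i)))$ and \eqref{eq:heat comparison} on $t\in[\frac12,1]$ with $\epsilon_i\to0$, such that $\psi_i|_{B_{r/4}(x_i)}$ is not a $\tau$-approximation in the required sense.

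\emph{Passing to limits.} After a subsequence, $(X_i,x_i)\to(X,x)$ and $(Y_i,y_i)\to(Y,y)$ in pmGH. The two-sided bounds on $\diam$ and on total measure give the uniform bounds on ball volumes needed for compactness. The ratio bound together with the lower Gaussian bound at $t\in[\frac12,1]$ gives uniform local control of the distances $\dist(\psi_i(x_1),\psi_i(x_2))$. The gradient estimate \eqref{eq-gradient-Gaussian-estimate} gives equicontinuity of $p$. To get a limit map, I will first extract pointwise limits of $\psi_i$ on a countable dense subset of $B_r(x)$ by a diagonal argument. In the limit, $p_Y(\psi(a),\psi(b),t)=p_X(a,b,t)$ for $t\in[\frac12,1]$, and by analyticity of $t\mapsto p(\cdot,\cdot,t)$ (through the eigen-expansion \eqref{eq:heat exp}) this holds for all $t>0$.

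\emph{Continuity of the limit map.} I will obtain continuity from injectivity of $z\mapsto p_Y(z,\cdot,t)$, which follows from the heat kernel expansion and completeness of the eigenfunctions. This lets the map extend to $B_r(x)$. Varadhan's asymptotics (Proposition \ref{prop:varadhan}) then show that $\psi$ is distance preserving. The density hypothesis passes to the limit and shows that $\psi(B_r(x))$ is dense in $B_r(y)$. Hence $\psi$ is a surjective heat-kernel-preserving map onto the ball.

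\emph{Concluding part (1).} Lemma \ref{lemma-map-isometry-general} shows $\psi$ is a measure-preserving isometry. Uniform convergence then makes $\psi_i|_{B_{r/4}}$ an $\varepsilon$-mGH approximation. For the heat kernel ratio at small times $t\in[\varepsilon,1]$, I will use locally uniform convergence of $p$ for $t$ bounded below, combined with the positive lower Gaussian bound, to convert uniform closeness into ratio closeness. The shrinking to radius $r/4$ keeps boundary effects away.

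\textbf{Part (2).} I will again argue by contradiction. Closeness of the first $k=\lfloor 1/\epsilon\rfloor$ eigenpairs passes in the limit, and uses spectral convergence under mGH convergence as an input, to equality of all eigenvalues and eigenfunctions along $\psi$. The tail $\sum_{i>k}e^{-\lambda_i t}|\phi_i(z)\phi_i(w)|$ is bounded uniformly via \eqref{eq:eigenfunction-estimate} by $\sum_{i>k}C\lambda_i^{N/2}e^{-\lambda_i t}$, which is small when $t\ge\varepsilon$ for a suitable choice of $\varepsilon(\epsilon)\to0$. Hence the heat kernels agree up to a small additive error, which becomes a ratio error through the lower Gaussian bound. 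This part can in fact be done directly without contradiction.

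\textbf{Main obstacle.} The hard step is in part (1): extracting a limit map and showing its image is all of $B_r(y)$. A priori $\psi_i$ need not be a GH approximation, so both the equicontinuity of $\psi_i$ and the surjectivity onto the ball have to be extracted from heat kernel data alone together with the $\epsilon$-density hypothesis.
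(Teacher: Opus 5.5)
\textbf{Gap in part (1): nothing controls $\psi_i$ off the dense set $A$.} The mechanism you offer, ``the ratio bound together with the lower Gaussian bound at $t\in[\frac12,1]$ gives uniform local control of the distances'', does not work. Comparing the lower bound for $p_{Y_i}$ with the upper bound for $p_{X_i}$ at time $t$ (the volume ratio being controlled on the diagonal) gives only
$\dist_Y(\psi_i(x_1),\psi_i(x_2))^2\ge\frac{1-\epsilon'}{1+\epsilon'}\dist_X(x_1,x_2)^2-C't$, and symmetrically, with $C'=C'(K,N,\epsilon')$ coming from the constants of Theorem \ref{thm:gaussian}. For $t\sim 1$ the additive error is of order one, so this gives no equicontinuity of $\psi_i$. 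Equicontinuity of $p$ from the gradient bound is about $p$, not $\psi_i$. Your injectivity remark only concerns the limit map on $A$, where Varadhan already gives distance preservation directly.

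Consequently the two steps that carry part (1) are unjustified. First, ``the density hypothesis passes to the limit'' requires $\psi_i(x_i')\to\psi(x')$ for the points $x_i'$ supplied by $B_r(y_i)\subset B_{\epsilon_i}(\psi_i(B_r(x_i)))$, and these have nothing to do with $A$. Second, ``uniform convergence then makes $\psi_i|_{B_{r/4}}$ a $\tau$-mGH approximation'' needs the same convergence at arbitrary points of the ball. So does the contradiction at bad pairs and bad times $t\in[\tau,1]$.

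\textbf{How the paper closes it, and an alternative within your scheme.} The paper works quantitatively on the spaces themselves before passing to limits. Step 1 propagates \eqref{eq:heat comparison} from $[\frac12,1]$ down to $[\varepsilon,1]$ using analyticity in $t$. An analytic-continuation estimate for the bounded holomorphic function $z\mapsto p_Y(\psi(x_1),\psi(x_2),z)-p_X(x_1,x_2,z)$ on $\mathrm{Re}\,z>0$ does this. Step 2 then applies the Gaussian bounds at the small time $t=\varepsilon$, where the $C't$ error is small. This gives $|\dist_Y(\psi(x_1),\psi(x_2))-\dist_X(x_1,x_2)|\le\varepsilon(1+\dist_X(x_1,x_2))$ uniformly on $B_r(x)$. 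That almost-isometry turns your pointwise limit on $A$ into a uniform one, makes the limit map surjective, and gives the GH property directly.

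Alternatively, you can stay inside your compactness scheme, but you must run it on arbitrary pairs of convergent sequences, not only on $A$:
\begin{itemize}
\item Suppose $x_{j,i}\to x_j$ and $\psi_i(x_{j,i})\to z_j$ for $j=1,2$. Then \eqref{eq:heat conv} gives $p_Y(z_1,z_2,t)=p_X(x_1,x_2,t)$ on $[\frac12,1]$.
\item By analyticity this holds for all $t>0$, and Varadhan then gives $\dist_Y(z_1,z_2)=\dist_X(x_1,x_2)$.
\item Take $x_{2,i}$ to be the sequences defining $\psi$ on $A$ and let $a\to x_1$ in $A$. This shows $z_1=\psi(x_1)$, i.e.\ $\psi_i(x_i')\to\psi(x')$ along every convergent sequence, which is exactly the missing uniform convergence.
\end{itemize}
This is the uniform-in-$i$ version of the injectivity you had in mind. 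It also handles the bad pairs and bad times in the negation directly, and it avoids the quantitative Step 1. The paper's route instead yields the almost-isometry estimate for each space without any compactness.

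Part (2) is fine and matches the paper's direct argument: the head is controlled by the eigenpair closeness, the tail by \eqref{eq:eigenfunction-estimate} and the Weyl bound, and the lower Gaussian bound converts the additive error into a ratio error. No spectral convergence is needed there.
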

\begin{proof} First let us prove (1). The proof is divided into $3$ steps as follows, where in the sequel, $\varepsilon=\varepsilon_{K, N, r, D}(\epsilon)$.

\smallskip
\textbf{Step 1}:\textit{
    If (\ref{eq:heat comparison}) is satisfied for all $t \in [\frac{1}{2}, 1]$ and $x_1, x_2 \in B_r(x)$, then (\ref{eq:heat comparison}) holds for all $t \in [\varepsilon, 1]$ and $x_1, x_2 \in B_r(x)$.}

\smallskip
    This is done by observing the analyticity of the quotient 
    \begin{equation*}
        \frac{p_Y(\psi(x_1), \psi(x_2), t)}{p_X(x_1, x_2, t)}
    \end{equation*}
    in $t>0$ together with the Taylor expansion around $\frac{1}{2}$ of the radius $\frac{1}{2}-\varepsilon$.

\smallskip
    \textbf{Step 2}: \textit{If (\ref{eq:heat comparison}) is satisfied for all $t \in [\epsilon, 1]$ and $x_1, x_2 \in B_r(x)$, then 
    \begin{equation}\label{eq:almost bilip}
    -\varepsilon-(1+\varepsilon)\dist_X(x_1, x_2) \le \dist_Y(\psi(x_1), \psi(x_2)) \le (1+\varepsilon)\dist_X(x_1, x_2) +\varepsilon,
\end{equation}
for all $x_1, x_2 \in B_r(x)$.}

\smallskip
This is a direct consequence of a Gaussian estimate, Theorem \ref{thm:gaussian}, together with (\ref{eq:heat comparison}).

\smallskip
\textbf{Step 3}: \textit{If $B_r(y) \subset B_{\epsilon}(\psi(B_r(x)))$ holds, and (\ref{eq:heat comparison}) is satisfied for all $t \in [\epsilon, 1]$ and $x_1, x_2 \in B_r(x)$, then $\psi_{\bar B_r(x)}$ is an $\varepsilon$-measured Gromov-Hausdorff approximation, thus the proof of (1) is completed.}

\smallskip
The proof is done by a contradiction; If it is not the case, then there exist sequences of compact $\RCD(K, N)$ spaces $X_i, Y_i$ with (\ref{eq:diam}), $\tau>0$ and a sequence of maps $\psi_i:(B_r(x_i), x_i) \to (B_r(y_i), y_i)$ such that $\psi_i$ is an $\epsilon_i$-approximation of the heat kernel for some $\epsilon_i \to 0^+$ and that $\psi_i$ is not a $\tau$-measured Gromov-Hausdorff approximation. Thanks to the compactness of RCD spaces with respect to the mGH convergence, after passing to a subsequence, $(X_i, x_i), (Y_i, y_i)$ pmGH converge to compact $\RCD(K, N)$ spaces $(X,x), (Y, y)$, respectively. From now on let us construct a limit map $\psi:\bar B_r(x) \to \bar B_r(y)$ as follows.

First, take a countable dense subset $A$ of $\bar B_r(x)$ and for any $\bar x \in A$, find a sequence $\bar x_i \in B_r(x)$ converging to $\bar x$. Applying a diagonal argument with the compactness of $\bar B_r(y)$, after passing to a subsequence, with no loss of generality we can assume that $\psi_i(\bar x_i)$ converges to a point of $\bar B_r(y)$, which is denoted by $\psi(\bar x)$. Since $\psi:A \to \bar B_r(y)$ preserves the heat kernels, it also preserves the distances by Valadhan's asymptotics. Thus it has a unique continuous extension, still denoted by $\psi:\bar B_r(x) \to \bar B_r(y)$. It follows from the construction and our assumption that $\psi$ is a surjective distance-preserving map, namely an isometry as metric spaces. 
Then by Lemma \ref{lemma-map-isometry-general}, we know that $\psi$ is a metric measure isometry.
By the construction of $\psi$ together with \textit{Step 2}, we know that $\psi |_{B_r(x_i)}$ is a $\tau$-measured Gromov-Hausdorff approximation for sufficiently large $i$, which is a contradiction.

\smallskip
Next, let us prove (2).
Our assumption together with quantitative $L^{\infty}$-estimates on eigenfunctions (\ref{eq:eigenfunction-estimate}) yields
\begin{equation}\label{eq:heat kernel close}
    \left| p_X(x_1, x_2, t)-p_Y(\psi(x_1), \psi(x_2), t)\right| \le \varepsilon
\end{equation}
for all $t \in [\varepsilon, 1]$ and $x_1, x_2 \in B_r(x)$. This proves (2) because of a Gaussian estimate, Theorem \ref{thm:gaussian}. Thus we conclude.
\end{proof}

The following includes a quantitative converse of Lemma \ref{lem:heat eigen} in a general setting. 
\begin{proposition}[Equivalence between spectral and GH-approximations]\label{prop:equiv}
    Let $X, Y$ be pointed compact $\RCD(K, N)$ spaces for some $K \in \mathbb{R}$ and some $N \in [1, \infty)$ with (\ref{eq:diam}).
    Then for some $\varepsilon=\varepsilon_{K, N, r, D}(\epsilon)$ we have the following.
    \begin{enumerate}
        \item if a map $\psi: (B_r(x), x) \to (B_r(y), y)$ is an $\epsilon$-approximation of the eigenfunctions, then it is also an $\varepsilon$-approximation of the heat kernel;
        \item if a map $\psi:X \to Y$ is an $\epsilon$-mGH approximation, then $\psi$ is an $\varepsilon$-approximation of the eigenfunctions.
    \end{enumerate}
\end{proposition}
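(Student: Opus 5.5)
Part (1) follows almost directly from Proposition \ref{prop:weak}(2) together with the definitions. If $\psi$ is an $\epsilon$-approximation of the eigenfunctions, then (a) of Definition \ref{def:spec app}(1) holds by assumption: $\psi$ is an $\epsilon$-GH approximation. Condition (b) of Definition \ref{def:spec app}(2) holds as well. Proposition \ref{prop:weak}(2) then yields (\ref{eq:heat comparison}) for all $t\in[\varepsilon,1]$ and $x_1,x_2\in B_r(x)$.

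The mechanism is worth recording, since it is reused below. Split the expansion (\ref{eq:heat exp}) into the modes $i\le k=\lfloor 1/\epsilon\rfloor$ and the tail. For $t\ge\varepsilon$, the bounds (\ref{eq:eigenfunction-estimate}) control the tail uniformly by $\sum_{i>k}e^{-C^{-1}i^{2/N}t}C\lambda_i^{N/2}$, which is small once $k$ is large relative to $t^{-1}$. The first $k$ modes differ by at most $C k\,\epsilon\,\lambda_k^{N/4}$. Choosing $\varepsilon$ so that both pieces are small, and dividing by the lower Gaussian bound $p_X\ge c(K,N,D)>0$ on $[\varepsilon,1]$ from Theorem \ref{thm:gaussian} and (\ref{eq:diam}), gives the ratio bound. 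Hence $\psi$ is an $\max\{\epsilon,\varepsilon\}$-approximation of the heat kernel.

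For part (2) I would argue by contradiction with compactness, as in Corollary \ref{cor:global rigidity}. Suppose there are $\epsilon_i\to0$, spaces $X_i,Y_i$ satisfying (\ref{eq:diam}), and $\epsilon_i$-mGH approximations $\psi_i:X_i\to Y_i$ that fail to be $\tau$-approximations of the eigenfunctions. After passing to a subsequence, $X_i\to X$ and $Y_i\to Y$ in mGH by the compactness of $\RCD(K,N)$ spaces. Because $\psi_i$ are $\epsilon_i$-mGH approximations, $X$ and $Y$ are mm-isometric via a limit map $\Psi$, so we may identify $Y=X$, with $\psi_i$ converging to the identity.

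Next I would invoke spectral convergence under mGH convergence, in the RCD version of Cheeger--Colding proved in \cite{AHT}/\cite{GMS}. It gives $\lambda_j^{X_i}\to\lambda_j^X$ and $\lambda_j^{Y_i}\to\lambda_j^X$ for each $j$. Moreover, for any spectral datum $\{\phi_j\}$ of $X$, one can choose spectral data $\phi^{X_i}_j$ and $\phi^{Y_i}_j$ converging uniformly to $\phi_j$ for $j\le k$. Uniformity comes from Arzel\`a--Ascoli using the uniform Lipschitz bound in (\ref{eq:eigenfunction-estimate}).

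The main obstacle is multiplicity. Individual eigenfunctions need not converge, only eigenspace projections do. To handle this, group the indices $j\le k$ into clusters that converge to the same limit eigenvalue. Within each cluster, use the $L^2$-convergence of the spectral projections to choose orthonormal bases of $X_i$ and of $Y_i$ that both converge to one fixed orthonormal basis of the limit eigenspace, via Gram--Schmidt. Since $k$ is fixed (because $\tau$ is fixed, only $k\approx1/\tau$ modes matter), this is a finite procedure.

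With these bases, $\phi^{Y_i}_j\circ\psi_i-\phi^{X_i}_j\to\phi_j-\phi_j=0$ uniformly, by the Lipschitz bounds and the fact that $\psi_i\to\mathrm{id}$. Likewise $|\lambda^{Y_i}_j-\lambda^{X_i}_j|\to0$. This contradicts the choice of $\tau$. One caveat: Definition \ref{def:spec app}(2) uses $k=\lfloor1/\varepsilon\rfloor$ with $\varepsilon$ the output parameter. I would fix $\tau$ first, take $k=\lfloor1/\tau\rfloor$, and run the contradiction with that fixed $k$.
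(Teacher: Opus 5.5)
\textbf{Part (2).} There is a genuine gap at the multiplicity step, and it cannot be repaired, because (2) is false as stated. Gram--Schmidt inside a cluster gives an orthonormal basis of the sum of those eigenspaces of $X_i$ whose eigenvalues tend to a common limit $\mu$. When $\mu$ splits in $X_i$, these vectors mix eigenfunctions with different eigenvalues. So they are not eigenfunctions and do not form a spectral datum. A spectral datum of $X_i$ can only be rotated inside each \emph{exact} eigenspace, so its possible limits are dictated by how the cluster splits in $X_i$. The space $Y_i$ may split the same limit eigenspace along a different decomposition. In particular, your claim that an arbitrary spectral datum of the limit is a limit of spectral data of $X_i$ fails.

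Concretely, let $a\to0^+$ and set $X_a=\mathbb{R}^2/(\mathbb{Z}\times(1+a)\mathbb{Z})$, $Y_a=\mathbb{R}^2/((1+a)\mathbb{Z}\times\mathbb{Z})$, and $\psi_a(x,y)=((1+a)x,(1+a)^{-1}y)$. Then $\psi_a$ is a measure-preserving $(1+a)$-bi-Lipschitz bijection, hence an $O(a)$-mGH approximation. Both tori are $\RCD(0,2)$ (indeed Ricci-flat) and satisfy (\ref{eq:diam}) with $D=2$. On both, $\lambda_1=\lambda_2=4\pi^2(1+a)^{-2}<\lambda_3=\lambda_4=4\pi^2$. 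However, every $\lambda_1$-eigenfunction of $X_a$ depends only on $y$, whereas every $\lambda_1$-eigenfunction of $Y_a$ composed with $\psi_a$ depends only on $x$. Hence, for every choice of spectral data, $\phi^{Y_a}_1\circ\psi_a$ and $\phi^{X_a}_1$ are orthogonal unit vectors in $L^2(X_a)$. Therefore $\sup_{X_a}|\phi^{Y_a}_1\circ\psi_a-\phi^{X_a}_1|\ge\sqrt{2/(1+a)}>1$, and $\psi_a$ is never a $\tau$-approximation of the eigenfunctions for $\tau\le1$. This holds with $r\ge D$; for a fixed smaller $r$ the same happens once $\tau\ll r$, by comparing $\phi_1,\phi_2$ jointly. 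In the limit square torus, $4\pi^2$ has multiplicity $4$; it splits as $y$-modes below $x$-modes in $X_a$ and the reverse in $Y_a$.

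Your route is the one the paper indicates for (2) (``similarly via spectral convergence''), and the obstruction applies there too. Two things survive. One is the heat-kernel version, which follows from (\ref{eq:heat conv}) and the Gaussian lower bound with no choice of bases. The other is a version of (\ref{eq:eigenfunction close}) that allows orthogonal mixing within clusters of nearly equal eigenvalues, i.e.\ closeness of cluster spectral projections.

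\textbf{Part (1).} This is correct and takes a different, more direct route than the paper. The paper argues by contradiction: it passes to limit spaces and obtains a limit isometry on $B_r$ along which the limit spectral data agree exactly. It then concludes from the eigenfunction expansion of the heat kernel and spectral convergence. You observe instead that (1) is literally Proposition \ref{prop:weak}(2) plus hypothesis (a). This avoids compactness and gives an effective modulus.

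Your sketch of the mechanism does not close as written, though. With $k=\lfloor1/\epsilon\rfloor$ one has $k\epsilon\approx1$, so $Ck\epsilon\lambda_k^{N/4}\gtrsim k^{1/2}$ is not small. Instead:
\begin{itemize}
\item Truncate at $m=m(K,N,D,\varepsilon)$, chosen from the tail bound at $t\ge\varepsilon$.
\item Keep the factor $e^{-\lambda_it}$, so that each of the first $m$ differences is $O(\epsilon\,\varepsilon^{-N/2})$.
\item Take $\epsilon$ small enough that the total error is small compared with the Gaussian lower bound of $p_X$ on $[\varepsilon,1]$. That bound is of order $e^{-C/\varepsilon}$, so it depends on $\varepsilon$ and not only on $K,N,D$.
\end{itemize}
Also, the zeroth mode, $\meas_X(X)^{-1}$ versus $\meas_Y(Y)^{-1}$, is not covered by (\ref{eq:eigenfunction close}), which starts at $i=1$. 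You can recover it by evaluating both expansions at a small fixed time at two points of $B_r(x)$ a definite distance apart, where both kernels are small by the Gaussian upper bound.
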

\begin{proof}
The proof of (1) is done by a contradiction as in the proof of Proposition \ref{prop:weak} again, though we can also provide a direct proof via the Gaussian estimate. 
For reader's convenience, let us give the details. 

If it is not the case, then there exist sequences of compact $\RCD(K, N)$ spaces $X_i, Y_i$ with (\ref{eq:diam}), $\tau>0$ and a sequence of maps $\psi_i:(B_r(x_i), x_i) \to (B_r(y_i), y_i)$ such that $\psi_i$ is an $\epsilon_i$-approximation of the eigenfunctions for some $\epsilon_i \to 0^+$ and that $\Phi_i$ is not a $\tau$-approximation of the heat kernel. As in the proof above, after passing to a subsequence, we know;
\begin{itemize}
    \item $X_i, Y_i$ mGH-converge to compact $\RCD(K, N)$ spaces $X, Y$, respectively;
    \item there exists an isometry $\psi:(B_r(x), x) \to (B_r(y), y)$ as metric spaces for some $x \in X$ and some $y \in Y$ such that it is an $\epsilon$-approximation of the eigenfunctions for any $\epsilon>0$ and that it is not a $\tau$-approximation of the heat kernel, 
\end{itemize}
where we immediately used the spectral convergence established in \cite[Th. 7.8]{GMS} (see also \cite{H24, KS}). However, it is a contradiction because of the expansion of the heat kernels of $X, Y$ by eigenfunctions.

Similarly we can obtain (2) via \cite{GMS}, where we omit the proof.
\end{proof}
Let us focus on the class
\begin{equation}
    \mathcal{M}(n, D, v)
\end{equation}
defined as the set of all isometry classes of closed Riemannian manifolds $M$ with $|\mathrm{Ric}(M)| \le 1$, $\mathrm{diam}(M) \le D$ and $\mathrm{Vol}(M) \ge v$.
Note that though the following is stated in terms of the heat kernel, we can replace ``heat kernel'' by ``eigenfunctions'' because of Proposition \ref{prop:equiv}.
\begin{theorem}[Quantitative Gel'fand's inverse problem under bounded Ricci curvature]\label{thm:function}
For all $n \in \mathbb{N}$ and $\epsilon, r, v, D \in (0, \infty)$, 
if for two closed Riemannian manifolds $M_i \in \mathcal{M}(n, D, v)$, $i=1,2$, there exists an $\epsilon$-approximation of the heat kernel $\psi:B_r(x_1) \to B_r(x_2)$ for some $x_i \in M_i$, 
then
\begin{equation}
\dist_{\mathrm{GH}}(M_1, M_2)<\varepsilon_{n, r, v, D}(\epsilon).
\end{equation}
Furthermore, $\psi$ is obtained as the restriction of an $\varepsilon_{n, r, v, D}(\epsilon)$-mGH approximation $\Psi: M_1 \to M_2$ to $B_r(x_1)$ with the almost uniqueness property. 
\end{theorem}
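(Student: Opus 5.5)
The proof will go by contradiction and compactness, using Theorem \ref{uniqueness-bijection} to identify the limits. Suppose the conclusion fails. Then there are $\tau>0$, $\epsilon_i\to 0^+$, manifolds $M_{1,i},M_{2,i}\in\mathcal{M}(n,D,v)$ and $\epsilon_i$-approximations of the heat kernel $\psi_i:B_r(x_{1,i})\to B_r(x_{2,i})$ such that one of two things happens. Either $\dist_{\mathrm{GH}}(M_{1,i},M_{2,i})\ge\tau$, or every extension of $\psi_i$ fails to be a $\tau$-mGH approximation, or almost uniqueness fails at scale $\tau$. We normalize the measures to be $\mathrm{Vol}$, which are $\RCD(-(n-1),n)$ non-collapsed spaces. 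Bishop--Gromov gives $\mathrm{Vol}\le C(n,D)$, so (\ref{eq:diam}) holds for a suitable $D'$. By Gromov compactness, and by Cheeger--Colding volume convergence in the non-collapsed setting, after passing to a subsequence we get $M_{j,i}\to X_j$ in the mGH sense. Here each $X_j$ is a non-collapsed Ricci limit space with two-sided Ricci bounds, with $\meas_{X_j}=\mathcal{H}^n$. By \cite{CC1} and \cite[Th. 1.5]{CN12}, each $X_j$ is in the class of Theorem \ref{thm:unique almost}: its regular set $\mathcal{R}$ is a $C^{1,\alpha}$-Riemannian manifold with constant density, it has full measure, and it is convex.

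Next we construct the limit map. Following Step 3 of the proof of Proposition \ref{prop:weak}(1), we take a countable dense set in $B_r(x_1)$ and run a diagonal argument. The ratio bound (\ref{eq:heat comparison}) on $[\epsilon_i,1]$ passes to the limit through (\ref{eq:heat conv}). The uniform Gaussian bounds of Theorem \ref{thm:gaussian} keep the ratio under control for $t\in[\epsilon_i,1]$. The almost-bilipschitz estimate (\ref{eq:almost bilip}) yields equicontinuity. The result is a distance-preserving map $\psi:B_r(x_1)\to X_2$ with $p_{X_1}(x,y,t)=p_{X_2}(\psi(x),\psi(y),t)$ for $t\in(0,1]$ and $x,y\in B_r(x_1)$. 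Analyticity in $t$ (as in Step 1 of Proposition \ref{prop:weak}) extends this identity to all $t>0$.

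We then apply Theorem \ref{uniqueness-bijection} with $V_1=B_r(x_1)$ and $V_2=X_2$; Lemma \ref{lem:surjective} applies because regular points of non-collapsed spaces are interior points of $\mathcal{R}_{n,\epsilon}$. This gives a measure-preserving isometry $\Psi:X_1\to X_2$ with $\Psi|_{B_r(x_1)}=\psi$. Composing $\Psi$ with the GH approximations $M_{1,i}\to X_1$ and $X_2\to M_{2,i}$ yields $\delta_i$-mGH approximations $\Psi_i:M_{1,i}\to M_{2,i}$ with $\delta_i\to0$. Their restrictions to $B_r(x_{1,i})$ are uniformly close to $\psi_i$, and after modifying $\Psi_i$ on the ball they agree with $\psi_i$ exactly. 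This contradicts $\dist_{\mathrm{GH}}\ge\tau$ and the failure of extension.

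Almost uniqueness is handled the same way. Suppose two $\epsilon_i$-mGH extensions $\Psi_i,\tilde\Psi_i$ of $\psi_i$ satisfy $\dist(\Psi_i(z_i),\tilde\Psi_i(z_i))\ge\tau$. Both converge (Arzelà--Ascoli for approximate isometries) to isometries $X_1\to X_2$ that extend $\psi$. By the uniqueness part of Theorem \ref{uniqueness-bijection}, which rests on injectivity of $x\mapsto r_x|_{\psi(V_1)}$, the two limits coincide, and this is a contradiction.

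The main obstacle is passing the heat-kernel hypothesis to the limit and checking that the limit spaces meet every hypothesis of Theorem \ref{thm:unique almost}, in particular the $C^1$ regular set and its (weak) convexity. A second delicate point is that $\psi$ need not map onto an open set a priori. I would handle this with Lemma \ref{lem:surjective} at regular points, and then use Proposition \ref{isometry-homeo} to conclude that $\psi(B_r(x_1))$ is open.
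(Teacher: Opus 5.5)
Your proposal is correct and follows the same route as the paper. Both argue by contradiction and compactness: one obtains non-collapsed limits $X_1,X_2$ with two-sided Ricci bounds and a limit map on $B_r(x_1)$ that preserves the heat kernels, and this is then ruled out by Theorem~\ref{uniqueness-bijection} together with the $C^{1,\alpha}$-regularity of the regular set from \cite{CC1}. You in fact supply steps the paper leaves implicit: extending the identity from $t\in(0,1]$ to all $t>0$ by analyticity, weak convexity of $\mathcal{R}$ via \cite{CN12}, and deriving the extension and almost-uniqueness statements from the existence and uniqueness parts of Theorem~\ref{uniqueness-bijection}.
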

\begin{proof}
Let us prove the first statement only because the remaining ones are also a direct consequence of the proof with Theorem \ref{uniqueness-bijection}. 
If it is not the case, then, together with the continuity of the heat kernels with respect to the pmGH convergence (\ref{eq:heat conv}), we can find
\begin{itemize}
\item sequences of closed Riemannian manifolds $\{M_{j, k}\}_k$ of dimension $n$, $j=1,2$, with uniform two-sided Ricci curvature bounds, have non-collapsed Gromov-Hausdorff limit spaces $(X_j, \dist_{X_j})$ which are not isometric to each other;
\item there exists an isometry $\psi:B_r(x_1) \to B_r(x_2)$ for some $x_i \in X_i$ such that $\psi$ preserves the heat kernels for all small time.
\end{itemize}
Then we can get a contradiction via Theorem \ref{uniqueness-bijection}, together with the $C^{1,\alpha}$-regularity result on the regular sets $\mathcal{R}_{X_j, n}$ obtained in \cite{CC1}.
\end{proof}
Note that the theorem above can be generalized to the compactification of $\mathcal{M}(n, D, v)$, denoted by $\overline{\mathcal{M}(n, D, v)}$, by the proof.
Finally let us discuss the case under lower Ricci curvature bound, where we note that the proof of the following does not use the unique continuation for almost smooth spaces (namely it is enough to use it for closed manifolds). It is worth mentioning that this is not true if $M$ is replaced by an element of $\overline{\mathcal{M}}$ because a topological change occurs even for non-collapsed Einstein $4$-manifolds, see for instance \cite{KT, Page}.

\begin{theorem}[Canonical diffeomorphism via Gel'fand's inverse problem]\label{thm:diff1}
Let $M$ be a closed Riemannian manifold of dimension $n$.
For all $K \in \mathbb{R}, D>0$ and $\epsilon>0$, there exists $\delta>0$ such that the following holds. If an $\RCD(K, n)$ space $X$ with $\mathrm{diam}(X) \le D$ has 
a $\delta$-approximation of the heat kernel $\psi:(B_r(x), x) \to (B_r(y), y)$ for some $x \in X$ and some $y \in M^n$, then  $X$ is non-collapsed up to multiplication of a positive constant to the reference measure, and there exists an $\epsilon$-GH approximation $\Psi:X \to M$, which is a homeomorphism, such that 
\begin{equation}
\dist_{M}(\psi(z), \Psi (z))<\epsilon, \quad \text{for all $z \in B_r(x)$},
\end{equation}
and that
\begin{equation}
    (1-\epsilon)\dist_{X}(\tilde x, \tilde y)^{1+\epsilon} \le \dist_{M}(\Psi(\tilde x), \Psi(\tilde y)) \le (1+\epsilon)\dist_{X}(\tilde x, \tilde y)
\end{equation}
holds for all $\tilde x, \tilde y \in X$. Moreover, if $X$ is smooth, then $\Psi$ can be obtained as a diffeomorphism.
 \end{theorem}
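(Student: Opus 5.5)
The argument is by contradiction and compactness, as in Corollary \ref{cor:global rigidity} and Theorem \ref{thm:function}. Fix $M$, $K$, $D$, $\epsilon$. Suppose the conclusion fails for every $\delta_i\to 0$. Then there are $\RCD(K,n)$ spaces $X_i$ with $\mathrm{diam}(X_i)\le D$, points $x_i\in X_i$, $y_i\in M$, and $\delta_i$-approximations of the heat kernel $\psi_i:B_r(x_i)\to B_r(y_i)$ for which no $\Psi$ as in the statement exists. By the Gaussian estimate, Theorem \ref{thm:gaussian}, the on-diagonal heat kernel bounds at time $t\in[\delta_i,1]$ are transferred to $X_i$. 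So $\meas_{X_i}(B_{\sqrt t}(\bar x))$ is comparable to $\mathrm{Vol}_M(B_{\sqrt t}(\psi_i(\bar x)))\sim t^{n/2}$ for $\bar x\in B_r(x_i)$. After normalizing $\meas_{X_i}$ by a constant, the measures are uniformly two-sided bounded. We can therefore pass to an mGH limit $X_i\to X$ with $y_i\to y$. As in Step 3 of Proposition \ref{prop:weak}, a limit map $\psi:B_r(x)\to B_r(y)\subset M$ exists; it preserves the heat kernel and is a surjective isometry. Lemma \ref{lemma-map-isometry-general} then makes it a measure-preserving isometry.

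\textbf{Step 1: identify the limit.} This is where we use the unique continuation on the smooth side. Since $M$ is a closed smooth manifold, Theorem \ref{uniqueness-bijection} applies with the roles reversed: take $X_1=M$, $V_1=B_r(y)$ and map $\psi^{-1}$. Its hypotheses (the $C^1$-regular set and weak convexity) are needed only on the space where the Boundary Control method is run. I would check that Lemmas \ref{lem:heat eigen}, \ref{prop-control} and \ref{prop-distance} and Proposition \ref{isometry-homeo} only need unique continuation on $M$ plus non-branching (for injectivity of $x\mapsto r_x$) on $X$. This is the point the paper flags, so I expect it to go through. It yields a measure-preserving isometry $X\cong M$ extending $\psi$, up to the scalar normalization of the measure.

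\textbf{Step 2: non-collapsing and the homeomorphism.} From Step 1, $X_i$ mGH-converges to the smooth $n$-manifold $M$ with measure $c\,\mathcal{H}^n$. Since $N=n$ equals the essential dimension of the limit, and volume converges under non-collapsing (the \cite{DG} theory), $X_i$ has essential dimension $n$ for large $i$. Hence it is non-collapsed up to a constant, by the characterization recalled after Theorem \ref{thm:rcd structure}. Next we need a homeomorphism that is $\epsilon$-GH close to the limit isometry and satisfies the bi-H\"older-type bound. For this I would invoke the Reifenberg-type intrinsic homeomorphism theorem of Cheeger--Colding, in its non-collapsed $\RCD$ version (Kapovitch--Mondino / Bru\'e--Naber--Semola style): an $\RCD(K,n)$ non-collapsed space that is GH-close at every scale $\le r_0$ to a smooth closed manifold admits an $\epsilon$-GH approximating bi-H\"older homeomorphism. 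The closeness at all small scales follows because $M$ has a uniform scale on which it is almost Euclidean, and volume almost-rigidity propagates that to $X_i$. Composing with the isometry from Step 1 gives $\Psi_i$. Closeness to $\psi_i$ on $B_r(x_i)$ holds since both are close to the limit $\psi$. Almost uniqueness follows from the uniqueness part of Theorem \ref{uniqueness-bijection}. In the smooth case, I would replace Reifenberg by Cheeger--Colding's diffeomorphism stability for manifolds with $\mathrm{Ric}\ge K$ close to a fixed closed manifold. Together these contradict the choice of $X_i$.

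\textbf{Main obstacle.} The hardest step is the Reifenberg homeomorphism for non-collapsed $\RCD$ spaces. The exact H\"older exponent $1+\epsilon$ on the lower bound must come from the quantitative Reifenberg estimate. I would first try to cite it directly, and otherwise deduce it from the Cheeger--Colding transformation argument applied scale by scale.
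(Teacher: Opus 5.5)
Your architecture (contradiction and compactness, identifying the limit with $M$ through Theorem \ref{uniqueness-bijection}, then a topological stability theorem) is the one the paper intends. However, the reduction in Step 1 does not hold for the reason you give.

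Theorem \ref{uniqueness-bijection} is two-sided as proved. Lemma \ref{lem:heat eigen} needs unique continuation on \emph{each} space, to exclude eigenfunctions vanishing on $V$. Proposition \ref{isometry-homeo} obtains $R_1(X_1)=R_2(X_2)$ by running Lemma \ref{prop-distance}, hence the approximate controllability of Lemma \ref{prop-control}, on \emph{both} spaces. The identity \eqref{eq-integral-eigen} in Proposition \ref{isometry-eigen} likewise uses Lemma \ref{prop-control} on both. Non-branching on $X$ only gives injectivity of $x\mapsto r_x$.

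With unique continuation on $M$ alone, here is what the data give you:
\begin{itemize}
\item For each $\lambda$, $\sum_{\lambda_k=\lambda}\phi^X_k(a)\phi^X_k(b)=\sum_{\lambda_k=\lambda}\phi^M_k(\psi(a))\phi^M_k(\psi(b))$ on $V\times V$, so $X$ may carry extra eigenfunctions vanishing on $V$.
\item Through \eqref{eq-Blag} and finite propagation on $X$, only the inequality $\meas_M(M(\psi(U),s))=\|P_{U,s}1\|^2\le\meas_X(X(U,s))$. Here $P_{U,s}$ is the orthogonal projection of $L^2(X)$ onto the closure of $\{u^f(\cdot,s): f\in C_c(U\times(0,s))\}$.
\end{itemize}
Equality would require $\chi_{X(U,s)}$ to lie in that closure, which is exactly approximate controllability on $X$. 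The slicing arguments that build $\Psi$ and match eigenfunctions need equalities $\meas_X(I_1)=\meas_M(I_2)$, so they cannot be run.

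The limit $X$ is only a non-collapsed $\RCD(K,n)$ space, whose regular set need not be open, let alone $C^1$. So Theorem \ref{theorem-uc-general} is not available on it. You need an additional argument that rules out such an invisible part of $X$. The paper only remarks that unique continuation on closed manifolds suffices, without spelling out how. This is the real crux of Step 1, not a routine check.

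In Step 2 the paper does not use a generic Reifenberg theorem; it uses \cite{HP}. For an $\RCD(K,n)$ space GH-close to the fixed $M$, that result gives non-collapsing and an $\epsilon$-GH homeomorphism with exactly
$(1-\epsilon)\dist_X(\tilde x,\tilde y)^{1+\epsilon}\le \dist_M(\Psi(\tilde x),\Psi(\tilde y))\le(1+\epsilon)\dist_X(\tilde x,\tilde y)$.
It also gives a diffeomorphism when $X$ is smooth.

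A Cheeger--Colding/Kapovitch--Mondino Reifenberg homeomorphism loses a factor $1+C\delta$ at every scale. It is therefore only bi-H\"older in both directions, and the Lipschitz upper bound $(1+\epsilon)\dist_X$ does not follow from it. In \cite{HP} that bound comes from a canonical map built from heat kernels and eigenfunctions, with gradient control. Your proposal discusses only the exponent in the lower bound, so as it stands it proves a weaker estimate.

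Your non-collapsing step is also circular as written: the volume convergence of \cite{DG} presupposes $\meas_{X_i}=\mathcal{H}^n$. Use instead lower semicontinuity of the essential dimension under mGH convergence (Kitabeppu), together with the characterization recalled after Theorem \ref{thm:rcd structure}.
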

\begin{proof}
The proof is essentially same to that of Theorem \ref{thm:function}, noticing a canonical diffeomorphism result is established in \cite{HP}.
\end{proof}
Note that in the last two theorems above, we can replace ``$\delta$-approximation of the heat kernel'' by a weaker one, (\ref{eq:heat comparison}), (with $\epsilon=\delta$), due to Lemma \ref{lem:surjective}, and that in the final theorem, not only the almost uniqueness holds, but also it is homotopically unique because for two such extensions $\Psi_i\,(i=1,2)$ of $\psi$, $\Psi_1 \circ \Psi_2^{-1}$ is $C^0$-close to an isometry of $M$.

\medskip

\end{document}